\def\@tocline#1#2#3#4#5#6#7{\relax
  \ifnum #1>\c@tocdepth 
  \else
    \par \addpenalty\@secpenalty\addvspace{#2}%
    \begingroup \hyphenpenalty\@M
    \@ifempty{#4}{%
      \@tempdima\csname r@tocindent\number#1\endcsname\relax
    }{%
      \@tempdima#4\relax
    }%
    \parindent\z@ \leftskip#3\relax \advance\leftskip\@tempdima\relax
    \rightskip\@pnumwidth plus4em \parfillskip-\@pnumwidth
    #5\leavevmode\hskip-\@tempdima
      \ifcase #1
      \or\or \hskip 2em \or \hskip 2em \else \hskip 3em \fi%
      #6\nobreak\relax
    \dotfill\hbox to\@pnumwidth{\@tocpagenum{#7}}\par
    \nobreak
    \endgroup
  \fi}
\newcommand{\myitem}[1]{%
\item[#1]\protected@edef\@currentlabel{#1}%
}
\newcommand{\eq}[2]{\begin{equation}\label{#1}#2 \end{equation}}
\newcommand{\ml}[2]{\begin{multline}\label{#1}#2 \end{multline}}
\newcommand{\mlnl}[1]{\begin{multline*}#1 \end{multline*}}
\newcommand{\arir}{\ar@{^{(}->}}
\newcommand{\aril}{\ar@{_{(}->}}
\newcommand{\are}{\ar@{>>}}
\newcommand{\xr}[1] {\xrightarrow{#1}}
\newcommand{\lra}{\longrightarrow}
\newtheorem{lem}{Lemma}[section]
\newtheorem{thm}[lem]{Theorem}
\newtheorem{prop}[lem]{Proposition}
\newtheorem{cor}[lem]{Corollary}
\theoremstyle{definition}
\newtheorem{defn}[lem]{Definition}
\newtheorem{defn-prop}[lem]{Definition-Proposition}
\newtheorem{nota}[lem]{Notation}
\newtheorem{para}[lem]{}
\newtheorem*{acknowledgement}{Acknowledgement}
\theoremstyle{remark}
\newtheorem{rmk}[lem]{Remark}
\newtheorem{exs}[lem]{Examples}
\newtheorem{exs-rmks}[lem]{Examples and Remarks}
\newtheorem{claim}{Claim}[lem]
\newtheorem*{claim*}{Claim}
\newcounter{zaehler} 
\numberwithin{equation}{lem}
\newcommand{\N}{\mathbb{N}}
\newcommand{\Q}{\mathbb{Q}}
\newcommand{\Z}{\mathbb{Z}}
\renewcommand{\P}{\mathbf{P}}
\newcommand{\A}{\mathbf{A}}
\newcommand{\G}{\mathbf{G}}
\newcommand{\sL}{\mathcal{L}}
\newcommand{\sO}{\mathcal{O}}
\newcommand{\sU}{\mathcal{U}}
\newcommand{\sV}{\mathcal{V}}
\newcommand{\sX}{\mathcal{X}}
\newcommand{\sY}{\mathcal{Y}}
\newcommand{\sZ}{\mathcal{Z}}
\newcommand{\fm}{\mathfrak{m}}
\newcommand{\fp}{\mathfrak{p}}
\newcommand{\Xb}{{\overline{X}}}
\newcommand{\Db}{{\overline{D}}}
\newcommand{\tF}{{\widetilde{F}}}
\newcommand{\tX}{{\widetilde{X}}}
\newcommand{\ux}{{\underline{x}}}
\newcommand{\uy}{{\underline{y}}}
\newcommand{\Cor}{\operatorname{\mathbf{Cor}}}
\newcommand{\HI}{\operatorname{\mathbf{HI}}}
\newcommand{\RSC}{{\operatorname{\mathbf{RSC}}}}
\newcommand{\RSCNis}{{\operatorname{\mathbf{RSC}}}_{\Nis}}
\newcommand{\ul}[1]{{\underline{#1}}}
\newcommand{\PST}{{\operatorname{\mathbf{PST}}}}
\newcommand{\NST}{\operatorname{\mathbf{NST}}}
\newcommand{\Hom}{\operatorname{Hom}}
\newcommand{\uHom}{\operatorname{\underline{Hom}}}
\newcommand{\Ker}{\operatorname{Ker}}
\renewcommand{\Im}{\operatorname{Im}}
\newcommand{\Coker}{\operatorname{Coker}}
\newcommand{\Div}{\operatorname{Div}}
\newcommand{\Spec}{\operatorname{Spec}}
\newcommand{\Sw}{\operatorname{Sw}}
\newcommand{\Art}{\operatorname{Art}}
\newcommand{\td}{\operatorname{trdeg}}
\newcommand{\ab}{{\rm ab}}
\renewcommand{\sp}{{\rm sp}}
\newcommand{\dlog}{\operatorname{dlog}}
\newcommand{\Sm}{\operatorname{\mathbf{Sm}}}
\newcommand{\tr}{{\operatorname{tr}}}
\newcommand{\Ztr}{{\operatorname{\mathbb{Z}_{\tr}}}}
\newcommand{\red}{{\operatorname{red}}}
\newcommand{\Nis}{{\operatorname{Nis}}}
\newcommand{\et}{{\operatorname{\acute{e}t}}}
\newcommand{\inj}{\hookrightarrow}
\newcommand{\surj}{\rightarrow\!\!\!\!\!\rightarrow}
\newcommand{\id}{{\operatorname{id}}}
\newcommand{\codim}{{\operatorname{codim}}}
\newcommand{\ch}{{\operatorname{ch}}}
\newcommand{\CH}{{\operatorname{CH}}}
\newcommand{\Frac}{{\operatorname{Frac}}}
\newcommand{\e}{{\epsilon}}
\renewcommand{\b}{{\rm b}}
\renewcommand{\c}{{\rm c}}
\newcommand{\mc}{{\rm mc}}
\newcommand{\gen}{{\rm gen}}
\newcommand{\colim}{\operatornamewithlimits{\varinjlim}}
\newcommand{\ol}{\overline}
\renewcommand{\epsilon}{\varepsilon}
\renewcommand{\div}{\operatorname{div}}
\renewcommand{\sp}{{\rm sp}}
\newcommand{\la}{\langle}
\newcommand{\ra}{\rangle}
\newcommand{\MNST}{\operatorname{\mathbf{MNST}}}
\newcommand{\MCor}{\operatorname{\mathbf{MCor}}}
\newcommand{\MPST}{\operatorname{\mathbf{MPST}}}
\newcommand{\CI}{\operatorname{\mathbf{CI}}}
\newcommand{\CItspNis}{\CI^{\tau,sp}_{\Nis}}
\newcommand{\bcube}{{\ol{\square}}}
\newcommand{\ulMPST}{\operatorname{\mathbf{\underline{M}PST}}}
\newcommand{\uMPST}{\operatorname{\mathbf{\underline{M}PST}}}
\newcommand{\uMNST}{\operatorname{\mathbf{\underline{M}NST}}}
\newcommand{\uMCor}{\operatorname{\mathbf{\underline{M}Cor}}}
\newcommand{\uomega}{\underline{\omega}}
\newcommand{\ulomegaCI}{\underline{\omega}^{\CI}}
\def\rmapo#1{\overset{#1}{\longrightarrow}}
\def\PXD{P^{(D)}_X}
\def\Fh{F^h}
\def\sOh{\sO^h}
\def\isom{\overset{\sim}{\longrightarrow}}
\def\Art{\mathrm{Art}}
\def\Sw{\mathrm{Sw}}
\def\rmapo#1{\overset{#1}{\longrightarrow}}
\def\PXD{P^{(D)}_X}
\title[Zariski-Nagta purity for reciprocity sheaves]{Ramification theory of reciprocity sheaves, I\\ Zariski-Nagata purity}
\author{Kay R\"ulling \and Shuji Saito}
\address{Bergische Universit\"at Wuppertal\\ Gau\ss str. 20, D-42119 Wuppertal, Germany}
\email{ruelling@uni-wuppertal.de}
\address{Graduate School of Mathematical Sciences, University of Tokyo, 3-8-1 Komaba, Tokyo 153-8941, Japan}
\email{sshuji@msb.biglobe.ne.jp}
\thanks{K.R.\ was supported by the DFG Heisenberg Grant RU 1412/2-2. 
S.S.\ is supported by the JSPS KAKENHI Grant (20H01791). }
\begin{document}
\begin{abstract}
We prove a Zariski-Nagata purity theorem for the motivic ramification filtration of a reciprocity sheaf.
An important tool in the proof is a generalization of the Kato-Saito reciprocity map from geometric global
class field theory to all reciprocity sheaves. As a corollary we obtain cut-by-curves and cut-by-surfaces
criteria for various ramification filtrations. In some cases this reproves known theorems, 
in some cases we obtain new results.
\end{abstract}
\maketitle

\tableofcontents

\def\Keta{K_\eta}

\section{Introduction}
In this paper we prove a Zariski-Nagata purity theorem for the motivic ramification filtration of a reciprocity sheaf,
using a generalization of the Kato-Saito reciprocity map \cite{KS-GCFT} to all reciprocity sheaves.
As a corollary we obtain cut-by-curves and cut-by-surfaces criteria for various ramification filtrations. 
\para\label{H1}
Let $X$ be a smooth scheme over a perfect field $k$ and let $D$ be an effective Cartier divisor on $X$ such that its support 
has simple normal crossings and denote by  $U=X\setminus D$ its complement.
Denote by
\eq{intro;H1}{
H^1(U):=H^1(U_{\et},\Q/\Z)=\Hom_{\rm cts}(\pi_1^{\ab}(U),\Q/\Z),}
the group of torsion characters of the  abelianized fundamental group $\pi_1^{\ab}(U)$ of $U$.
There are  three different ways to express that  a character $\chi\in H^1(U)$ has
ramification bounded  by $D$:
\begin{enumerate}[label=(\roman*)]
\item\label{intro:cut-by-curve1} 
For any generic point $\eta\in D$, we have $\Art_{\Keta}(\chi_{|\Keta})\leq v_\eta(D)$,
where $\Keta=\Frac(\sO_{X,\eta}^h)$ is the quotient field of the henselization of $X$ at $\eta$, 
$v_\eta(D)$ is the multiplicity of $\eta$ in $D$,
and $\Art_{\Keta}$ is the Kato-Matsuda Artin conductor for characters of the Galois group of 
$\Keta$. (In the notation of \cite[Definition 3.2.5]{Matsuda}, $\Art_{\Keta}=1+\Sw'_{\Keta}$).
\item\label{intro:cut-by-curve2} 
For any $k$-morphism $h:C\to X$ with $C$ a smooth curve, such that $h^{-1}(U)$ is dense in $C$, and any
closed point $x\in h^{-1}(D)$, we have
$\Art_{L_x}(\chi_{|L_x})\leq v_x(h^*D)$,
where $L_x=\Frac(\sO_{C,x}^h)$ and $v_x(h^*D)$ is the multiplicity of $x$ in the pullback $h^*D$.
\footnote{Note that $\Art_{L_x}$ is the classical Artin conductor since the residue field of $L_x$ is perfect.}
\item\label{intro:cut-by-curve3} 
Let $\PXD$ 
be the blow-up of $X\times X$ in the center $\Delta_X\cap (D\times D)$ with the strict transforms of 
$X\times D$ and $D\times X$ removed, where $\Delta_X$ denotes the diagonal.
Note that we have a natural open immersion $U\times U\hookrightarrow \PXD$. 
Then, 
\[p_1^*(\chi)-p_2^*(\chi)\in \Im\big(H^1(\PXD) \inj  H^1(U\times U)\big),\]
where $p_i:U\times U\to U$ denotes the projection to the $i$th factor.
\end{enumerate}

It is known that the above conditions are equivalent:
the equivalence of (i) and (ii) follows from the ramification theory developed in \cite{KatoSwan} and \cite{Matsuda} (see \cite[Corollary 2.8]{Kerz-Saito}).
The condition  \ref{intro:cut-by-curve3} is introduced in Abbes-Saito's non-logarithmic ramification theory (\cite[Section 2, Subsection 3.1]{TSaito}) and
the equivalence of (i) and (iii) follows from \cite[Proposition 8.8]{AS}, \cite[Proposition 2.27]{TSaito} and \cite[Theorem 0.1]{yat}.


One aim of this  paper (and its sequel \cite{RS-AS}) is to  generalize the 
statements above to all reciprocity sheaves and prove that they are still equivalent, see Theorem \ref{intro:thm1} below.

\para\label{reviewRSC}
We recall the notion of reciprocity sheaves introduced by Kahn, Saito and Yamazaki,
which is closely related to the theory of modulus sheaves with transfers developed by the same authors in 
joint work with Miyazaki, see \cite{KSY1}, \cite{KSY2}, \cite{KMSY1}, and \cite{KMSY2}.
Fix a perfect field $k$.  Let $\Cor$ be the  category whose objects are
the smooth separated $k$-schemes and finite correspondences as morphisms. 
Let $\PST$ be the category of presheaves with transfers, i.e.,  the presheaves of abelian groups on $\Cor$.
A reciprocity presheaf $F$ is such an object of $\PST$ that any section
$a\in F(U)$ has 
{\em a modulus}: this condition is a generalization of Voevodsky's $\A^1$-invariance and is better understood using
the theory of modulus presheaves with transfers, which we now recall. A {\em modulus pair} $(X, D)$ 
consists of a separated scheme $X$ of finite type over $k$ and a (possibly empty) effective Cartier divisor $D$ on $X$ 
such that the complement $U=X\setminus D$ is smooth. The category of modulus pairs has as
morphisms the finite correspondences between the smooth complements satisfying a certain admissibility condition 
with respect to the divisors. A presheaf on the category of modulus pairs  is called a 
{\em modulus presheaf with transfers}, we denote by  $\uMPST$ the corresponding category. 
If  the modulus pair $(X,D)$ is proper, i.e., $X$ is proper over $k$, then we can associate to it a
particular presheaf with transfers, which is denoted by $h_0(X,D)$. It satisfies the following two properties, where we write $Z_K= Z\otimes_k K$ for a $k$-scheme $Z$ 
and a function field $K$ over $k$ and where we extend a presheaf with transfers to essentially smooth $k$-schemes   by taking colimits, see \eqref{para:RSC1.6}:
\begin{enumerate}[label=(\alph*)]
\item\label{intro:h01} $h_0(X,D) $ is a quotient of the representable presheaf with transfers $\Ztr(U)=\Cor(-,U)$, where $U=X\backslash D$, and 
\item\label{intro:h02} for any function field $K$ over $k$, 
the map $\Ztr(U)(K)\to h_0(X,D)(K)$ from \ref{intro:h01} induces an isomorphism
\[\CH_0(X_K, D_K)\xr{\simeq}h_0(X,D)(K) ,\]
via the identification of $\Ztr(U)(K)$ with $Z_0(U_K)$, the group of zero-cycles on $U_K$.
Here $\CH_0(X_K|D_K)$ is the Chow group of zero-cycles with modulus introduced in 
\cite{Kerz-Saito}, which is the quotient of $Z_0(U_K)$ by the subgroup generated by 
$\div_C(f)$, where $C$ is an integral curve in $U_K$ and $f$ is a
rational function on the normalization $\tilde{C}$ of the closure of $C$ in $X_K$ which satisfies 
$f\equiv 1 \text{ mod } D_{\widetilde{C}}$, i.e., at the points of $D_{\widetilde{C}}$ the function $f-1$ is  regular and 
contained in the ideal sheaf of $D_{\widetilde{C}}$, where $D_{\widetilde{C}}=D_K\times_{X_K} \widetilde{C}$.
\end{enumerate}
In fact the Zariski sheafification of $h_0(X,D)$ is uniquely determined by \ref{intro:h01} and \ref{intro:h02}.\footnote{This follows from the results in \cite{KSY2}  together with \cite[Theorem 3.3]{BS19}.}
A key role is played by the functor
\[
\ulomegaCI: \PST  \longrightarrow \ulMPST,
\]
given by
\[ \ulomegaCI F(X,D) := \colim_{(Y,E)}\Hom_{\PST}(h_0(Y,E),F),\quad 
\text{for } F\in \PST,
\]
where $(X,D)$ is a modulus pair and the colimit is  over the cofiltered ordered set of compactifications $(Y,E)$ of $(X,D)$, see \cite[1.8]{KMSY1}.
Note that by \ref{intro:h01} we have $\ulomegaCI F(X,D)\subset F(U)$.
A modulus for $a\in F(U)$ is a proper modulus pair $(X,D)$ as above such that $a$ is contained in 
$\ulomegaCI F(X,D)$. We arrive at the following definition. 

\begin{defn}\label{def;RSCNis}
Let $F\in \PST$ and  write $\tF:=\ulomegaCI F\in \ulMPST$.
Following \cite{KSY2}, we say that $F$ is a \emph{reciprocity sheaf} if for any smooth $k$-scheme $U$
the restriction $F_{U}$  is a sheaf on the small Nisnevich site of $U$ and any section 
$a\in F(U)$ has a modulus, i.e.,
\eq{def;RSCNis1}{F(U)=\bigcup_{(X,D)} \tF(X,D),}
where the union is over all proper modulus pairs $(X,D)$ with $U=X\setminus D$.
We  denote by $\RSCNis\subset\PST$ the full subcategory of reciprocity sheaves.

We say that a reciprocity sheaf $F$ has {\em level $n\ge 0$}, 
if for any smooth $k$-scheme $X$ and any $a\in F(\A^1\times X)$ the following implication holds:
\[a_{\A^1_z}\in F(z)\subset F(\A^1_z), \quad \text{for all } z\in X_{(\le n-1)}  \Longrightarrow a\in F(X)\subset F(\A^1\times X),\]
where $a_{\A^1_z}$ denotes the restriction of $a$ to $\A^1_z=\A^1\times z\subset \A^1\times X$,
$X_{(\le n-1)}$ denotes the set of points in $X$ whose closure has dimension $\le n-1$, and for a smooth scheme $S$ we identify
$F(S)$ with its image in $F(\A^1\times S)$ via pullback along the projection map.
\footnote{This is equivalent to  the motivic conductor of $F$ having level $n$ in the language of \cite{RS}.}
\end{defn}

Any $\A^1$-invariant Nisnevich sheaf with transfers is a reciprocity sheaf of level $0$.
A relevant reciprocity sheaf to the introduction is $F=H^1$ from \ref{H1} and it has level $1$. 
See \cite[Part 2]{RS}, \cite[\S11.1]{BRS}, and \ref{exs:ZNP} for other examples.

Heuristically, a presheaf with transfers $F$ satisfies \eqref{def;RSCNis1} if  any section $a\in F(U)$ has ``bounded ramification along boundaries of compactifications of $U$" and moreover $F$ is $\A^1$-invariant if any $a\in F(U)$ has ``tame ramification at boundaries".
A manifestation of this viewpoint has been given in \cite{RS}, 
where \emph{the motivic conductor} associated to a reciprocity sheaf $F$ is introduced.
It is a collection of maps 
\[c^F=\{c^F_L : F(L) \to \N\}_{L}\]
where $L$ runs through all  henselian discrete valuation fields of geometric type over $k$,
defined for $a\in F(L)$ by
\[ c_L^F(a) =\min\{n\in \N\mid a\in \tF(\Spec \sO_L,n\cdot s_L)\}, \]
where $s_L\in \Spec \sO_L$ is the closed point.
It is shown in \cite{RS} that $c^F$ recovers classically known conductors:
for $F=H^1$ in \ref{H1}, $c^F_L=\Art_L$ from (i);
for $F=  {\rm Conn}^1$, the group of rank $1$ connections, $c^F_L$ is the irregularity (up to a shift);
if $F$ is represented by a commutative algebraic group and $\td(L/k)=1$, $c^F_L$ is Rosenlicht-Serre's conductor.
This is why  for $F\in \RSCNis$ we call
\eq{intro;tF}{ \tF(X,D)\subset F(U),\quad  \text{with }(X,D)
\text{ proper modulus pairs with } X\setminus D=U,}
the \emph{motivic ramification filtration on $F(U)$}.
\medbreak

We say that \emph{resolutions of singularities hold over $k$ in dimension $\le n$} if for any integral projective $k$-scheme
$Z$ of dimension $\le n$ and any effective Cartier divisor $E$ on $Z$, there exists a proper birational morphism $h:Z'\to Z$
such that $Z'$ is regular and $|h^{-1}(E)|$ has simple normal crossings. 
This is known to hold if ${\rm char}(k)=0$ by Hironaka or if
$n\leq 3$ by \cite{Cossart-Piltant}.
The following is a consequence of the main result of this paper, see  Corollary \ref{cor:cut-by-curve}.

\begin{thm}\label{intro:thm1}
Let $\Xb$ be a smooth projective $k$-scheme and $\Db$ be an effective Cartier divisor on $\Xb$ whose support has
simple normal crossings. Let $X\subset \Xb$ be a non-empty open subscheme, $D=\Db\cap X$ and $U=X\setminus |D|$.
Let $F$ be a reciprocity sheaf of  level $n\ge 0$.  
Assume resolutions of singularities hold over $k$ in dimension $\le n$. For $\chi\in F(U)$, the following statements are equivalent:
\footnote{
In case $X=\ol{X}$, the implications $(i) \Leftrightarrow (iii) \Leftrightarrow (iv) \Rightarrow (ii)$ hold without assuming resolution of singularities in dimension $\le n$.}
\begin{enumerate}
\myitem{(i)}\label{intro:thm1i}
For any generic point $\eta\in D$, we have $c^F_{K_\eta}(\chi_{|\Keta})\leq v_\eta(D)$
(cf. \ref{H1}\ref{intro:cut-by-curve1}).
\myitem{(ii)}\label{intro:thm1ii}
For any $k$-morphism 
$h:Z\to X$ with $Z$ smooth quasi-projective of dimension $\le n$ such that the support of $D_Z=h^* D$ 
has simple normal crossings and for any generic point $x\in D_Z$, we have 
$c^F_{L_x}(\chi_{|L_x})\leq v_x(D_Z)$, where $L_x=\Frac(\sO^h_{Z,x})$ (cf. \ref{H1}\ref{intro:cut-by-curve2}).
\myitem{(iv)}\label{intro:thm1iv} $\chi\in \tF(X,D)$.
\end{enumerate}
\end{thm}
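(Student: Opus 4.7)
The plan is to prove the chain $(iv) \Rightarrow (ii) \Rightarrow (i) \Rightarrow (iv)$; the first two implications are structural, while the third is the Zariski--Nagata purity statement and is the heart of the matter.

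For $(iv) \Rightarrow (ii)$: a morphism $h \colon Z \to X$ with $|h^*D|$ having simple normal crossings extends to a morphism of appropriately chosen proper modulus pairs, and hence induces a pullback $\tF(X,D) \to \tF(Z, h^*D)$ compatible with the inclusions into $F(-)$. Applied to $\chi$ and then localized at a generic point $x$ of $h^*D$, this places $\chi|_{L_x}$ in $\tF(\Spec \sO_{L_x}, v_x(h^*D) \cdot s)$, which is, by the very definition of the motivic conductor, the statement $c^F_{L_x}(\chi|_{L_x}) \le v_x(h^*D)$. The implication $(iv) \Rightarrow (i)$ is the special case in which $h$ is a Zariski-open neighborhood of $\eta$, and uses neither the level hypothesis nor resolution. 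For $(ii) \Rightarrow (i)$, given a generic point $\eta$ of $D$, one slices $X$ by a smooth quasi-projective $Z$ of dimension $\le n$ meeting $\overline{\{\eta\}}$ transversally at a regular closed point $x$ with $v_x(h^*D) = v_\eta(D)$; the level-$n$ property of $F$ then forces $c^F_{K_\eta}(\chi|_{K_\eta}) = c^F_{L_x}(\chi|_{L_x})$, so that (ii) applied to this $h$ delivers (i).

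The core implication $(i) \Rightarrow (iv)$ is the Zariski--Nagata purity itself. The strategy is to exploit the generalized Kato--Saito reciprocity map for reciprocity sheaves that is constructed earlier in the paper. This reciprocity map should characterize membership in $\tF(X,D) \subset F(U)$ in purely codimension-one terms, generalizing property \ref{intro:h02} of $h_0(X,D)$ involving $\CH_0(X_K|D_K)$: namely, that $\chi$ lies in $\tF(X,D)$ if and only if it factors (after passing to $h_0$) through a cycle-theoretic object controlled by data at the generic points of $D$. One then translates the hypothesis (i) into the input required by the reciprocity map and reads off (iv). The level $n$ hypothesis, combined with resolution of singularities in dimension $\le n$, allows one to reduce all such factorizations to tests on smooth projective $k$-schemes of dimension $\le n$, where the cycle-theoretic machinery applies.

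The main obstacle --- and the step for which the generalized reciprocity map is designed --- is the gluing: assembling the local conductor bounds at the various generic points of $D$ into a single global modulus condition. In the classical case $F = H^1$ (of level $1$), this is the content of geometric global class field theory; the paper's contribution is to establish the analogous statement for all reciprocity sheaves. The footnote indicating that when $X = \Xb$ the proof goes through without resolution suggests that the proper case admits a direct reciprocity-map argument, while the non-proper case requires compactification along an SNC boundary (and hence resolution in dimension $\le n$) in order to bring the reciprocity map into play.
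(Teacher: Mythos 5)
Your proposed chain $(iv) \Rightarrow (ii) \Rightarrow (i) \Rightarrow (iv)$ has a genuine gap at the middle step. The argument you give for $(ii) \Rightarrow (i)$ --- slice $X$ by a smooth $Z$ of dimension $\le n$ meeting $\overline{\{\eta\}}$ transversally and invoke the level-$n$ hypothesis to force $c^F_{K_\eta}(\chi|_{K_\eta}) = c^F_{L_x}(\chi|_{L_x})$ --- does not follow from the definition of level. The level-$n$ condition (and the resulting \cite[Corollary 4.18]{RS}) says that membership of a section in $\tF(\ol{X}, E)$ can be tested against \emph{all} maps from henselian dvrs $\sO_L$ with ${\rm trdeg}(L/k)\le n$; it does not assert that for a single well-chosen slice the conductor at the slice equals the conductor at the generic point $\eta$ of $D$, where $K_\eta$ may have transcendence degree much larger than $n$. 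Establishing such an equality is precisely the ``cut-by-curves is tight'' assertion, which in the classical case $F = H^1$, $n = 1$ is the hard content of Kato--Matsuda ramification theory (cf.\ \cite[Corollary 2.8]{Kerz-Saito}); it is not an elementary consequence of the level hypothesis. The paper avoids this entirely: it never proves $(ii)\Rightarrow(i)$ directly. Instead, in Corollary \ref{cor:cut-by-curve} it proves $(iv)\Rightarrow(ii)$, $(iv)\Rightarrow(i)$ (both trivial), and then the two hard implications $(i)\Rightarrow(iv)$ and $(ii)\Rightarrow(iv)$ independently, so that $(ii)\Rightarrow(i)$ becomes a formal consequence of $(ii)\Rightarrow(iv)\Rightarrow(i)$.

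Two further remarks on the hard implications. For $(i)\Rightarrow(iv)$, your identification of the reciprocity pairing as the engine is correct: the paper's Theorem \ref{thm:ZNP} establishes $\tF(X,D) = F_\gen(X,D)$ for $X$ smooth projective with $|D|$ SNCD, via the pairing \eqref{intro;ZNP-pairing}. In the setting of Theorem \ref{intro:thm1} this gets applied to a compactification $(\ol{X},\ol{D}+NB)$ and then restricted. For $(ii)\Rightarrow(iv)$ --- which your chain does not explicitly address but which is what carries the weight --- the paper first picks $N$ with $\chi\in\tF(\ol{X}, N(\ol{D}+B))$, uses the level-$n$ property to reduce to verifying the modulus bound after pullback along any $\rho:\Spec L\to U$ with ${\rm trdeg}(L/k)\le n$, and \emph{then} uses resolution in dimension $\le n$ to produce a smooth projective $\ol{Z}$ dominating the image of $\Spec \sO_L$ with SNC boundary so that hypothesis $(ii)$ together with Theorem \ref{thm:ZNP} applies to $Z$. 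This is where resolution in dimension $\le n$ enters, not in bringing the reciprocity map into play as you suggest (the reciprocity pairing is applied in dimension $\dim X$, not $n$). So your sketch has the right ingredients but misplaces the role of the level/resolution hypotheses and omits the key reduction to low-transcendence-degree dvrs.
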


Building on the present article and \cite{RS-HLS},  we will show in  \cite{RS-AS} the equivalence of \ref{intro:thm1iv} and the following statement:
\begin{enumerate}
\myitem{(iii)} \label{intro:thm1iii}
Let  $\PXD$ and $p_1,p_2: U\times U\rightrightarrows U$ be as in \ref{H1}\ref{intro:cut-by-curve3}. Then,
\[p_1^*(\chi)-p_2^*(\chi)\in \Im\Big(F(\PXD) \rmapo{j^*}   F(U\times U)\Big).\]
\end{enumerate}
Note that the equivalence of \ref{intro:thm1i} and \ref{intro:thm1iv} proven in the present paper reads as the equality
\eq{intro:gen}{\tF(X,D)=F_\gen(X,D).}
Here by definition
\[F_\gen(X,D):=\Ker\left(F(U)\to \bigoplus_\eta
\frac{F(\Spec \sO_{X,\eta}^h\backslash\eta)}{\tF(\Spec \sO_{X,\eta}^h, D^h_\eta)}\right),\]
where  the direct sum is over the generic points $\eta$ of $D$ and  $D^h_\eta=D\times_X \Spec \sO_{X,\eta}^h$.
This can be  viewed as a Zariski-Nagata purity for the motivic ramification filtration.
If the support of $D$ is smooth, this also follows from \cite[Corollary 8.6(2)]{S-purity} and if $D$ is reduced, it follows
from \cite[Theorem 2.3]{SaitologRSC}. The general case treated here is new.
On the other hand, the equivalence of \ref{intro:thm1iv}  and \ref{intro:thm1ii}  in the case $n=1$ is viewed as a cut-by-curves criterion for 
the motivic ramification filtration.
See \ref{exs:ZNP} for examples where we obtain unconditional results, in particular the cut-by-surfaces criterion for the ramification of fppf-torsors under 
infinitesimal finite unipotent group schemes over $k$ is new. 
\def\XK{X_K}
\def\qz{\Q/\Z}

\para
It is usually very difficult to compute Chow groups with modulus used in the definition of the motivic ramification filtration (see the description of $h_0(X,D)$ especially
\ref{reviewRSC}\ref{intro:h02} above) so that it seems hopeless to compute the filtration plainly from its definition. On the other hand, condition \ref{intro:thm1iii} 
from above, which is considered in \cite{RS-AS}, is easy to check and it provides an effective method to compute the filtration.
In the proof of Theorem \ref{intro:thm1}, we use  another effective method for the computation, called \emph{reciprocity pairing}, 
see \eqref{intro;ZNP-pairing} and Theorem \ref{intro;ZNP} below.

Let $K$ be a function field over $k$ and let $X$ be a reduced and projective  $K$-scheme of pure dimension $d$.
We denote by $K^M_{r, X}$ ($r \ge 0$) the Nisnevich sheafification 
of the improved Milnor K-theory from \cite{Kerz}. For  a nowhere dense closed subscheme $D\subset X$, we consider the following Nisnevich sheaves for $r\ge 1$\footnote{See \ref{para:KM} for a comparison of $V_{r,X|D}$ with the sheaf $K^M_{r}(\sO_X, I_D)=\Ker(K^M_{r, X}\to K^M_{r, D})$ used in \cite[(1.3)]{KS-GCFT}, where $I_D\subset \sO_X$ is the ideal sheaf for $D$.}
\[V_{r,X|D}:=\Im(\sO_{X|D}^\times\otimes_{\Z}K^M_{r-1, X}\to K^M_{r,X})
\;\;\text{ with } \sO_{X|D}^\times:= \Ker(\sO_X^\times\to i_*\sO_D^\times).\]
Let $U\subset X$ be a regular dense open subscheme such that $U\cap D=\emptyset$.
For  a closed point $x\in U$, the Gersten resolution (\cite[Proposition 10(8)]{Kerz})  yields an isomorphism 
\[\theta_x: \Z\xr{\simeq} H^d_x(U_{\Nis}, K^M_{d, U})
\cong H^d_x(X_{\Nis}, V_{d, X|D}),\]
which induces a surjective map (cf. \cite[Theorem 2.5]{KS-GCFT}): 
\eq{thetaU}{ \theta_U=\sum_x \theta_x: Z_0(U)=\bigoplus_{x\in U} \Z
\surj H^d(X_{\Nis}, V_{d, X|D}),}
where $x$ runs through all closed points of $U$. Consider the pairing
\eq{intro;pairing}{
(-,-)_{U\subset X/K}:F(U) \otimes Z_0(U) \to F(K),\quad a\otimes  [x] \mapsto  (g_x)_*i_x^*(a),}
where $i_x:x\to U$ is the closed immersion and $(g_x)_*:F(x) \to F(K)$ is the transfer map for the finite map $g_x :x\to \Spec K$ induced by $X\to \Spec K$.
A key result used in
the proof of Theorem \ref{intro:thm1} is Proposition \ref{prop:mod-K-pairing}, stating  that 
\eqref{intro;pairing} induces a pairing
\eq{intro;ZNP-pairing}{(-,-)_{(X,D)/K}:F_\gen(X,D)\otimes H^d(X_\Nis, V_{d, X|D})\to F(K),}
where $F_\gen(X,D)$ is defined similarly as in \eqref{intro:gen}, in particular it depends only on the 
points of codimension at most $1$ in the normalization of $X$.
 
In Examples and Remarks \ref{rmk:CFT}, it is explained that the pairing \eqref{intro;pairing} recovers some classically known pairings:
in case $k$ is finite and $F(X)=H^1(X)$ from \ref{H1}, it recovers the reciprocity map constructed in \cite[(3.7)]{KS-GCFT}. In case ${\rm char}(k)=0$ and $F(X)$ is the group of isomorphism classes of absolute  rank one connections on $X$ relative to $k$,  this pairing 
is a higher-dimensional version of  the pairing constructed  in \cite[4.]{Bloch-Esnault} 
for $X$  a curve.
See also \ref{rmk:CFT}\ref{rmk:CFT3} for a pairing involving the non-$\A^1$-invariant part of \'etale motivic cohomology with mod-$p^n$ coefficients 
induced by the pairing \eqref{intro;ZNP-pairing}, which is reminiscent of the pairings constructed in \cite{JSZ} and \cite{Gupta-KrishnaII}, though in different cohomological degrees.

\begin{thm}[Theorem \ref{thm:ZNP}]\label{intro;ZNP}
Let $F$ be a reciprocity sheaf. Let $X$ be a smooth projective $k$-scheme of pure dimension $d$ and $D$ an effective Cartier divisor on $X$ whose 
support has simple normal crossings. Denote by $j: U=X\setminus D\inj X$ the open immersion.
For $a\in F(U)$, the following conditions are equivalent:
\begin{enumerate}[label=(\roman*)]
\item\label{intro:thm:ZNP1}
$a\in \tF(X,D)$;
\item\label{intro:thm:ZNP2}
$a\in F_{\gen}(X,D)$;
\item\label{intro:thm:ZNP3}
for any function field $K$ over $k$, the map
\[(a_K,-)_{U_K\subset X_K/K} : Z_0(U_K) \to F(K)\]
induced by the pairing \eqref{intro;pairing} factors through $H^d(X_{K,\Nis}, V_{d, X_K|D_K})$, where $a_K\in F(U_K)$ with $U_K=U\otimes_k K$ is the  pullback of $a$.
\end{enumerate}
\end{thm}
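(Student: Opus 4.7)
The plan is to establish the three implications in the cycle $(i) \Rightarrow (ii) \Rightarrow (iii) \Rightarrow (i)$.

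The implication $(i) \Rightarrow (ii)$ is formal: if $a \in \tF(X,D)$, then restriction along the morphism of modulus pairs $(\Spec \sO_{X,\eta}^h, D_\eta^h) \to (X,D)$ places $a$ in $\tF(\Spec \sO_{X,\eta}^h, D_\eta^h)$ at each generic point $\eta$ of $D$, so $a \in F_\gen(X,D)$ by the definition of the latter.

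The implication $(ii) \Rightarrow (iii)$ is an application of Proposition \ref{prop:mod-K-pairing} (the reciprocity pairing) after base change. For any function field $K/k$, the base-changed pair $(X_K,D_K)$ is again smooth projective with SNC boundary over $K$, and because $F_\gen$ is defined by a local condition at codimension-one points one checks that $a_K \in F_\gen(X_K,D_K)$. Applying Proposition \ref{prop:mod-K-pairing} over $K$ then yields that $(a_K,-)_{U_K \subset X_K/K}$ factors through $H^d(X_{K,\Nis}, V_{d,X_K|D_K})$, which is condition $(iii)$.

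The main work is $(iii) \Rightarrow (i)$. The strategy is to construct, from the pointwise factoring datum, a morphism $\varphi: h_0(X,D) \to F$ in $\PST$ whose composition with the canonical surjection $\Ztr(U) \twoheadrightarrow h_0(X,D)$ corresponds to $a \in F(U)$; such a $\varphi$ is, since $(X,D)$ is proper, an element of $\tF(X,D) = \Hom_{\PST}(h_0(X,D),F)$ that maps to $a$ under the natural map $\tF(X,D) \to F(U)$. Two ingredients go into producing $\varphi$: a Kato--Saito-type comparison identifying $H^d(X_{K,\Nis}, V_{d,X_K|D_K})$ with the Chow group of zero-cycles with modulus $\CH_0(X_K|D_K)$, which by property \ref{intro:h02} equals $h_0(X,D)(K)$; and the uniqueness statement cited in the excerpt that the Zariski sheafification of $h_0(X,D)$ is determined by \ref{intro:h01} and \ref{intro:h02} on function field points. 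Condition $(iii)$, functorial in $K$, provides a system of homomorphisms $h_0(X,D)(K) \to F(K)$ which, using the sheaf property of $F$ together with this uniqueness, assembles into the desired morphism $\varphi$ in $\PST$.

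The main obstacle I expect is the assembly step in $(iii) \Rightarrow (i)$: promoting the family of pointwise homomorphisms on function field values to a morphism in $\PST$ that is compatible with \emph{all} finite correspondences, not just with base change along function fields. This requires a delicate combination of the sheafification uniqueness for $h_0(X,D)$ and the Nisnevich sheaf axiom for $F$, and is the place where the hypothesis that $F$ is a reciprocity \emph{sheaf} (as opposed to a mere presheaf) is crucial. A secondary technical point is establishing the comparison $H^d(X_{\Nis}, V_{d,X|D}) \cong \CH_0(X|D)$ in sufficient generality to handle all function field base changes simultaneously, generalizing the corresponding comparison used in the geometric global class field theory of \cite{KS-GCFT}.
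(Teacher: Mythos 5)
Your implications $(i)\Rightarrow(ii)$ and $(ii)\Rightarrow(iii)$ are correct and follow essentially the paper's route; the paper isolates the base-change claim for $F_\gen$ as Lemma~\ref{lem:Fgen-fp} (flatness ensures codimension-one points go to codimension-$\le 1$ points).

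For $(iii)\Rightarrow(i)$ you have correctly located where a real argument is needed---promoting the factoring at function-field points to a morphism $h_0(X,D)\to F$ in $\PST$---but the tools you propose (``sheafification uniqueness for $h_0(X,D)$ plus the Nisnevich sheaf axiom for $F$'') are not what closes the gap, and leaving the assembly step as ``delicate'' is precisely where your proof stops short. The key input the paper uses is a semi-purity statement from \cite[Theorem 3.1]{S-purity}: for a connected $S\in\Sm$ with function field $K$, the restriction $F(S)\to F(K)$ is \emph{injective}. Since the quotient $q:\Ztr(U)\to h_0(X,D)$ is a morphism of presheaves, $\ker q(S)$ maps to $\ker q(K)$ under pullback, and the injectivity $F(S)\inj F(K)$ then reduces the factoring of $a(S)$ through $q(S)$ to the factoring of $a(K)$ through $q(K)$. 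This one observation makes the worry about ``compatibility with all finite correspondences'' evaporate: it suffices to treat the case $S=\Spec K$. Your proposal neither identifies this reduction nor supplies a substitute, so there is a genuine gap at this step.

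Two smaller points. First, once reduced to function fields, one still needs the compatibility $a(K)=(a_K,-)_{U_K\subset X_K/K}\circ\theta$, where $\theta:Z_0(U_K)\to H^d(X_{K,\Nis},j_!K^M_{d,U_K})$ is the cycle-to-cohomology map; this is Lemma~\ref{lem:pair-cor}, which shows the pushforward pairing recovers the transfer action $Z\mapsto Z^*a$. Your argument implicitly assumes such a compatibility without isolating it. Second, your ``secondary technical point'' asking for an \emph{isomorphism} $H^d(X_\Nis,V_{d,X|D})\cong\CH_0(X|D)$ overshoots what is actually needed: the paper only uses the surjective cycle class map $\CH_0(X_K|D_K)\surj H^d(X_{K,\Nis},V_{d,X_K|D_K})$ from Proposition~\ref{prop:cyle-map} together with the identification $h_0(X,D)(K)\cong\CH_0(X_K|D_K)$ of \cite[Theorem 3.3]{BS19}. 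Knowing that $a(K)$ factors through $H^d(V_{d,X_K|D_K})$ (this is condition~(iii)) and that the natural map $Z_0(U_K)\to H^d(V_{d,X_K|D_K})$ factors through $\CH_0(X_K|D_K)$ is enough to conclude that $a(K)$ factors through $\CH_0(X_K|D_K)\cong h_0(X,D)(K)$; no isomorphism in the other direction is required.
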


Note that the assumption on the smoothness of $X$ in the above theorem cannot be relaxed to just requiring the complement 
$X\setminus D$ to be smooth.
Indeed, in Theorem \ref{cor:rat-sing} we show as an application of the above that a normal Cohen-Macaulay scheme $Y$ which is of finite type over a field of characteristic zero has rational singularities if and only 
if there exists locally on $Y$ 
an effective Cartier divisor $E$  whose support contains the singular locus of $Y$, such that
the natural map $\widetilde{\Omega^d}(Y,E)\to \Omega^d_{\rm gen}(Y,E)$ is bijective. 
\medbreak

We sketch the strategy of the proof of Theorem \ref{intro;ZNP}, which implies  Theorem \ref{intro:thm1}. 
The implication $\ref{intro:thm:ZNP1} \Rightarrow \ref{intro:thm:ZNP2}$ is direct from the definition. 
The implication of $\ref{intro:thm:ZNP3} \Rightarrow \ref{intro:thm:ZNP1}$ is 
a formal consequence of \ref{reviewRSC}\ref{intro:h02} above, \cite[Theorem 3.1]{S-purity} and a commutative diagram
\[\xymatrix{
& \ar[ld]_{\pi} Z_0(U_K) \ar[rd]^{\theta_{U_K}} \\
\CH_0(X_K|D_K) \ar[rr]^-{cyc_{X_K|D_K}} && H^d(X_{K,\Nis}, V_{d, X_K|D_K})\\}\]
where $\pi$ is the quotient map, $\theta_{U_K}$ is the map \eqref{thetaU} and $cyc_{X_K|D_K}$ is a cycle class map constructed in \cite{RSCycleMap}, cf. Proposition \ref{prop:cyle-map}.

Thus the most essential point is the implication $\ref{intro:thm:ZNP2} \Rightarrow \ref{intro:thm:ZNP3}$
in Theorem \ref{intro;ZNP}.
This requires to construct the pairing \eqref{intro;ZNP-pairing}. 
To this end we extend the formalism of pushforward for cohomology of reciprocity sheaves from \cite{BRS} to that for some ad hoc version of compactly 
supported cohomology, which is done in section \ref{sec:pfcs}. 
Using this pushforward, we construct an approximation of the desired pairing, see
\eqref{para:pairing2} and Lemma \ref{lem:pair-cor}. 
To show that this pairing induces \eqref{intro;ZNP-pairing}, we use a description of  the cohomology group $H^d(X_\Nis, V_{d, X|D})$ in terms of the sections
of $V_{d, X|D}$ over henselizations of $X$ along Par\v{s}in chains from \cite{KS-GCFT}, which is recalled in section \ref{sec:parsin}. 
This reduces the problem to a purely local situation in which case it follows from
Theorem \ref{thm:cont}, which relies on a weak version of the Brylinski-Kato formula for reciprocity sheaves, see Corollary \ref{cor:BK} and
see Remark \ref{rmk:BK} for an explanation why this is called (weak) Brylinski-Kato formula\footnote{The full Brylinski-Kato formula for reciprocity sheaves 
will be  proven in \cite{RS-AS}}.  

\begin{acknowledgement}
The second named author thanks Ahmed Abbes, Takeshi Saito and Yuri Yatagawa for their kindly  answering questions on ramification theory. 
The authors thank the referee for his comments.
\end{acknowledgement}

\begin{nota}\label{nota}
For a noetherian ring $R$, ${\rm Frac}(R)$ denotes its total ring of fractions. 

In the following $k$ denotes a field and $\Sm$ the category of separated schemes 
which are smooth and of finite type over $k$. For $k$-schemes $X$ and $Y$ we write $X\times Y:= X\times_k Y$.
For $n\ge 0$ we write $\P^n:=\P^n_k$, $\A^n:=\A^n_k$.
We say a field $L$ is a {\em henselian dvr of geometric type over $k$}, if there exists $U\in \Sm$ and  $x\in U$ a 
1-codimensional point, such that $L={\rm Frac}(\sO_{U,x}^h)$; we denote the ring of integers of $L$ by $\sO_L$.

For a scheme $X$ we denote by $X_{(i)}$ (resp. $X^{(i)}$) 
the set of $i$-dimensional (resp. $i$-codimensional) points of $X$.
If $(R,\fm)$ is a local ring, then we denote by 
\[R\{x_1,\ldots, x_n\}\]
the henselization of  the polynomial ring $R[x_1,\ldots, x_n]$ at the ideal 
$\fm R[x_1,\ldots, x_n]+ (x_1,\ldots, x_n)$.

Let $F$ be a Nisnevich sheaf on a scheme $X$ and $x\in X$ a point.
Then we denote by $F_x$ its Zariski stalk and by $F_x^h=\varinjlim_{x\in U/X}F(U)$ the Nisnevich stalk, where the colimit
is over all Nisnevich neighborhoods $U\to X$ of $x$.

\end{nota}

\section{Preliminaries}
In this section $k$ is a perfect field.

\begin{para}\label{para:mod}
We recall some basic notions from \cite{KMSY1} and \cite{KMSY2}.
A {\em modulus pair} is a pair $\sX=(X,D)$, where $X$ is a separated scheme of  finite type over $k$, $D$ 
is an effective Cartier divisor on $X$ ($D=\emptyset$ is allowed) and $X\setminus|D|$ is smooth;  $\sX$ is said to be proper
if $X$ is. For modulus pairs $\sX=(X,D)$ and $\sY=(Y,E)$ we denote by $\uMCor(\sX, \sY)$ 
the free abelian group generated by
integral closed subschemes $V\subset X\setminus|D|\times Y\setminus|E|$  
which are finite and surjective over a connected component of $X\setminus|D|$ and such  that 
$\ol{V}^N\to X\times Y$ the  normalization of the closure of $V$ is proper over $X$  and satisfies
\eq{para:mod1}{ D_{|\ol{V}^N}\ge E_{|\ol{V}^N}.} 
The category of modulus pairs with morphisms $\uMCor(\sX, \sY)$ and composition induced by the 
composition of finite correspondences is denoted by $\uMCor$; its
full subcategory of proper modulus pairs by $\MCor$.
We have a fully faithful functor $\Cor\to \uMCor$, $X\mapsto (X,\emptyset)$,  and we will abbreviate notation and write
\eq{para:mod2}{(X,\emptyset)=:X \quad \text{in }\uMCor.}
Furthermore $\uMPST$ (resp. $\uMNST$) denotes the category of presheaves (resp. Nisnevich sheaves) on $\uMCor$,
similarly with $\MPST$ and $\MNST$, for details see {\em loc. cit.}

There is a symmetric monoidal structure on $\uMCor$ defined by
\eq{para:mod3}{(X,D)\otimes (Y,E)= (X\times Y, p_X^* D+ p_Y^*D),}
where $p_X$ and $p_Y$ define the projections to $X$ and $Y$, respectively.
It extends to a symmetric monoidal structure $\otimes_{\uMPST}$ on $\uMPST$.
\end{para}


\begin{para}\label{para:RSC}
We denote by $\PST$ (resp. $\NST$) the category of presheaves (resp. Nisnevich sheaves) with transfers on $\Sm$.
Let  $F\in \PST$. Following \cite[Definition 2.2.4]{KSY2} we say $a\in F(U)$ has {\em modulus} $\sX=(X,D)$
(called SC-modulus in {\em loc. cit.}), if
$\sX$ is a proper modulus pair with $U=X\setminus|D|$ and the Yoneda map $a:\Ztr(U):=\Cor(-,U)\to F$ 
factors via  the quotient $q: \Ztr(U)\surj h_0(\sX)$, where $h_0(\sX)$ is the presheaf with transfers given on $S\in \Sm$ by
\eq{para:RSC1}{h_0(\sX)(S):=
\Coker\left(\uMCor(\bcube\otimes S, \sX)\xr{i_0^*-i_1^*} \Ztr(U)(S)\right),\quad \bcube=(\P^1, \infty),}
with $i_e: \{0\}\to \P^1$ the inclusion of the $e$-section, $e\in \{0,1\}$.

We say $F$ is a reciprocity  presheaf if any $a\in F(U)$ has a modulus,  for any $U\in\Sm$.
The category of reciprocity presheaves is denoted by $\RSC$ and 
$\RSC_\Nis=\RSC\cap \NST$ denotes  the category of reciprocity sheaves; both are abelian, 
for the latter see \cite[Theorem 0.1]{S-purity}. 

For $F\in \RSC_\Nis$ and $\sX=(X,D)$ we denote by $\tF(X,D)$ those sections $a\in F(X\setminus|D|)$
having a modulus of the form $(\ol{X}, \ol{D}+ B)$, where $\ol{D}$ and $B$ are effective Cartier divisors such that 
$X=\ol{X}\setminus|B|$ and $\ol{D}_{|X}=D$.
Then $\sX\mapsto \tF(\sX)$ defines a Nisnevich sheaf on $\uMCor$ and with  the notation from 
\cite[2.4]{KMSY1} and \cite[Proposition 2.3.7]{KSY2} we have
\eq{para:RSC1.5}{\tF=\tau_!\omega^{\CI} F,}
which is also equal to $\uomega^{\CI} F$ from the introduction.
Moreover $\tF$ is $\bcube$-invariant, semi-pure, and has $M$-reciprocity, i.e.,
\[\tF\in \CItspNis\subset \uMNST,\]
see e.g. \cite[1.]{MS} for notation and the references there. 
We also recall that for any $G\in \CItspNis$, we have $\uomega_!G\in \RSC_\Nis$ and $\uomega_!\tF=F$, where
\[\uomega_!G(X)=G(X,\emptyset).\]
Finally if $(X,D)=\varprojlim_i (X_i, D_i)$ is a pro-modulus pair in the sense of \cite[3.7]{RS}
we define 
\eq{para:RSC1.6}{G(X,D)=\varinjlim_i G(X_i,D_i).}
If $R$ is a regular noetherian $k$-algebra and $I\subset R$ is an ideal which is invertible   defining
a Cartier Divisor $D_I$ on $\Spec R$, then $(\Spec R, D_I)$ is a pro-modulus pair (by a result of Popescu \cite{Popescu})
 and we set
\eq{para:RSC2}{ G(R, I^{-1}):= G(\Spec R, D_I).}
Usually we will refer to a pro-modulus pair simply as modulus pair.
\end{para}

\begin{para}\label{para:twist}
We recall some twists introduced in \cite[5e]{RSY} and \cite[2.]{MS}.
By \cite[Corollary 4.18]{RSY} there is a lax monoidal structure on $\RSC_\Nis$
which in particular assigns to $F_1,F_2\in \RSC_\Nis$ a new reciprocity sheaf  denoted by
\[(F_1,F_2)_{\RSC_{\Nis}}\]
(denoted by $h_{0,\Nis}(\tF_1\otimes_{\uMPST} \tF_2)$ in {\em loc. cit.}).
Let $F\in \RSC_\Nis$. For $n\ge 0$ we define
\eq{para:twist1}{ F\la 0\ra := F, \quad F\la n+1\ra:= (F\la n\ra, \G_m)_{\RSC_\Nis}, }
and 
\eq{para:twist2}{\gamma^0 F:= F, \quad \gamma^1 F:=\uHom_{\PST}(\G_m, F), \quad 
\gamma^{n+1}F:=\gamma^1 (\gamma^n F). }
For $F\in \RSC_\Nis$ and $n,m\ge 0$ we have
\begin{enumerate}[label=(\arabic*)]
\item\label{para:twist3.1} $\gamma^n F$, $F\la n\ra\in \RSC_\Nis$ and 
$F\la n+m\ra= F\la m\ra\la n\ra$, $\gamma^{m+n} F= \gamma^n(\gamma^m F)$;
\item\label{para:twist3.2} $F\in \HI_\Nis$ (=$\A^1$-invariant sheaves in $\NST$)
                 $\Rightarrow$  $F\la n\ra = F\otimes_{\HI} \G_m^{\otimes_{\HI} n}$ (see \cite[Theorem 5.3]{RSY});
\item\label{para:twist3.3} 
there is a natural surjection $F\otimes_{\NST} K^M_n\surj F\la n\ra$ in $\NST$ and 
an isomorphism $\gamma^n F=\uHom_{\PST}(K^M_n, F)$, see \cite[Proposition 9.3]{BRS},
where $K^M_n$ denotes the improved $n$-th Milnor $K$-sheaf from \cite{Kerz}.
\item\label{para:twist3.4} Furthermore we will use the following shorthand  in the rest of the text
\[\gamma^n F\la m\ra:= \gamma^n(F\la m\ra)=\uHom_{\PST}(K^M_n, F\la m\ra).\]
The weak cancellation theorem in the form \cite[Theorem 5.2]{MS} yields
\[\gamma^n F\la m\ra \cong 
\begin{cases} 
F\la m-n\ra, &  m\ge n,\\ 
\gamma^{m-n} F, & n\ge m.
\end{cases}\]
\end{enumerate}
\end{para}

\section{A Brylinski-Kato formula and continuity of a pairing}
In this section $k$ is a perfect field. We fix $G\in \CItspNis$ and we will use the shorthand notation
\eq{notation2}{G(X):=\uomega_!G(X):=G(X,\emptyset).}

\begin{prop}\label{prop:BK}
Let $(R, \fm)$ be a  regular henselian local $k$-algebra 
of geometric type and set $\sX=\Spec R\{t\}$ (see Notation \ref{nota}),
 $\sZ=V(t)\subset \sX$.
For $a\in R\{t\}$ and $n\ge 2$ denote by $f_a: \sX\to \sX$ the morphism induced
by the unique local $R$-algebra morphism $R\{t\}\to R\{t\}$ with $t\mapsto t+at^n$. 
Then $f_a$ defines a morphism of modulus pairs $f_a: (\sX, n\sZ)\to (\sX,n\sZ)$  and 
the map induced by pullback along $f_a$
\[f_a^*: \frac{G(\sX,n\sZ)}{G(\sX)}\lra \frac{G(\sX,n\sZ)}{G(\sX)}\]
is equal to the identity.
\end{prop}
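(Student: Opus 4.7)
The first assertion is immediate: $f_a^*(t) = t(1+at^{n-1})$, where $1+at^{n-1}$ is a unit in $R\{t\}$ (it reduces to $1$ at the closed point, which is a unit in the residue field). Thus $f_a^*(n\sZ) = n\sZ$ as Cartier divisors on the normal scheme $\sX$, and the admissibility condition \eqref{para:mod1} holds as an equality.

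For the main claim my plan is to construct an $\bcube$-homotopy between $\id$ and $f_a$. The natural candidate is the morphism of $k$-schemes
\[
F : \A^1_s \times \sX_u \longrightarrow \sX_t, \qquad (s, u) \longmapsto u + sau^n,
\]
which satisfies $F \circ i_0 = \id$ and $F \circ i_1 = f_a$, where $i_\epsilon : \sX \hookrightarrow \A^1 \times \sX$ is the section at $s = \epsilon \in \{0,1\}$. If $F$ extended to a morphism $\bcube \otimes (\sX, n\sZ) \to (\sX, n\sZ)$ in $\uMCor$, the $\bcube$-invariance of $G$ (i.e.\ $p^* : G(\sX, n\sZ) \isom G(\bcube \otimes (\sX, n\sZ))$ with inverse $i_\epsilon^*$) would immediately yield $\id^* = i_0^* F^* = i_1^* F^* = f_a^*$ on $G(\sX, n\sZ)$.

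The obstruction is that $F^*(nV(t)) = nV(u) + nV(1+sau^{n-1})$, so the extra component $V(1+sau^{n-1}) \subset \A^1_s \times \sX$ is not dominated by the source modulus $\infty \times \sX + n(\P^1 \times V(u))$ of $\bcube \otimes (\sX, n\sZ)$. Its closure in $\P^1 \times \sX$ meets the boundary $\{s=\infty\}\cup\{u=0\}$ only at the single point $(\infty,0)$, with intersection multiplicity $n-1$. I would fix this by a sequence of at most $n-1$ blowups of $\P^1 \times \sX$ centered at $(\infty,0)$ and its infinitely near base points, producing a proper birational modification $\pi : \tilde Y \to \P^1 \times \sX$ such that $\tilde F := F \circ \pi$ extends to a morphism admissible for a suitable effective divisor $\tilde D$ on $\tilde Y$. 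Since $\pi$ is an isomorphism away from $(\infty,0)$, in particular over $s=0$ and $s=1$, we still have sections $\tilde i_\epsilon$ with $\tilde F \circ \tilde i_\epsilon$ recovering $\id$ and $f_a$.

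To descend, I would compare $(\tilde Y, \tilde D)$ with $\bcube \otimes (\sX, n\sZ)$. The crucial point is that the exceptional locus of $\pi$ lies entirely over $(\infty,0)$, and in particular over the divisor $\{u=0\} = \sZ$, so the ``extra'' contributions to $\tilde D$ are pullbacks of divisors supported on $\sZ$; after applying $\tilde i_\epsilon^*$, their cohomological effect lies in the image of the pullback $G(\sX) \hookrightarrow G(\sX, n\sZ)$. The main obstacle will be exactly this last step --- carefully tracking the modulus through the blowups and combining $\bcube$-invariance with semi-purity (or blow-up invariance of $\CItspNis$-sheaves in the spirit of \cite{KMSY2}) to show that the error introduced by the birational modification is a coboundary modulo $G(\sX)$. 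This is precisely why the equality can only be asserted after passing to the quotient $G(\sX, n\sZ)/G(\sX)$.
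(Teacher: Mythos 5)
Your approach is genuinely different from the paper's, and it contains a real gap that you flag yourself but do not close.

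The paper does \emph{not} argue via a $\bcube$-homotopy. Instead it first finitely approximates: using Lemma~\ref{lem:gc-rel-curve} it finds an affine Nisnevich neighborhood $X_a=\Spec A_a$ of $(X=\Spec R[t],\ x=(\fm,t))$ with $a\in A_a$, $1+at^{n-1}\in A_a^\times$, and with good compactification, so that $G(\sX,n\sZ)/G(\sX)\cong G(X_a,nZ_a)/G(X_a)$. It then reduces the claim to $\phi_a^*=j_a^*$ on $G(X,nZ)/G(X)$, and proves this by exhibiting the graphs of $\phi_a$ and $j_a$ as $\Div(h_a)$ and $\Div(s-t)$ on $X_a\times_S X$ and checking (Claim~\ref{prop:BK6}) that $\frac{h_a}{s-1}$ and $\frac{s-t}{s-1}$ are \emph{admissible} rational functions in the sense of \cite[Definition 2.3]{S-purity}; \cite[Theorem 2.10(2)]{S-purity} then kills both correspondences modulo the constant one. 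This is a local rational-equivalence argument on a fiber product, not a cube-homotopy, and the quotient by $G(\sX)$ appears exactly because $\Div(s-1)^*$ (the constant section) lands in $G(X_a)$.

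Two concrete problems with your plan. First, the geometric claim that the closure of $B=V(1+sau^{n-1})$ meets $\{s=\infty\}\cup\{u=0\}$ only at $(\infty,0)$ is false in general: writing $\sigma=1/s$, the equation becomes $\sigma+au^{n-1}=0$, so $\ol B\cap\{\sigma=0\}=\{\infty\}\times V(au^{n-1})=\{\infty\}\times(V(a)\cup \sZ)$, which for general $a\in R\{t\}$ and $\dim R>0$ is far more than a point; your ``at most $n-1$ blowups at $(\infty,0)$'' would not separate $\ol B$ from the boundary. Second, and as you say yourself, even after achieving a morphism of modulus pairs $\tilde F:(\tilde Y,\tilde D)\to(\sX,n\sZ)$, the source is no longer of the form $\bcube\otimes(\sX,n\sZ)$, so $\bcube$-invariance does not directly give $\tilde i_0^*=\tilde i_1^*$. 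You would need a section-level blow-up invariance for $\CItspNis$-objects over the henselian pro-scheme $\sX$ relating $G(\tilde Y,\tilde D)$ to $G(\bcube\otimes(\sX,n\sZ))$; no such statement is established in the references you invoke, and proving it is roughly as hard as the proposition itself. Note also that you are working directly over the pro-scheme $\sX$; the paper's approximation step (Lemma~\ref{lem:gc-rel-curve}) is not cosmetic — the admissibility machinery of \cite{S-purity} operates on finite-type schemes with good compactifications.

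In short: the first sentence (that $f_a$ is a morphism of modulus pairs) is fine, but the main argument needs to be replaced by the admissible-correspondence argument, or you must genuinely supply the blow-up invariance you are implicitly relying on, after first correcting the description of where $\ol B$ meets the boundary.
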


We first need the following lemma which is a version of \cite[Lemma 6.7]{S-purity}.

\begin{lem}\label{lem:gc-rel-curve}
Let $R$ be an excellent normal henselian local ring with infinite residue field, set $S=\Spec R$, 
and denote by $s\in S$ its closed point.
Let $\pi: X\to S$ be a smooth morphism  of relative dimension one and let $Z\subset X$ be a 
closed irreducible subscheme which is finite over $S$ and  such that we have 
an isomorphism of residue fields $\kappa(s)\xr{\simeq}\kappa(x)$, where 
$x\in Z$ denotes the closed point.
Then 
\[\sO_{X,x}^h=\varinjlim_{(U,x_U)\to (X,x)} \sO(U),\]
where the colimit is over all Nisnevich neighborhoods $u:(U, x_U)\to (X,x)$
satisfying the following properties
\begin{enumerate}[label=(\arabic*)]
\item\label{lem:gc-rel-curve1} $U$ is affine;
\item\label{lem:gc-rel-curve2} $u$ induces an isomorphism $Z_U:=u^{-1}(Z)\xr{\simeq} Z$;
\item\label{lem:gc-rel-curve3} $(\pi\circ u: U\to S, Z_U)$  has a good compactification in the sense of 
\cite[Definition 2.1]{S-purity}.(Recall that this means that $\pi\circ u$ factors as  $U\xr{j} \ol{U}\xr{\ol{\pi}_U}S$
with $j$ an open immersion, $\ol{\pi}_U$ proper, $\ol{U}$ is normal, $\ol{U}\setminus U$ is the support of an effective
Cartier divisor $U_\infty$, and $Z_U\sqcup U_\infty\subset$ affine open of $\ol{U}$.)
\end{enumerate}
\end{lem}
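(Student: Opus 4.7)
The strategy is to show cofinality: every Nisnevich neighborhood of $x$ in $X$ can be refined to one satisfying the three listed conditions. Granting cofinality, the asserted identification of $\sO_{X,x}^h$ as a colimit is immediate. Fix an arbitrary Nisnevich neighborhood $u_0\colon V_0 \to X$ of $x$ and modify it in three successive steps.

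For (1), replace $V_0$ by an affine open neighborhood of $x_{V_0}$. For (2), note that since $Z \to S$ is finite with $\kappa(s) \xrightarrow{\sim} \kappa(x)$, the scheme $Z$ is the spectrum of a henselian local ring with residue field $\kappa(s)$. The preimage $u_0^{-1}(Z)$ is \'etale over $Z$ and contains $x_{V_0}$ with trivial residue field extension, so henselianness of $Z$ provides a splitting $u_0^{-1}(Z) = Z \sqcup Z'$ in an open neighborhood of $x_{V_0}$. Removing the closed subset $Z'$ and shrinking further to an affine open yields a $V$ satisfying both (1) and (2).

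The main work is arranging (3). Since $V$ is affine and of finite type over $R$, choose a closed immersion $V \hookrightarrow \A^N_S$ and let $\bar V$ be its closure in $\P^N_S$. Replacing $\bar V$ by its normalization, which is finite since $R$ is excellent, we may assume $\bar V$ is normal and projective over $S$. The complement $\bar V \setminus V$ is pure of codimension one in $\bar V$; blowing up its reduced ideal and renormalizing, we may further assume $V_\infty := \bar V \setminus V$ is the support of an effective Cartier divisor while retaining normality and projectivity over $S$. Both $V_\infty$ and $Z_V$ are proper of relative dimension zero over $S$, hence finite over $S$, so $Z_V \sqcup V_\infty$ meets the special fiber $\bar V_s$ in a finite set of closed points. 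Since $\kappa(s)$ is infinite and $\bar V_s$ is projective, a Bertini-type argument produces a section of some very ample line bundle on $\bar V_s$ whose zero locus meets every irreducible component of $\bar V_s$ while avoiding those points. Lifting the section to $\bar V$ using projectivity of $\bar V$ over $S$ and henselianness of $S$ yields an effective Cartier divisor $D$ on $\bar V$ disjoint from $Z_V \sqcup V_\infty$ whose complement is affine over $S$. This gives the required good compactification.

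The main obstacle is step (3), and within it the affine-open-neighborhood condition; this is exactly where the hypotheses that $\kappa(s)$ is infinite (for Bertini on the special fiber), that $R$ is excellent (for finiteness of the normalization), and that $S$ is henselian (for lifting sections from the special fiber to $\bar V$) are essential.
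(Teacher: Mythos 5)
Your overall strategy (show cofinality by refining an arbitrary neighborhood in three steps) matches the paper's, and steps (1) and (2) are essentially what the paper does. But step (3) has a genuine gap: you assert, without justification, that after taking the projective closure $\bar V \subset \P^N_S$ and normalizing, the boundary $V_\infty = \bar V \setminus V$ "is proper of relative dimension zero over $S$, hence finite over $S$." This is \emph{not} automatic, and in fact is false for a generic choice of embedding $V \hookrightarrow \A^N_S$. For example, with $S = \Spec k[[s]]$ and $V = \Spec k[[s]][y,\,1/(sy-1)]$ embedded in $\A^2_S$ via $(y,\,1/(sy-1))$, the projective closure $\bar V \subset \P^2_S$ is the hypersurface $sWY - WT - T^2 = 0$; one checks (via Serre's criterion) that $\bar V$ is already normal, yet $\bar V \setminus V$ contains the entire line $\{T=0\}$ over $s=0$, which is a $\P^1$ in the special fiber. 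So $V_\infty$ has a component of relative dimension one over $S$, and normalizing or blowing up does not remove it. Purity of codimension one of the complement in $\bar V$ only controls the codimension in $\bar V$, not the fiber dimension over $S$.

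This is precisely the difficulty that the paper circumvents by invoking Levine's Theorem 10.0.1 at the outset: that result produces, after a Nisnevich shrinking of $V$, a \emph{specific} closed embedding $V \hookrightarrow \A^n_S$ whose projective closure $\bar V \subset \P^n_S$ is finite and surjective over $\P^1_S$ and has $\bar V \setminus V$ finite over $S$. Your construction would need this or an equivalent relative-Noether-normalization input; without it the rest of step (3) (the Bertini/ample-divisor argument and the lifting) does not get off the ground, since one cannot arrange $Z_V \sqcup V_\infty$ to lie in an affine open if $V_\infty$ contains positive-dimensional fibers. As a secondary point, the "lift a section of a very ample line bundle from $\bar V_s$ to $\bar V$" step also requires more than projectivity and henselianness; the paper instead cites the moving-lemma result of Gabber--Liu--Lorenzini, which is designed exactly for this.
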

\begin{proof}
First we show that $(X,x)$ admits one Nisnevich neighborhood with the stated property.
Note that by assumption $Z=\Spec R'$, with $R'$ henselian local.
By \cite[Theorem 10.0.1]{Levine} (see also the explanations in \cite[5.1]{Levine})
we find a Nisnevich neighborhood $v:(V, x_V)\to (X, x)$
which admits a closed immersion $i: V\inj \A^n_S$, such that
its closure $\ol{V}\subset \P^n_S$ admits a finite and surjective map to $\P^1_S$ and such that
$\ol{V}\setminus V$ is finite over $S$.
In particular, 
$\ol{V}\setminus V= (\P^n_S\setminus\A^n_S)\cap \ol{V}$ is a Cartier divisor in $\ol{V}$
and $\ol{V}$ is equidimensional of relative dimension one over $S$.
Denote by $Y\to \ol{V}$ the normalization; it is 
an isomorphism over $V$, moreover it is a finite morphism, since $S$ is excellent.
Since $Z$ is henselian local, we can write $v^{-1}(Z)= Z_V\sqcup C$ with 
$Z_V\cong Z$ and the components of $C$ are either finite over $Z$ or don't meet the special fiber.
Since $\ol{V}\setminus V$ is finite over $S$, the intersection of the closure $\ol{C}\subset Y$ with the special fiber $Y_s$ is finite;
hence by \cite[Lemma 12.1]{S-purity} $\ol{C}$ is finite over $S$.
Since $Z_V$ is finite over $S$, we have $\ol{C}\cap Z_V=\emptyset$.
By \cite[Theorem 5.1]{GLL} we find an effective ample Cartier divisor $H\subset Y$
such that $\ol{C}\subset H$, $H\cap (Z_V\cap Y_s)=\emptyset$, and $H$ does not contain $Y_s$. 
By \cite[Lemma 12.1]{S-purity}, we have $H\cap Z_V=\emptyset$ and $H$ is finite over $S$.
Similarly, we find an effective ample Cartier divisor $A\subset Y$ such that
$H\cup (Y\setminus V)\cup Z_V\subset Y\setminus A$; note that $Y\setminus A$ is affine.
We set $U:= V\setminus (V\cap H)$ and $x_U:=x_V$. Then $(U, x_U)\to (X,x)$ is a 
Nisnevich neighborhood which satisfies \ref{lem:gc-rel-curve1} - \ref{lem:gc-rel-curve3},
where we can take  $U\inj Y$ as a good compactification of $(U\to S, Z_U)$.

If $\nu:(X', x')\to (X,x)$ is any Nisnevich neighborhood, we find a $Z'\subset \nu^{-1}(Z)$ mapping isomorphically 
to $Z$. Now replacing $(X,Z,x)$ in the above discussion by $(X', Z', x')$ shows
 that any Nisnevich neighborhood of $(X,x)$ has a refinement satisfying \ref{lem:gc-rel-curve1} - \ref{lem:gc-rel-curve3}.
\end{proof}

Recall from \cite[Definition 5.1]{S-purity} that a pair $(X\to S,Z)$ as in Lemma \ref{lem:gc-rel-curve}
is by definition a {\em nice V-pair} if  it admits a good compactification, $Z$ is reduced and \'etale over $S$, and
$nZ$ diagonally embedded into $nZ\times_S X$ is a principal divisor, for all $n\ge 1$.

\begin{proof}[Proof of Proposition {\ref{prop:BK}}.]

By the standard trace argument we may assume $R/\fm$ is infinite.
Set $S=\Spec R$, $X=\Spec R[t]$, and $Z=V(t)\subset X$. The pair $(X\to S,Z)$ is a nice V-pair 
by \cite[Lemma 5.2]{S-purity}. Let $x\in X$ be the point corresponding to $\fm+(t)$.
Let $A=R\{t\}$  and take $a\in A$.
By Lemma \ref{lem:gc-rel-curve} we find an $R[t]$-algebra $A_a\subset A$ satisfying the following conditions:
\begin{enumerate}[label = (\roman*)]
\item\label{prop:BK1}
$a\in A_a$ and $1+a t^{n-1}\in A_a^\times$;
\item\label{prop:BK2}
$X_a=\Spec(A_a)$ is a Nisnevich neighborhood of $(X,x)$;
\item\label{prop:BK3} the natural map $j_a:X_a\to X$ induces an isomorphism $Z_a:=j_a^{-1}(Z)\cong Z$;
\item\label{prop:BK4} $(X_a\to S, Z_a)$ admits a good compactification.
\end{enumerate}
By \cite[Lemma 4.2, Lemma  4.3]{S-purity}, $(X_a,Z_a)$ is a nice $V$-pair over $S$. 
Consider the $S$-morphism
\[ \phi_a :X_a \to X \quad \text{induced by } R[t]\to A_a, \quad  t\mapsto t+a t^n=t(1+a t^{n-1}).\]
By  \ref{prop:BK1} and \ref{prop:BK3}, $Z_a\cong \phi_a^{-1}(Z)$. 
 Let
\[ \lambda :\sX \to X\quad \text{and} \quad \lambda_a : \sX\to X_a\]
be the maps induced by the natural inclusions 
\[ R[t]\to A\quad \text{and}\quad A_a\to A,\]
respectively. Note that $\lambda= j_a\circ \lambda_a$. 
We obtain the following commutative diagrams
\[\xymatrix{
\frac{G(X,nZ)}{G(X)} \ar[r]^{\lambda^*}\ar[d]^{j_a^*} 
&  \frac{G(\sX,n\sZ)}{G(\sX)} \ar[d]^{id} \\
\frac{G(X_a,nZ_a)}{G(X_a)} \ar[r]^{\lambda_a^*} &  \frac{G(\sX,n\sZ)}{G(\sX)} \\}
\qquad
\xymatrix{
\frac{G(X,nZ)}{G(X)} \ar[r]^{\lambda^*}\ar[d]^{\phi_a^*} 
&  \frac{G(\sX,n\sZ)}{G(\sX)} \ar[d]^{f_a^*} \\
\frac{G(X_a,nZ_a)}{G(X_a)} \ar[r]^{\lambda_a^*} &  \frac{G(\sX,n\sZ)}{G(\sX)}. \\
}
\]
The horizontal arrows are  isomorphisms by \cite[Corollary 2.21]{S-purity}
and Lemma \ref{lem:gc-rel-curve}.
Thus it suffices to show
\eq{prop:BK5}{
\phi_a^* = j_a^* : \frac{G(X,nZ)}{G(X)} \to \frac{G(X_a,nZ_a)}{G(X_a)}.
}
Set
\[ h_a= s - (t+a t^n) \in \Gamma(X_a\times_S X,\sO)=A_a[s],\]
where $X=\Spec R[s]$. Then $\Div(h_a)$ and $\Div(s-t)\subset X_a\times_S X$
are the graphs of $\phi_a$ and $j_a$, respectively.
Since furthermore $\Div(s-1)$ is the graph of the composition of the projection $X_a\to S$ with the 1-section 
$S\inj \A^1_S=X$, we see that $\Div(s-1)^*$ induces the zero map between the quotients in \eqref{prop:BK5}.
Thus  the equality in \eqref{prop:BK5} follows   by \cite[Theorem 2.10(2)]{S-purity} from the following claim.
\begin{claim}\label{prop:BK6}
$\frac{h_a}{s-1}$ and $\frac{s-t}{s-1}$ are admissible for 
$\big((X,nZ),(X_a,nZ_a)\big)$ (cf. \cite[Definition 2.3]{S-purity}), i.e., 
with $\theta_h=\Div(h)$, for $h\in\{\frac{h_a}{s-1}, \frac{s-t}{s-1}\}$, we have 
\begin{enumerate}[label=(\roman*)]
\item\label{prop:BK7}  $h$ is regular in a neighborhood of $X_a\times_S Z$; 
\item\label{prop:BK8}
$\theta_h\times_X nZ $ is the image of the diagonal map 
$nZ \to X_a\times_S X$ coming from the isomorphism $n Z_a\simeq nZ$ induced by $j_a$;
\item\label{prop:BK9}
$h$ extends to an invertible function on a neighborhood of $X_a\times_S \infty$ in 
$X_a\times_S \P^1_S$.
\end{enumerate}
\end{claim}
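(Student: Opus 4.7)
The plan is to verify conditions \ref{prop:BK7}--\ref{prop:BK9} by direct computation on $X_a\times_S X=\Spec A_a[s]$ and its compactification $X_a\times_S \P^1_S$, treating the two rational functions $h\in\{\frac{s-t}{s-1},\frac{h_a}{s-1}\}$ in parallel.

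For \ref{prop:BK7}, observe that $X_a\times_S Z$ is cut out by $s=0$ in $X_a\times_S X$, whereas the pole locus of each $h$ lies on $V(s-1)$; since $V(s)\cap V(s-1)=\emptyset$, the open subscheme $X_a\times_S X\setminus V(s-1)$ is a Zariski neighborhood of $X_a\times_S Z$ on which both rational functions are regular. For \ref{prop:BK9}, pass to the coordinate $u=1/s$ near $X_a\times_S\{\infty\}$: multiplying numerator and denominator by $u$ gives
\[
\frac{s-t}{s-1}=\frac{1-tu}{1-u}, \qquad \frac{h_a}{s-1}=\frac{1-(t+at^n)u}{1-u},
\]
both of which evaluate to $1$ at $u=0$ and hence are invertible on a Zariski neighborhood of $X_a\times_S\{\infty\}$ in $X_a\times_S\P^1_S$.

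The heart of the argument is \ref{prop:BK8}. On $X_a\times_S nZ=\Spec A_a[s]/(s^n)$ the denominator $s-1$ is a unit, so it suffices to analyze the numerators. The image of the diagonal map $nZ_a\iso nZ \hookrightarrow X_a\times_S X$ is cut out by $(s-t,t^n)$, which coincides with $(s-t,s^n)$ since $t\equiv s$ modulo $(s-t)$. We therefore need to prove the ideal equality
\[
(s-t)+(s^n)=(s-t-at^n)+(s^n) \quad\text{in } A_a[s].
\]
Since $(s-t)-(s-t-at^n)=at^n$, it suffices to place $at^n$ in each right-hand side. Modulo $(s-t)$ one has $s=t$, whence $t^n=s^n$ and $at^n\in(s-t)+(s^n)$. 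Modulo $(s-t-at^n)$ one has $s=t(1+at^{n-1})$, and by property \ref{prop:BK1} of the Nisnevich neighborhood $X_a$ the element $1+at^{n-1}$ lies in $A_a^\times$; hence $s^n$ and $t^n$ generate the same ideal modulo $(s-t-at^n)$, giving $at^n\in(s-t-at^n)+(s^n)$.

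The main obstacle is \ref{prop:BK8}: the crucial input is the invertibility of $1+at^{n-1}$ from property \ref{prop:BK1}, which is exactly what forces the two numerators $s-t$ and $s-t-at^n$ to cut out the same subscheme on the thickening $X_a\times_S nZ$. This is precisely why the Nisnevich neighborhood $X_a$ was chosen to satisfy this unit condition in the application of Lemma \ref{lem:gc-rel-curve}.
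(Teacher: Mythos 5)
Your proof is correct and follows essentially the same route as the paper: (i) and (iii) are handled by the same disjointness and $\sigma=1/s$ substitution arguments, and for (ii) you reduce to the same ideal equality modulo $s^n$ and exploit the unit $u=1+at^{n-1}\in A_a^\times$ exactly as the paper does, merely packaging the computation as ``$(s^n)=(t^n)$ modulo $h_a$'' rather than writing out the explicit division $t^n=(t-\tfrac{s}{u})(t^{n-1}+\dots+(\tfrac{s}{u})^{n-1})+(\tfrac{s}{u})^n$. The only cosmetic difference is that you state the ideal identity in $A_a[s]$ rather than in $A_a[s,\tfrac{1}{s-1}]$, which is harmless since $s-1$ is already a unit modulo $(s^n)$.
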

\ref{prop:BK7} is immediate in both cases. Note that $s-1$ is a unit on $X_a\times_S {nZ}$.
Hence \ref{prop:BK8} follows from the equality of ideals in $A_a[s, \frac{1}{s-1}]$ :
\[ (h,s^n) = (s-t,t^n,s^n). \]
This is immediate for $h=s-t/s-1$; for $h=h_a/s-1$ we set $u:=1+a t^{n-1}\in A_a^\times$ and then the above equality
follows from
\[t^n= (t-\tfrac{s}{u})(t^{n-1}+ t^{n-2}\tfrac{s}{u}+\ldots + (\tfrac{s}{u})^{n-1})+ (\tfrac{s}{u})^n\in (h_a, s^n).\]
Letting $\sigma=1/s$, \ref{prop:BK9} follows from 
\[ \tfrac{h_a}{s-1}= \tfrac{1 -\sigma(t+a t^n)}{1-\sigma} \quad \text{and} \quad 
\tfrac{s-t}{s-1}=\tfrac{1-\sigma t}{1-\sigma}  \quad\text{in } A_a[\sigma].\]
This proves Claim  \ref{prop:BK6} and completes the proof of Proposition \ref{prop:BK}.
\end{proof}

\begin{cor}\label{cor:BK}
Let $K$ be a function field over $k$ and denote by (see Notation \ref{nota})
\[\pi:\sX:= \Spec K\{x,t\}\to T:=\Spec K\{t\}\]
the natural map induced by the inclusion  $K\{t\}\inj K\{x,t\}$,
and set $\sZ=V(t)\subset \sX$.
Let $a\in K\{t\}$ and $n\ge 2$. Consider the closed immersions
\[ i: C:=V(x-t)\inj \sX \quad \text{and}\quad i_a: C_a:=V(x-t-at^n)\inj \sX.\]
Then the compositions
\[\pi_C=\pi\circ i: C\to T \quad \text{and}\quad \pi_{C_a}=\pi\circ i_a: C_a\to T\]
are isomorphisms and for any $g\in G(\sX,n\sZ)$ we have
\[\pi_{C*}i^*g - \pi_{C_a*}i_a^*g\in G(T),\]
where $\pi_{C*}$ is induced by the action of the transpose of the graph of $\pi_C$, 
which is equal to $(\pi_C^*)^{-1}$, similarly with $\pi_{C_a*}$ .
\end{cor}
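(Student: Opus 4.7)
The approach is to apply Proposition~\ref{prop:BK} twice with two different choices of the regular henselian local base ring $R$, exploiting that $\sO(\sX) = R\{t\}$ for $R = K\{x\}$ while $\sO(T) = R\{t\}$ for $R = K$. The isomorphism claim for $\pi_C$ and $\pi_{C_a}$ is immediate since the compositions $K\{t\} \hookrightarrow K\{x,t\} \twoheadrightarrow K\{x,t\}/(x-t)$ and $K\{t\} \hookrightarrow K\{x,t\} \twoheadrightarrow K\{x,t\}/(x-t-at^n)$ become the identity on $K\{t\}$ after identifying the quotients with $K\{t\}$ via $x \mapsto t$ and $x \mapsto t + at^n$, respectively.

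For the main claim, I first view $\sX = \Spec R\{t\}$ with $R = K\{x\}$ and apply Proposition~\ref{prop:BK} to the $R$-algebra endomorphism $f_a$ given by $t \mapsto t + at^n$ and $x \mapsto x$. Admissibility holds because $1 + at^{n-1}$ is a unit, so $f_a^{-1}(\sZ) = \sZ$; the proposition then yields $f_a^* g - g \in G(\sX)$ for all $g \in G(\sX, n\sZ)$. The crucial geometric observation is that $f_a^*(x - t) = x - t - at^n$, which implies $f_a^{-1}(C) = C_a$; hence $f_a$ restricts to an isomorphism $f_a|_{C_a}: C_a \xr{\sim} C$ which, under the identifications $\pi_C$ and $\pi_{C_a}$, becomes the endomorphism $g_a: T \to T$, $t \mapsto t + at^n$. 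Combining the identity $f_a \circ i_a = i \circ f_a|_{C_a}$ with the resulting commutativity $\pi_{C_a*} \circ (f_a|_{C_a})^* = g_a^* \circ \pi_{C*}$, pulling back $f_a^* g - g \in G(\sX)$ by $i_a$ and pushing forward by $\pi_{C_a*}$ produces
\[ g_a^*(\pi_{C*} i^* g) - \pi_{C_a*} i_a^* g \in G(T). \]

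Next, I apply Proposition~\ref{prop:BK} a second time, now with $R = K$, so that $T = \Spec R\{t\}$ and $s_T = V(t)$. The same endomorphism $g_a: t \mapsto t + at^n$ of $T$ then satisfies $g_a^* h - h \in G(T)$ for every $h \in G(T, n s_T)$; taking $h = \pi_{C*} i^* g$ gives
\[ g_a^*(\pi_{C*} i^* g) - \pi_{C*} i^* g \in G(T). \]
Subtracting the two displayed relations yields the desired $\pi_{C*} i^* g - \pi_{C_a*} i_a^* g \in G(T)$. The main conceptual obstacle is to recognize that Proposition~\ref{prop:BK} should be iterated: the first application transports $C_a$ onto $C$ at the cost of a twist by $g_a$, and the second absorbs that twist on $T$.
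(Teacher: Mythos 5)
Your proposal is correct and takes essentially the same approach as the paper: the paper packages the two applications of Proposition~\ref{prop:BK} (once for $R=K\{x\}$ on $\sX$, once for $R=K$ on $T$) into a commutative $3\times 2$ diagram of the quotient groups $G(\cdot,\cdot)/G(\cdot)$ with vertical maps $f_a^*$, $f_{C,a}^*$, $f_{T,a}^*$ and observes that the two outer vertical maps are the identity, whereas you carry out the same chase elementwise and subtract the two resulting relations. The key geometric observations are identical --- $f_a^{-1}(C)=C_a$, the restriction of $f_a$ corresponds under $\pi_C,\pi_{C_a}$ to the endomorphism $t\mapsto t+at^n$ of $T$ --- and both arguments apply Proposition~\ref{prop:BK} with the same two base rings.
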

\begin{proof}
It is direct to check that $\pi_{C_a}$ and $\pi_C$ are isomorphisms.
Let $f_a: (\sX, n\sZ)\to (\sX, n\sZ)$ be the $K\{x\}$-morphism induced by $t\mapsto t+at^n$.
Set $V(x-t, t^n)=:n\sZ_C\subset \sX$. Then 
$C\cap n\sZ=n\sZ_C= C_a\cap n\sZ=f_a^{-1}(C\cap n\sZ)$.
Thus $f_a$ restricts to a morphism 
$f_{C,a}: (C_a, n \sZ_C)\to (C, n\sZ_C)$. 
Denote by $0$ the closed point of $T$ and define the $K$-morphism
\[f_{T,a}: (T, n\cdot 0)\to (T, n\cdot 0) \quad \text{induced by}\quad  
K\{t\}\to K\{t\}, \quad t\mapsto t+at^n.\]
We have 
\[f_{T, a}\circ \pi_{C_a}= \pi_C \circ f_{C,a}: (C_a, n\sZ_C)\to (T, n\cdot 0)\]
and obtain the commutative diagram
\[\xymatrix{
\frac{G(\sX, n\sZ)}{G(\sX)}\ar[r]^{i^*}\ar[d]^{f_a^*}&
\frac{G(C, n\sZ_C)}{G(C)}\ar[r]^{\pi_{C*}}\ar[d]^{f_{C,a}^*}&
\frac{G(T, n\cdot 0)}{G(T)}\ar[d]^{f_{T, a}^*}\\
\frac{G(\sX, n\sZ)}{G(\sX)}\ar[r]^{i_a^*}&
\frac{G(C_a, n\sZ_C)}{G(C_a)}\ar[r]^{\pi_{C_a*}}&
\frac{G(T, n\cdot 0)}{G(T)}.
}\]
By Proposition \ref{prop:BK} we find that $f_a^*$  (resp. $f_{T, a}^*$ )
induces the identity on the left hand side (resp. on the right hand side) in the above diagram.
This yields the statement.
\end{proof}

\begin{rmk}\label{rmk:BK}
Let $L$ be a henselian dvf of geometric type over $k$ with $\sO_L$ the ring of integers and $\fm$ the maximal ideal.
Recall from \cite[Def 4.14]{RS} that we can associate a conductor to $G$ which at $L$ is the
function $c_L^G: G(L)\to \N_0$ given by (see \eqref{para:RSC2} for notation)
\[c^G_L(g)= \min\{n\ge 0\mid g\in G(\sO_L, \fm^{-n})\}.\]
In the situation of Corollary \ref{cor:BK}, let $g\in G(\sX, n\sZ)$.
The intersections $C\cap\sZ=C_a\cap \sZ$ are equal to the closed point of $\sX$ and 
the intersection multiplicity of $C$ and $C_a$ at this closed point is $\ge n$ and the corollary implies that 
$c^G_L(g_{|C})= c^G_{L_a}(g_{|C_a})$, where $L$ (resp. $L_a$) is the function field of $C$ (resp. $C_a$).

In case $G=\widetilde{H^1}$ (see \eqref{para:RSC1.5} for notation), where
$H^1$ is the reciprocity sheaf $X\mapsto \Hom_{\rm cont}(\pi_1(X)^{\rm ab}, \Q/\Z))$ (see \cite[8.1]{RS}), Corollary \ref{cor:BK} gives back a special case of an Artin-version of the Brylinski-Kato formula in 
\cite[Theoremm 0.2]{KatoSwan} (see also \cite[Proposition 4]{Brylinski}).
It is a special case since in \cite{KatoSwan} the curves are only assumed to intersect the divisor
$\sZ$ transversally and  have intersection multiplicity $\ge n$, but do not need the specific form
as in the corollary above; it is the Artin version, since in \cite{KatoSwan} the statement is for the Swan conductor
whereas $c^{H^1}$ is equal to the Artin  conductor, by \cite[Theorem 8.8]{RS}.
Note that by \cite{RS} we can apply Corollary \ref{cor:BK} also for the (non-log) irregularity of rank one connections 
(in char $0$), the Artin conductor of lisse $\ol{\Q}_\ell$-sheaves, the conductor of fppf-torsors under 
finite $k$-group schemes etc. We will prove the Brylinsky-Kato formula in full generality in \cite{RS-AS} (without the restriction on the special choice of $C$ and $C_a$).
\end{rmk}

\begin{para}\label{para:adj-pairing}
By \cite[4.3]{BRS} we have a functor
\eq{para:adj-pairing0}{h^{\bcube,\sp}_{0,\Nis}: \uMPST^{\tau}:=\tau_!(\MPST)\to \CItspNis,}
which is the sheafified and semi-purified maximal cube invariant quotient  and is 
left-adjoint to the inclusion functor $\CItspNis\to\uMPST^{\tau}$.
We set 
\eq{para:adj-pairing2.5}{\gamma^1G:= \uHom_{\uMPST}(\uomega^*\G_m, G), \quad 
G(1):=h_{0,\Nis}^{\bcube,\sp}(G\otimes_{\uMPST}\uomega^*\G_m),}
where $(\uomega^*\G_m)(X,D)=\G_m(X\setminus D)$.
By the above and \cite[Lemma 1.9(1)]{BRS} we have $\gamma^1G$, $G(1)\in \CItspNis$.
\footnote{The notation $\gamma^1 G$ is compatible with \eqref{para:twist2},  in the sense that $\uomega_!\gamma^1 G=\gamma^1\uomega_!G$,  
by \cite[Proposition 2.10]{MS}.}
Consider the natural pairing on $\uMCor$
\eq{para:adj-pairing1}{\uHom_{\uMPST}(\uomega^*\G_m, G)\times \uomega^*\G_m\to 
\uHom_{\uMPST}(\uomega^*\G_m, G)\otimes_{\uMPST} \uomega^*\G_m\to G,}
where the first map is the natural one and 
the second map is the counit of adjunction. Since $G\in \CItspNis$,  this composition induces a pairing
\eq{para:adj-pairing0.5}{\gamma^1 G \times \uomega^*\G_m\to (\gamma^1 G)(1)\to G.}
By \cite[(2.2) and Corollary 2.2]{MS} we have for $\sX= (X,D)\in \uMCor$
\eq{para:adj-pairing3}{\gamma^1 G(\sX)= \frac{G(\sX\otimes (\P^1, 0+\infty))}{p_2^* G(\sX)},}
where $p_2: \sX\otimes (\P^1, 0+\infty)\to \sX$ is induced by the projection $X\times\P^1 \to X$ (Note that $\gamma^1 G(\sX)$ is a direct summand of $G(\sX\otimes (\P^1, 0+\infty))$).
For a henselian regular local $k$-algebra $R$ of geometric type, \eqref{para:adj-pairing0.5} induces the bilinear pairing (cf. \eqref{notation2})
\eq{para:adj-pairing4}{\gamma^1G(R)\times R^\times \to (\gamma^1G)(1)(R)\to G(R).}
By \cite[Lemma 5.6]{BRS} the precomposition of \eqref{para:adj-pairing4} with
the natural map $G(\P^1_R, 0_R+\infty_R)\times R^\times\to \gamma^1G(R)\times R^\times$ is given by
\eq{para:adj-pairing5}{G(\P^1_R, 0_R+\infty_R)\times R^\times\to G(R), \quad (a, f)\mapsto \delta_f^*a-\delta_1^*a,}
where $\delta_f: \Spec R\to \Spec R\times \G_m$ denotes the graph of the $k$-morphism $f\in \Hom(\Spec R, \G_m)$.
\end{para}

\begin{lem}\label{lem:cont}
Let $(\sO_L,\fm)$ be a henselian dvr of geometric type over $k$.
Then the pairing 
\[(\gamma^1 G)(L)\times L^\times \xr{\eqref{para:adj-pairing4}} G(L)\]
 restricts to a pairing
\[(\gamma^1G)(\sO_L, \fm^{-n})\times U_L^{(n)}\to G(\sO_L), \quad n\ge 1,\]
where $U_L^{(n)}=1+\fm^n$.
\end{lem}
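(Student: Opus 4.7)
The plan is to reduce Lemma \ref{lem:cont} to Corollary \ref{cor:BK} via a Nisnevich localization and a linear change of coordinates. Using \eqref{para:adj-pairing3}, I first lift $g \in (\gamma^1 G)(\sO_L, \fm^{-n})$ to a section $a \in G\bigl((\sO_L, n s_L) \otimes (\P^1, 0 + \infty)\bigr)$, so that by \eqref{para:adj-pairing5} the pairing value equals $\delta_f^* a - \delta_1^* a$. Since $f, 1 \in \sO_L^\times$, both $\delta_f$ and $\delta_1$ are admissible morphisms of modulus pairs $(\Spec\sO_L, n s_L) \to (\sO_L, n s_L) \otimes (\P^1, 0+\infty)$, so each pullback lies in $G(\sO_L, n s_L)$. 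By semi-purity of $G \in \CItspNis$, it therefore suffices to show that the difference lies in the subgroup $G(\sO_L) \subset G(\sO_L, n s_L)$.

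To set up the local picture, I identify $\sO_L \cong K\{\pi\}$, where $K = \kappa(s_L)$ is a function field over $k$ and $\pi$ is a uniformizer (this uses that $\sO_L$ is of geometric type and $k$ is perfect), and write $f = 1 + b\pi^n$ for some $b \in K\{\pi\}$. Shifting the coordinate on $\G_m \subset \P^1$ via $s := t - 1$ and passing to the Nisnevich stalk at $(s_L, 0) \in \Spec \sO_L \times \A^1$ identifies this stalk with $\Spec K\{\pi, s\}$; the section $a$ restricts there to $\tilde a \in G(\Spec K\{\pi, s\}, n V(\pi))$, and the pullbacks of $\tilde a$ to the subschemes $V(s)$ and $V(s - b\pi^n)$ are the Nisnevich germs of $\delta_1^* a$ and $\delta_f^* a$, respectively.

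Finally, I invoke Corollary \ref{cor:BK} after the substitution $x := s + \pi$, $t_{\mathrm{cor}} := \pi$, which identifies $K\{x, t_{\mathrm{cor}}\}$ with $K\{\pi, s\}$ and transforms the curves $V(x - t_{\mathrm{cor}})$ and $V(x - t_{\mathrm{cor}} - b\, t_{\mathrm{cor}}^n)$ of the corollary into $V(s)$ and $V(s - b\pi^n)$, respectively. The hypothesis $b \in K\{t_{\mathrm{cor}}\}$ is satisfied since $b \in K\{\pi\}$, and for $n \ge 2$ the corollary then yields
\[ \delta_1^* a - \delta_f^* a \in G(T) = G(\sO_L), \]
which is the desired statement. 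The boundary case $n = 1$, for which Corollary \ref{cor:BK} does not directly apply, requires a separate but analogous argument.

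The main obstacle is matching the graphs $V(s)$ and $V(s - b\pi^n)$ of $\delta_1$ and $\delta_f$ (both tangent to the $\pi$-axis $s = 0$) with the form $V(x - t), V(x - t - at^n)$ required by Corollary \ref{cor:BK}, which describe two curves tangent to the diagonal $x = t$. The substitution $x = s + \pi$ realigns these pairs, and its validity depends on the identification $\sO_L \cong K\{\pi\}$ coming from the geometric-type hypothesis and perfectness of $k$.
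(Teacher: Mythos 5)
Your proof is correct for $n\ge 2$ and follows essentially the same strategy as the paper: lift a class in $(\gamma^1G)(\sO_L,\fm^{-n})$ to $G\bigl((\Spec\sO_L,n\,s_L)\otimes(\P^1,0+\infty)\bigr)$, express the pairing value via \eqref{para:adj-pairing5}, pass to the Nisnevich stalk at $(s_L,1)$, and invoke Corollary \ref{cor:BK}. The only real difference is how the graphs are brought into the shape $V(x-t)$, $V(x-t-at^n)$ required by the corollary: the paper first uses the multiplicative decomposition $f=(1+t+at^n)/(1+t)$ with $a=b(1+t)$, so that the graphs of $\delta_{1+t}$ and $\delta_{1+t+at^n}$ become $V(x-t)$ and $V(x-t-at^n)$ directly after setting $x=y-1$, whereas you pair with $\delta_f$ and $\delta_1$ directly (graphs $V(s-b\pi^n)$ and $V(s)$) and then apply the extra linear shear $x=s+\pi$, $t=\pi$ to move the constant graph onto the diagonal. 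Both variants are legitimate and reduce to exactly the same input.

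Your remark about $n=1$ is a fair observation: Corollary \ref{cor:BK} (and Proposition \ref{prop:BK}) are stated for $n\ge 2$ because only then is $1+at^{n-1}$ automatically a unit, and the paper's proof of Lemma \ref{lem:cont} cites the corollary for all $n\ge 1$ without comment, so the paper is implicitly relying on the same extension that you flag. The missing case is genuinely there but is mild: one can, for instance, use the decomposition $f=(e+t+a't)/(e+t)$ with $e\in K^\times$, $e\neq 1$ (so that $a'=b(e+t)$ and $1+a'=1-e-t$ is a unit, even when $b(0)=-1$), together with the obvious extension of Proposition \ref{prop:BK} to $n=1$ under the hypothesis that $1+a$ is a unit; alternatively, factor $f=(1+b(0)t)\cdot f''$ with $f''\in U^{(2)}_L$ and handle the linear factor by the same shifted decomposition. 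Since you acknowledged the issue rather than asserting a false claim, this is not a flaw in your reasoning so much as an incomplete final step, one that the paper itself leaves implicit.
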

\begin{proof}
The choice of a coefficient field $K\inj \sO_L$ and a local parameter $t\in\fm$ yields an isomorphism
$\sO_L\cong K\{t\}$.
Set $T=\Spec K\{t\}$ and denote by $0$ the closed point.
Since any element in $U_L^{(n)}$ can be written in the form
$(1+t+a t^n)/(1+t)$, with $a\in K\{t\}$, it suffices to show 
\eq{lem:cont1}{\delta_{1+t+at^n}^*=\delta_{1+t}^*: G((T,n\cdot 0)\otimes (\P^1, 0+\infty))\to  
\frac{G(T\setminus 0)}{G(T)}.}
Write $\P^1\setminus\{\infty\}=\Spec k[y]$ and set $x=y-1$.
Then the henselization of the local ring of $\P^1_T$ at $(t,x)$ is identified with
$K\{x,t\}$. Let $\sX=\Spec K\{x,t\}$ and $\sZ=\{t=0\}\subset \sX$.
Then $\delta_{1+t+at^n}$ is the composition 
\[T\xr{\simeq, \,\pi_{C_a}^{-1}} C_a\xr{i} \sX\to \P^1_T,\]
where we use  the notation from Corollary \ref{cor:BK} and where the last map is the natural one;
for $\delta_{1+t}$ the same holds with $C_a$ replaced by $C$ and $i_a$ replaced by $i$.
Hence \eqref{lem:cont1} follows from this corollary.
\end{proof}

The following is the main result of this section,
which will play a key role in the proof of the Zariski-Nagata purity Theorem \ref{thm:ZNP}.

\begin{thm}\label{thm:cont}
Let $F\in \RSC_\Nis$ and let $(\sO_L,\fm)$ be a henselian dvr of geometric type over $k$.
Then  (with the  notation from \ref{para:RSC}, \ref{para:twist}) the pairing 
\[F(L)\times L^\times\to (F\otimes_{\NST} \G_m)(L)\to F\la 1 \ra(L)\]
 induces a pairing
\[\tF(\sO_L,\fm_L^{-n}) \times U^{(n)}_L\to F\la 1\ra(\sO_L).\]
\end{thm}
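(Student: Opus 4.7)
The strategy is to deduce the theorem from Lemma \ref{lem:cont} applied to a carefully chosen $G\in \CItspNis$, after identifying the relevant sections of $\gamma^1 G$ and $G$ with $\tF$-sections and $F\la 1\ra$-sections respectively. Set
\[ G:=\widetilde{F\la 1\ra}=\uomega^{\CI}(F\la 1\ra)\in \CItspNis, \]
so that $\uomega_! G=F\la 1\ra$ and in particular $G(\sO_L)=F\la 1\ra(\sO_L)$ by \eqref{notation2}. By the footnote to \eqref{para:adj-pairing2.5} together with the weak cancellation statement \ref{para:twist}\ref{para:twist3.4}, we have
\[ \uomega_!\gamma^1 G = \gamma^1\uomega_!G = \gamma^1(F\la 1\ra) = F. \]
Thus $\tF$ and $\gamma^1 G$ are two objects of $\CItspNis$ which both recover $F$ after applying $\uomega_!$.

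The first step is to construct a natural morphism $\Phi\colon \tF\to \gamma^1 G$ in $\CItspNis$ lifting the identity on $F$. This is obtained by combining the defining surjection $F\otimes_{\NST}\G_m\surj F\la 1\ra$ of \ref{para:twist}\ref{para:twist3.3} with the adjunction $\uomega_!\dashv \uomega^{\CI}$: the resulting morphism $\uomega_!(\tF\otimes_{\uMPST}\uomega^*\G_m)\to F\la 1\ra$ factors through the unit $\tF\otimes_{\uMPST}\uomega^*\G_m\to G=\widetilde{F\la 1\ra}$ in $\uMPST$, and tensor-hom adjunction turns this into $\Phi\colon \tF\to \uHom_{\uMPST}(\uomega^*\G_m, G)=\gamma^1 G$. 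Evaluating $\Phi$ at the modulus pair $(\sO_L,\fm_L^{-n})$ and pairing with $U_L^{(n)}$ through Lemma \ref{lem:cont} applied to $G$ produces the composition
\[ \tF(\sO_L,\fm_L^{-n})\times U_L^{(n)}\xr{\Phi\times \id}(\gamma^1 G)(\sO_L,\fm_L^{-n})\times U_L^{(n)}\lra G(\sO_L)=F\la 1\ra(\sO_L), \]
which is the desired refinement.

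It remains to check that this composition agrees with the restriction of the pairing $F(L)\times L^\times\to F\la 1\ra(L)$ appearing in the statement. This is a compatibility between two descriptions of the same pairing and is established by tracing through the explicit formula \eqref{para:adj-pairing5}: after unwinding the adjunction defining $\Phi$, a class in $\gamma^1 G(\sO_L,\fm_L^{-n})$ lifting $a\in \tF(\sO_L,\fm_L^{-n})$ can be represented by a section $\tilde a\in G(\P^1_{\sO_L},(0+\infty)_{\sO_L}+p^*\fm_L^{-n})$ modulo $p_2^*G(\sO_L,\fm_L^{-n})$, for $p\colon \P^1_{\sO_L}\to \Spec \sO_L$ the projection, and the pairing $(\gamma^1G)(L)\times L^\times\to G(L)$ sends $(a,u)$ to $\delta_u^*\tilde a-\delta_1^*\tilde a\in F\la 1\ra(L)$, which by construction of $\Phi$ is precisely the image of $a\otimes u$ under $F\otimes_{\NST}\G_m\to F\la 1\ra$.

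The main obstacle is the bookkeeping needed to construct $\Phi$ and check this last compatibility: one must verify that the lax monoidal structure on $\uomega^{\CI}$ is compatible with the one on $\NST$, which allows the $\NST$-level pairing $F\otimes \G_m\to F\la 1\ra$ to be upgraded to a $\uMPST$-pairing $\tF\otimes \uomega^*\G_m\to G$ respecting moduli; equivalently, one needs the semi-purity and $\bcube$-invariance of $G$ to legitimately factor the pairing through $\gamma^1 G$ with the correct modulus constraints. Once this is in place, Lemma \ref{lem:cont} (itself resting on Corollary \ref{cor:BK}, hence on the Brylinski--Kato-type Proposition \ref{prop:BK}) takes care of the remaining content of the theorem.
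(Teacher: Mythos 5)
Your proposal is correct and reduces to Lemma \ref{lem:cont} by essentially the same mechanism as the paper: you construct a $\uMPST$-morphism from $\tF$ into $\gamma^1 G$ for an appropriate $G\in\CItspNis$ with $\uomega_!G = F\la 1\ra$, and invoke the triangle identity to identify the induced pairing with the original one. The only deviation is cosmetic: the paper applies the lemma directly to $G=\tF(1)=h^{\bcube,\sp}_{0,\Nis}(\tF\otimes_{\uMPST}\uomega^*\G_m)$ via the tensor-hom unit $\tF\to\gamma^1(\tF(1))$, whereas you take $G=\widetilde{F\la 1\ra}=\uomega^{\CI}(F\la 1\ra)$, which forces one extra application of the $\uomega_!\dashv\uomega^{\CI}$ adjunction (i.e.\ further composing with the unit $\tF(1)\to\uomega^{\CI}\uomega_!\tF(1)=\widetilde{F\la 1\ra}$) to produce $\Phi$; both choices recover $F\la 1\ra(\sO_L)$ after evaluating at $(\sO_L,\emptyset)$, and neither requires $\Phi$ to be an isomorphism, so the cancellation isomorphism $\gamma^1(F\la 1\ra)\cong F$ you invoke is motivational rather than load-bearing.
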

\begin{proof}
Consider the diagram
\[\xymatrix{
\tF\times \uomega^*\G_m\ar[d]_{\ol{{\rm unit}}\otimes \id_{\G_m}}\ar[r] & 
\tF(1)\ar[d]_{\ol{{\rm unit}}(1)}\ar@{=}[drr] & &\\
\gamma^1(\tF(1)) \times \uomega^*\G_m\ar[r] & (\gamma^1(\tF(1))(1)\ar[rr]_-{\ol{\text{counit}}_{\tF(1)}} & &
\tF(1).
}\]
where we use the notation from \eqref{para:adj-pairing2.5},  
the horizontal pairings are induced  by the natural map for $G\in \uMPST$:
\eq{thm:cont1}{G\otimes_{\uMPST} \uomega^*\G_m\to 
h_{0,\Nis}^{\bcube,\sp}(G\otimes_{\uMPST} \uomega^*\G_m)=G(1),}
the map $\ol{\text{counit}}_{\tF(1)}$ is  induced by the counit of the adjunction 
\[(-)\otimes_{\uMPST} \uomega^*\G_m\dashv \uHom_{\uMPST}(\uomega^*\G_m,-)\]
(see \eqref{para:adj-pairing0.5}), and the vertical maps are induced by 
\[G\xr{\rm unit} \uHom_{\uMPST}(\uomega^*\G_m, G\otimes_{\uMPST} \uomega^*\G_m)\xr{\eqref{thm:cont1}}
\uHom_{\uMPST}(\uomega^*\G_m, G(1)),\]
where ``unit'' denotes the unit of the above adjunction.
We claim the diagram commutes. Indeed for the square this holds by functoriality and for the triangle,
this follows from the fact that the composition 
\[\tF\otimes \uomega^*\G_m\xr{\text{unit}\otimes \id} 
\uHom(\uomega^*\G_m, \tF\otimes\uomega^*\G_m)\otimes\uomega^*\G_m
\xr{\text{counit}_{\tF\otimes\uomega^*\G_m}} \tF\otimes\uomega^*\G_m,\]
where all $\otimes$ and $\uHom$ are in $\uMPST$,  is the identity for general reasons, e.g. \cite[IV, Theorem 1]{MacLane}.
We have:
\begin{enumerate}[label=(\roman*)]
\item $F\la 1\ra =\uomega_! (F(1))$, by definition \cite[5.21]{RSY};
\item $\uomega_! \gamma^1 G=\uomega_!\uHom_{\uMPST}(\uomega^*\G_m,G)=\uHom_{\PST}(\G_m,\uomega_!G)
= \gamma^1\uomega_! G$ for $G\in \uMPST$,
where the middle equality holds by \cite[Proposition 2.10]{MS}.
\end{enumerate}
Thus, applying $\uomega_!$ to the diagram above and evaluating on $L$ gives the diagram
\[\xymatrix{
F(L)\times L^\times \ar[r]\ar[d] & F\la 1\ra(L)\ar[d]\ar@{=}[dr]& \\
\gamma^1(\tF(1))(L) \times L^\times\ar[r] & \left(\gamma^1(\tF(1)))(1)\right)(L)\ar[r] & F\la 1\ra(L),
}\]
where we use the notation \eqref{notation2} in the bottom line.
Hence the statement follows from Lemma \ref{lem:cont} with $G=\tF(1)\in \CItspNis$.
\end{proof}

\begin{rmk}\label{rmk:adj-pairing}
We remark that the pairing \eqref{para:adj-pairing4} can be trivial.
Indeed, e.g., by the projective bundle formula \cite[Theorem 6.3]{BRS} 
we have  $\gamma^1\tilde{\G}_a(R)=H^1(\P^1_R,\sO_{\P^1_R})=0$,  with $\G_a$ the additive group.
However, the pairing \eqref{para:adj-pairing4} for $G=\tilde{\G_a}(1)$ is non-trivial.
Indeed, assume char$(k)=0$, then $\tilde{\G}_a(1)=\widetilde{\Omega^1_{-/\Z}}$ by \cite[Theorem 11.1]{BRS}.
And using \cite[Theorem 6.4]{RS} and \eqref{para:adj-pairing3} we find (for $n\ge 1$)
\[\gamma^1(\tilde{\G}_a(1))(\sO_L, \fm^{-n})= \frac{1}{t^{n-1}}\cdot\sO_L \cdot \dlog (y),\]
where $t$ is the local parameter of $\sO_L$ and $y$ is the standard coordinate of $\P^1$,
and the pairing \eqref{para:adj-pairing5} is is given by
\[(\tfrac{1}{t^{n-1}} a\cdot \dlog(y), f)\mapsto \tfrac{1}{t^{n-1}} a\dlog(f), \]
which is clearly in $\Omega^1_{\sO_L/\Z}$, if $f\in U^{(n)}_L$.
\end{rmk}

\section{Pushforwards with compact supports}\label{sec:pfcs}
In this section $k$ is a perfect field. We fix $F\in \RSC_\Nis$. For $X\in \Sm$ we denote by $F_X$ 
the restriction of $F$ to $X_{\Nis}$.

\begin{para}\label{para:proj-pf}
Let $f: X\to Y$ be a projective morphism in $\Sm$ of relative dimension $r\in\Z$.
In \cite[9.5]{BRS} there is constructed a pushforward 
\eq{para:proj-pf1}{ f_* : Rf_*(\gamma^b F\la a+r\ra_{X})[r] \to \gamma^b F\la a\ra_Y,}
for all $a,b\ge 0$ with $a+r\ge 0$, in $D(Y_\Nis)$ the derived category of abelian Nisnevich sheaves on $Y$.
Recall that the pushforward  is essentially given by 
a Gysin map for a closed immersion of $X$ into a projective bundle over $Y$
followed by a projective trace map stemming from the projective bundle formula.
By \cite[Thm 9.7]{BRS} this pushforward has all the expected properties.
\end{para}

\begin{para}\label{para:ext-zero}
Let $\iota: Z\inj X$  be a locally closed immersion between  separated schemes.
For $G$ a sheaf of abelian groups on $Z_\Nis$, we define $\iota_!G$ as the subsheaf of $\iota_* G$ on $X_\Nis$
given on $V\to X$ \'etale by
\[\iota_! G(V)=\{s\in G(Z\times_X V)\mid {\rm supp}(s) \text{ is proper over }V\}.\]
It follows from \cite[Exp. XVII, Prop 6.1.1]{SGA4III} that $\iota_!G$ is a Nisnevich sheaf on $X$.
If we choose a factorization  
\eq{para:ext-zero1}{\iota: Z\xr{i} U\xr{j} X,}
with $i$ a closed immersion and $j$ an open embedding, it follows directly from the definition, that we have 
\[\iota_! G =j_! i_* G.\]
Hence, for $V\to X$ \'etale and $v\in V$ a point  
\[\iota_! G(\sO_{V,v}^h)=
\begin{cases} G(\sO_{V_Z,v}^h), & \text{if } v\in V_Z=V\times_X Z,\\ 0, & \text{else.}  \end{cases}\]
From this formula we see that $\iota_!$ is exact and furthermore, if $\kappa: W\inj Z$ is another
locally closed immersion with $W$ separated and $H$ is a sheaf on $W_\Nis$, then there is a canonical identification 
\[(\iota\kappa)_! H = \iota_!\kappa_! H \quad \text{inside } (\iota\kappa)_*H=\iota_*\kappa_*H.\]
Since $\iota_!$ is an exact functor on the category of abelian Nisnevich sheaves, 
it extends to an exact triangulated functor between the derived categories
\[\iota_! :D(Z_\Nis)\to D(X_\Nis).\]
If $j: U\inj Z$ is open, then $j_!$ is equal to the  extension-by-zero functor from \cite[Exp. IV, Prop 11.3.1]{SGA4I} which is left adjoint to $j^{-1}$. 
The adjunction extends to an adjunction 
$j_!: D(U_\Nis)\leftrightarrows D(X_\Nis): j^{-1}$.

\begin{lem}
For a commutative diagram 
\eq{para:ext-zero1a}{
\xymatrix{
  Z'\ar[d]^g\ar[r]^\kappa & X'\ar[d]^f\\
Z\ar[r]^\iota & X
}}
with $f$ and $g$ proper and $\iota$, $\kappa$ locally closed immersions, 
we have a natural equivalence  
\eq{para:ext-zero2}{\iota_!g_*\isom f_*\kappa_!: {\rm Sh}(Z'_\Nis)\to {\rm Sh}(X_\Nis).}
\end{lem}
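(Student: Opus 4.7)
The plan is to verify the natural isomorphism directly on sections, using the explicit formula for $\iota_!$ given just before the lemma. For an étale map $V \to X$, the commutativity $\iota g = f\kappa$ provides the natural identification
\[ (V \times_X Z) \times_Z Z' \;=\; V \times_X Z' \;=\; (V \times_X X') \times_{X'} Z', \]
under which the sections of $g_* G$ over $V \times_X Z$ and the sections of $\kappa_! G$ over $V \times_X X'$ both present as sections $s \in G(V \times_X Z')$, subject to different propriety conditions on $\operatorname{supp}(s)$.

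I would first unravel the left hand side. Writing $g_V : V \times_X Z' \to V \times_X Z$ for the base change of $g$, which is proper, a section $t$ of $g_* G$ on $V \times_X Z$ vanishes on an open $W$ if and only if $s$ vanishes on $g_V^{-1}(W)$, so $\operatorname{supp}(t)$ equals the closure of $g_V(\operatorname{supp}(s))$; by properness of $g_V$ this image is already closed, and $\operatorname{supp}(t) = g_V(\operatorname{supp}(s))$ is proper over $V$ if and only if $\operatorname{supp}(s)$ is, since properness descends along the proper surjection $\operatorname{supp}(s) \to g_V(\operatorname{supp}(s))$. Hence $\iota_! g_* G(V)$ is the set of $s \in G(V \times_X Z')$ with $\operatorname{supp}(s)$ proper over $V$.

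For the right hand side, $f_* \kappa_! G(V) = \kappa_! G(V \times_X X')$ is by definition the set of $s \in G(V \times_X Z')$ whose support is proper over $V \times_X X'$. The base change $f_V : V \times_X X' \to V$ of $f$ is proper, hence separated, so if $\operatorname{supp}(s)$ is proper over $V \times_X X'$ composition with $f_V$ makes it proper over $V$; conversely, in the factorization $\operatorname{supp}(s) \to V \times_X X' \to V$ the composite is proper and the second map separated, forcing the first to be proper as well. Thus the right hand side is identified with the same subset as the left, and the resulting bijection is manifestly functorial in $G$ and in $V$.

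I do not anticipate a serious obstacle. The one delicate point is tracking how the support of a section behaves under pushforward along $g$ and base change by $f$, but this is precisely where the properness hypotheses on $f$ and $g$ enter, and both conditions reduce cleanly to propriety over $V$.
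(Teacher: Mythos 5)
Your proposal is correct and follows essentially the same approach as the paper's proof: pick $V\to X$ étale, identify both $\iota_!g_*G(V)$ and $f_*\kappa_!G(V)$ with a subset of $G(V\times_X Z')$ cut out by a propriety condition on the support, and then use properness of $g$ and $f$ to compare those conditions. One small difference in presentation: you correctly observe that $\operatorname{supp}(t)=g_V(\operatorname{supp}(s))$ (the image) and invoke descent of properness along a proper surjection, whereas the paper works with the preimage $S\times_Z Z'$ and is slightly loose in calling it the support of $s$ in $V\times_X Z'$; the paper's argument survives because it only uses the containment $\operatorname{supp}(s)\subset S\times_Z Z'$ in one direction and the surjectivity $\operatorname{supp}(s)\twoheadrightarrow S$ in the other, but your formulation makes the mechanism more transparent. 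Your symmetric characterization of both sides as ``$\operatorname{supp}(s)$ proper over $V$'' is a clean way of organizing the same computation.
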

\begin{proof}
Let $V\to X$ be \'etale 
and $s\in \iota_*g_*G(V)=G(V\times_X Z')= G(V'\times_{X'} Z')=f_*\kappa_*G(V)$ a section,
where $V'=f^{-1}(V)=V\times_X X'$. Assume $s\in \iota_!g_*G(V)$, i.e., viewing $s$ as a section in $g_*G(V\times_X Z)$, its support $S\subset  V\times_X Z$ is proper over $V$. Viewing $s$ as a section in $G(V\times_X Z')$, its support is equal 
to $S'=S\times_Z Z'$. As $g$ is proper, the composition $S'\subset V\times_X Z'\to V\times_X Z\to V$ 
is also proper.  Since this last map factors as 
$V\times_X Z'=V'\times_{X'} Z'\to V'\xr{f_{|V'}} V$, we conclude that $S'$ 
is proper over $V'$. This defines the map \eqref{para:ext-zero2}. 
On the other hand, if $s\in f_*\kappa_!G(V)$, i.e.,  $S'$ is proper over $V'$, 
then $S'$  is also proper over $V$, as $f$ is proper, whence $S$ is proper over $V$ as well, i.e., $s\in \iota_!g_*G(V)$. 
This shows that \eqref{para:ext-zero2} is an isomorphism.
\end{proof}

In \eqref{para:ext-zero1a}, if $C$ is a complex of Nisnevich sheaves on $Z'_{\rm Nis}$, choose a $K$-injective resolution $C\to I$ and a $K$-injective 
resolution $\kappa_!I\to J$. We obtain a natural morphism 
$\iota_!g_*I\to f_*\kappa_!I\to f_*J$ which yields a natural transformation
\eq{para:ext-zero3}{\alpha_\sigma:\iota_!Rg_*\to Rf_* \kappa_!: D(Z'_\Nis)\to D(X_\Nis),}
where $\sigma$ denotes the square \eqref{para:ext-zero1a}.
It follows directly from the construction that this transformation is functorial in the following sense:
assume given another square $\sigma'$ as \eqref{para:ext-zero1a}:
\[\xymatrix{
  T'\ar[d]^h\ar[r]^\mu & Z'\ar[d]^g\\
T\ar[r]^\lambda & Z,
}\]
then we have the equality
\eq{para:ext-zero4}{\alpha_{\sigma\circ \sigma'}= \alpha_{\sigma}\circ \alpha_{\sigma'}:
(\iota\lambda)_!Rh_*\to \iota_!Rg_*\mu_! \to Rf_* (\kappa\mu)_!,}
where $\sigma\circ \sigma'$ denotes the square obtained by concatenating $\sigma$ and $\sigma'$ in the obvious way.
Furthermore, if $\iota$ and $\kappa$ are closed immersions, \eqref{para:ext-zero3} is equal to the isomorphism
$\iota_*Rg_*\cong Rf_*\kappa_*$.
\end{para}

\begin{defn}\label{defn:gysin}
Let $\iota: Z\inj X$  be a locally closed immersion in $\Sm$ of pure codimension $c$.
For $a\ge c$ and $b\ge 0$  we define the {\em Gysin map} for $Z\inj X$ by
\[g_{Z/X}: \iota_! \gamma^b F\la a-c\ra_Z [-c]\to \gamma^b F\la a\ra_X\quad  \text{in } D(X_\Nis) \]
by choosing a factorization \eqref{para:ext-zero1} and defining  $g_{Z/X}$ as the composition 
\[j_!(i_*\gamma^b F\la a-c\ra_Z [-c])\xr{j_! (i_*)} j_!(\gamma^b F\la a\ra_U)\xr{\text{nat}} \gamma^b F\la a\ra_X,\]
where $i_*$ denotes the pushforward from \eqref{para:proj-pf1} for $f=i$ and 
the map ``nat'' denotes the counit of the adjunction $j_!\dashv j^{-1}$.
This definition is independent of the factorization \eqref{para:ext-zero1} by Lemma \ref{lem:gysin}\ref{lem:gysin1} below.
\end{defn}

\begin{lem}\label{lem:gysin}
Let $ \iota: Z\inj X$ be as in Definition \ref{defn:gysin} above.
\begin{enumerate}[label=(\arabic*)]
\item\label{lem:gysin1} $g_{Z/X}$ is independent of the choice of the factorization \eqref{para:ext-zero1}.
\item\label{lem:gysin2}
Let $\lambda: W\to Z$ be a closed immersion in $\Sm$ of pure codimension $d$ and assume $a\ge c+d$, $b\ge 0$.
Then the following diagram commutes:
\[\xymatrix{
\iota_!\lambda_* \gamma^b F\la a-d-c\ra_W[-d-c] \ar[r]^-{\iota_! g_{W/Z}} \ar@{=}[d]& 
    \iota_! \gamma^b F\la a-c\ra_Z[-d]\ar[d]^{g_{Z/X}}\\
(\iota\lambda)_! \gamma^b F\la a-d-c\ra_W [-d-c]\ar[r]^-{g_{W/X}} & \gamma^b F\la a\ra_X }.\]
\item\label{lem:gysin3}
Let $\tau: Z'\to Z$ be an open immersion and $a\ge c$, $b\ge 0$. Then the following diagram commutes:
\[\xymatrix{
\iota_!\tau_! \gamma^b F\la a-c\ra_{Z'}[-c] \ar[r]^-{g_{Z'/X}} \ar[d] & \gamma^b F\la a\ra_X \\
\iota_!  \gamma^b F\la a-c\ra_Z[-c] \ar[ru]_-{g_{Z/X}} \\},\] 
where the vertical map is induced by the counit
$\tau_! \tau^{-1}\to \id$.
\item\label{lem:gysin4}
Assume we have a commutative triangle
\[\xymatrix{
    & P\ar[d]^{\pi}\\
Z\ar[ur]^{\kappa}\ar[r]^{\iota} & X
}\]
with $\pi$ smooth projective of relative dimension $n$ and $\iota$ and $\kappa$ locally closed immersions 
of pure codimension $c$ and $n+c$, respectively.
Then the following diagram commutes for $a\ge c$, $b\ge 0$
\[\xymatrix{
\iota_!\gamma^b F\la a-c\ra_Z[-c]\ar[r]^{g_{Z/X}}\ar[d]_{\eqref{para:ext-zero3}} & \gamma^b F\la a\ra_X\\
R\pi_*\kappa_! \gamma^b F \la a-c\ra_Z[-c]\ar[r]^{g_{Z/P}} & R\pi_*\gamma^b F\la a+n\ra_P[n]\ar[u]^{\pi_*}
}\]
\end{enumerate}
\end{lem}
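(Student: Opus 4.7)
The plan is to reduce each of the four assertions to two basic inputs: (a) the functoriality and the open-restriction (base change) property of the projective pushforward \eqref{para:proj-pf1} from \cite[Thm 9.7]{BRS}, and (b) the naturality of the counit maps $j_!j^{-1}\to \mathrm{id}$ and $\tau_!\tau^{-1}\to \mathrm{id}$ for open immersions, together with the elementary observations about $\iota_!$ collected in \ref{para:ext-zero}.

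For \ref{lem:gysin1}, given two factorizations $\iota: Z\xr{i_k} U_k \xr{j_k} X$, $k=1,2$, both $U_k$ are contained in $X\setminus(\ol{Z}\setminus Z)$, so $U_{12}:=U_1\cap U_2$ still contains $Z$ as a closed subscheme. It therefore suffices to compare $U_2\subset U_1$ with $h:U_2\inj U_1$ open. For a sheaf $G$ on $Z_\Nis$ we have $(i_1)_*G = h_*(i_2)_*G = h_!(i_2)_*G$, the last equality because $Z$ is closed in $U_1$. The desired equality of the two Gysin maps then expresses the naturality of the counit $h_!h^{-1}\to \mathrm{id}$ applied to $(i_1)_*:(i_1)_*\gamma^b F\la a-c\ra_Z[-c]\to \gamma^b F\la a\ra_{U_1}$, combined with the open base-change identity $h^{-1}((i_1)_*) = (i_2)_*$; both ingredients come from \cite[Thm 9.7]{BRS}.

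For \ref{lem:gysin2}, choose a factorization $\iota: Z\xr{i} U \xr{j} X$. Since $\lambda$ and $i$ are closed immersions, so is $i\lambda: W\inj U$, and $\iota\lambda: W\xr{i\lambda} U \xr{j} X$ is the corresponding factorization of $\iota\lambda$. The statement then reduces to the equality $(i\lambda)_* = i_*\circ \lambda_*$ of pushforwards along composable projective morphisms, which is \cite[Thm 9.7]{BRS}; the $j_!$-part of the assertion is trivial. For \ref{lem:gysin3}, factor $\iota:Z\xr{i}U\xr{j}X$ and set $U':=U\setminus(Z\setminus Z')$, so that $Z'$ is closed in $U'$ and $\iota\tau: Z'\xr{i'}U'\xr{j'} U\xr{j}X$ is a factorization of $\iota\tau$. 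The cartesian square
\[\xymatrix{Z'\ar[r]^\tau \ar[d]_{i'} & Z\ar[d]^i \\ U'\ar[r]^{j'} & U}\]
together with the observations of \ref{para:ext-zero} gives the canonical identification $j'_!(i')_*= i_*\tau_!$, and the claim then follows from the naturality of $j'_!(j')^{-1}\to\mathrm{id}$ applied to the morphism $(i_*): i_*\gamma^b F\la a-c\ra_Z[-c]\to \gamma^b F\la a\ra_U$, using the open base-change property of \eqref{para:proj-pf1} to identify $(j')^{-1}(i_*)$ with $(i')_*$.

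For \ref{lem:gysin4}, factor $\iota:Z\xr{i}U\xr{j}X$ and set $P':=\pi^{-1}(U)$. Then $\kappa$ factors as $Z\xr{\kappa'} P'\xr{\tilde{j}} P$ with $\kappa'$ closed and $\tilde{j}$ open, and the square
\[\xymatrix{P'\ar[r]^{\tilde{j}}\ar[d]_{\pi'} & P\ar[d]^{\pi} \\ U\ar[r]^{j} & X}\]
is cartesian with $\pi'$ smooth projective. Applying $j_!$ and using the compatibility \eqref{para:ext-zero4} of $\alpha_\sigma$ under horizontal composition (together with the base change of the projective pushforward $\pi_*$ along $j$, which is part of \cite[Thm 9.7]{BRS}), we reduce the assertion to the case $X=U$, i.e., $\iota=i$ is a closed immersion and $\pi\kappa = i$ with all three maps proper. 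In this situation the outer diagram collapses to the identity $i_* = \pi_*\circ \kappa_*$, which is again the functoriality of \eqref{para:proj-pf1} under composition of projective morphisms.

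The main obstacle will be \ref{lem:gysin4}, where one has to set up the reduction to the closed-immersion case compatibly with the natural transformation $\alpha_\sigma$ from \eqref{para:ext-zero3}; the bookkeeping between the extension-by-zero $j_!$ on the base and the open base change of the smooth projective map $\pi$ is what makes this step more delicate than the others, which are formal consequences of the functoriality of the projective pushforward and counit naturality.
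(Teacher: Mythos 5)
Your proof follows essentially the same route as the paper's: for (1)--(3) reduce to the comparison of nested factorizations and invoke (i) the composition formula $(i\lambda)_*=i_*\lambda_*$ and (ii) the restriction-along-opens property of the pushforward from \cite[Thm 9.7]{BRS}, plus naturality of the counit $j_!j^{-1}\to\id$; for (4) decompose the square $\sigma$ horizontally using the cartesian diagram over $U$ and \eqref{para:ext-zero4}, reduce to the closed case via $i_*=\pi_*\kappa_*$, and handle the interplay of $j_!$ with the base change of $\pi_*$. Two small remarks: in (1) you introduce $U_{12}=U_1\cap U_2$ and then speak of comparing ``$U_2\subset U_1$''; presumably you mean to compare $U_{12}\subset U_k$ for each $k$ (as the paper does), since $U_1$ and $U_2$ need not be nested. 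In (4) your ``reduction to $X=U$'' is not literally a reduction: after applying $j_!$ to the closed-immersion square over $U$, one still has to verify that the resulting diagram agrees with the target diagram over $X$, which requires the further commutativity of two squares involving $\alpha_{\sigma_2}$, the counits for $j$ and $\tilde j$, and the base change $\pi_{U*}=j^{-1}\pi_*$; the paper does exactly this by drawing the $3\times3$ diagram. You correctly identify all the required inputs and flag this as the delicate step, so the sketch is sound, but it is not a genuine shortcut over the paper's argument.
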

\begin{proof}
In the following we  write for short $G(n)_P:=\gamma^b F\la a+n\ra_P[n]$.

\ref{lem:gysin1}. Let $Z\rmapo {i'} U' \rmapo {j'} X$ be another factorization.
If $j$ factors through an open immersion $\tau: U\to U'$, the assertion is reduced to the commutativity of 
\[\xymatrix{
\tau_! \tau^{-1}i'_* G(-c)_Z\ar[r]^-{\tau_!(i_*)}\ar[d]&
\tau_! \tau^{-1} G(0)_{U'}\ar[d]\\
i'_* G(-c)_Z\ar[r]^-{i'_*} & G(0)_{U'},}\]
where the vertical maps are induced by $\tau_!\tau^{-1}\to \id$ and we use $\tau^{-1}i'_*= i_*$ by \cite[Thm 9.7(3)]{BRS}.
Since the top arrow is equal to $\tau_!\tau^{-1}(i'_*)$, the diagram clearly commutes.
The general case is reduced to the above case by considering the factorization
$Z\to U_1\cap U_2 \to X$.
To show \ref{lem:gysin2} we can choose the factorization $W\xr{i\lambda} U \xr{j} X$ to compute 
$g_{W/X}$. Hence the commutativity of the diagram follows directly from the equality of pushforwards  
$(i\lambda)_*=i_*\circ \lambda_*$, which holds by \cite[Thm 9.7(2)]{BRS}.
To show \ref{lem:gysin3} consider the closed immersion $i': Z' \to U'=U\setminus(Z\setminus Z')$
and  the open immersion $\tau_U:U'\to U$.
We can  thus compute $g_{Z'/X}$ using the factorization
$Z'\xr{i'} U'\xr{j\tau_U} X$.  The following diagram is clearly commutative 
\[\xymatrix{
j_! \tau_{U!}\tau_U^{-1} i_* G(-c)_Z\ar[r]^-{i_*}\ar[d]&
j_!\tau_{U!} \tau_{U}^{-1} G(0)_U\ar[r]\ar[d]&
G(0)_X\\
j_! i_* G(-c)_Z[-c]\ar[r]^-{i_*}&
j_! G(0)_U,\ar[ur]&
}\]
where the vertical maps are induced by the counit $\tau_!\tau^{-1}\to \id$.
Since the pushforward is compatible with restriction to opens (see \cite[Thm 9.7(3)]{BRS}),
the top row in the diagram is equal to the composition 
\[(j\tau_U i')_!G(-c)_{Z'}\xr{i'_*}(j\tau_U)_! G(0)_{U'}\to G(0)_X.\]
This proves \ref{lem:gysin3}. 
Finally we prove \ref{lem:gysin4}. A factorization \eqref{para:ext-zero1} induces a diagram with cartesian square
\[\xymatrix{
         &   P_U\ar[r]^{j'}\ar[d]^{\pi_U}    & P\ar[d]^{\pi}\\
Z\ar[ur]^{i'}\ar[r]^i & U\ar[r]^j &X,
}\]
with $i$, $i'$ closed - and $j$, $j'$ open immersions. 
By \eqref{para:ext-zero4} the map \eqref{para:ext-zero3} for the diagram
\[\xymatrix{
Z \ar[r]^-{\kappa}\ar[d]^{=} & D\ar[d]^{\pi} \\
Z\ar[r]^-{\iota} & X\\}\]
is given by
\eq{lem:gysin5}{\iota_!=j_!i_*=j_!R\pi_{U*}i'_*\to R\pi_* j'_!i'_*=R\pi_*\kappa_!,}
where the middle map is by adjunction induced from the natural isomorphism $R\pi_{U*}\xr{\simeq} j^{-1}R\pi_* j'_!$.
Consider the following diagram 
\[
\xymatrix{
j_!i_*G(-c)_Z\ar[r]^{i_*}\ar@{=}[d] &
j_!G(0)_U\ar[r] &
G(0)_X\\
j_! R\pi_{U*}i'_* G(-c)_Z\ar[d]\ar[r]^{i'_*}&
j_! R\pi_{U*} G(n)_{P_U}\ar[u]^{\pi_{U*}}\ar[r]\ar[d] &
R\pi_*G(n)_P\ar[u]^{\pi_*}\ar@{=}[d]\\
R\pi_* j'_!i'_*G(-c)_Z\ar[r]^{i'_*}&
R\pi_*j'_!G(n)_{P_U}\ar[r]&
R\pi_*G(n)_P.
}
\]
The square in the top left corner commutes by functoriality of the pushforward (see \cite[Thm 9.7(2)]{BRS});
the square on the top right corner commutes since $\pi_{U*}=j^{-1}(\pi_*)$ 
(see \cite[Thm 9.7(3)]{BRS});
the other squares clearly commute. Thus the big outer square commutes which yields the statement.
\end{proof}

\begin{defn}\label{defn:pfcs}
Let $f:X\to Y$ be a quasi-projective morphism in $\Sm$ of pure relative dimension $r\in \Z$. We fix a compactification
\eq{defn:pfcs1}{f: X\xr{j} \ol{X}\xr{\ol{f}} Y,}
where $j$ is a dense open immersion and $\ol{f}$ is projective.
For $a\ge 0$ with $a+r\ge 0$ and $b\ge 0$ we define a pushforward
\[(\ol{f},j)_*: R\ol{f}_* j_! \gamma^b F\la a+r\ra_X[r]\to \gamma^b F\la a\ra_Y \]
as follows: choose a factorization 
\eq{defn:pfcs2}{ \ol{f}: \ol{X}\xr{i} P\xr{p} Y,}
with $i$ a closed immersion and $p$ projective smooth of relative dimension $n$.
Then $(\ol{f},j)_*$ is defined as the composition
\ml{defn:pfcs3}{(\ol{f},j)_*: R\ol{f}_*j_!\gamma^b F\la a+r\ra_X[r]= Rp_* \iota_!\gamma^b F\la a+r\ra_X[r]\\
\xr{g_{X/P}} Rp_*\gamma^b F\la a+n\ra_P[n] \xr{p_*} \gamma^b F\la a\ra_Y,}
where $\iota=i\circ j: X\inj P$ denotes the immersion, $g_{X/P}$ denotes the associated Gysin map from \ref{defn:gysin},
and the map $p_*$ is the pushforward recalled in \ref{para:proj-pf}.
By Lemma \ref{lem:pfcs} below $(\ol{f},j)_*$ does not depend on the choice \eqref{defn:pfcs2}, however
it might depend on the choice \eqref{defn:pfcs1}. 
\end{defn}

\begin{lem}\label{lem:pfcs}
Assumptions as in Definition \ref{defn:pfcs}.
\begin{enumerate}[label=(\arabic*)]
\item\label{lem:pfcs1} The map $(\ol{f}, j)_*$ is independent of the choice of the factorization \eqref{defn:pfcs2}.
\item\label{lem:pfcs2} Assume $j$ factors  as a composition of open immersions
\[X\xr{\tau} X'\xr{j'}\ol{X}.\]
Then the following diagram commutes:
\[\xymatrix{
R\ol{f}_* j_! \gamma^b F\la a+r\ra_X[r]\ar[r]^-{(\ol{f}, j)_*}\ar[d] & \gamma^b F\la a\ra_Y\\
R\ol{f}_* j'_! \gamma^b F\la a+r\ra_{X'}[r]\ar[ru]_-{(\ol{f}, j')_*}, &
}\]
where the vertical map is induced by the natural transformation
\[j_!\tau^{-1} = j'_!\tau_! \tau^{-1}\to j'_! .\]
\item\label{lem:pfcs3} Assume given a commutative diagram
\[\xymatrix{
X' \ar[r]^{j'}\ar[d]^{g} & \ol{X'}  \ar[r]^{\ol{f'}}\ar[d]^{\ol{g}} & Y\\
X\ar[r]^{j} &\ol{X} \ar[ru]_{\ol{f}} 
}\] 
with $X,X',Y\in \Sm$, $j$, $j'$  dense open immersions and $g$, $\ol{g}$, $\ol{f}$, $\ol{f'}$ projective.
Set $s=\dim X'-\dim X$. Assume $a\ge \max\{0, -r,-r-s\}$. Then the following diagram commutes
\[\xymatrix{
R\ol{f}_*j_! Rg_* \gamma^b F\la a+r+s\ra_{X'}[r+s]\ar[r]^-{g_*}\ar[d]^{\eqref{para:ext-zero3}} &
R\ol{f}_*j_!  \gamma^b F\la a+r\ra_X[r]\ar[d]^{(\ol{f},j)_*}\\
R\ol{f'}_*j'_!  \gamma^b F\la a+r+s\ra_{X'}[r+s]\ar[r]^-{(\ol{f'}, j')_*} &
\gamma^b F\la a\ra_Y,
}\]
where $g_*$ is the pushforward from \eqref{para:proj-pf1}.
\end{enumerate}
\end{lem}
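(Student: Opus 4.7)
The plan is to reduce each assertion to a compatibility between Gysin maps (via Lemma \ref{lem:gysin}) and projective pushforwards (via \cite[Thm 9.7]{BRS}).

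For \ref{lem:pfcs1}, given two factorizations $\ol{X}\xr{i_\alpha}P_\alpha\xr{p_\alpha}Y$ with $p_\alpha$ smooth projective of relative dimension $n_\alpha$ ($\alpha=1,2$), I would form the common refinement $P_{12}:=P_1\times_Y P_2$, smooth projective over $Y$ of relative dimension $n_1+n_2$, with projections $\pi_\alpha:P_{12}\to P_\alpha$. The diagonal $i_{12}:=(i_1,i_2):\ol{X}\inj P_{12}$ yields commutative triangles $\pi_\alpha\circ i_{12}=i_\alpha$. Applying Lemma \ref{lem:gysin}\ref{lem:gysin4} to each of these triangles (with $\iota=i_\alpha\circ j$ and $\kappa=i_{12}\circ j$) rewrites $g_{X/P_\alpha}$ as $R\pi_{\alpha *}g_{X/P_{12}}$ composed with the base change \eqref{para:ext-zero3}. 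Post-composing with $p_{\alpha *}$ and invoking the functoriality $p_{\alpha *}\circ \pi_{\alpha *}=p_{12*}$ from \cite[Thm 9.7(2)]{BRS} then shows that both compositions agree with the pushforward constructed via $P_{12}$, which gives independence.

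For \ref{lem:pfcs2}, I would fix any factorization $\ol{X}\xr{i}P\xr{p}Y$; then $\iota=i\circ j'\circ\tau$ and $\iota':=i\circ j'$ are locally closed immersions into $P$ of the same codimension with $\iota=\iota'\circ\tau$. Lemma \ref{lem:gysin}\ref{lem:gysin3} provides a commutative triangle relating $g_{X/P}$ to $g_{X'/P}$ via the counit $\tau_!\tau^{-1}\to\id$, and post-composing with $p_*$ yields the claimed diagram directly.

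For \ref{lem:pfcs3}, the most delicate part, I would choose compatible factorizations: fix $\ol{X}\xr{i}P\xr{p}Y$ with $p$ smooth projective of relative dimension $n$, and use the projectivity of $\ol{g}:\ol{X'}\to\ol{X}$ to pick an embedding $\ol{X'}\inj\P^m_{\ol{X}}$. This produces the factorization $\ol{X'}\inj\P^m_{\ol{X}}\inj\P^m_P=:P'$ with $p':P'\to Y$ smooth projective of relative dimension $n+m$, which I would use to compute $(\ol{f'},j')_*$. Writing $q:\P^m_P\to P$ for the projection and $\P^m_X:=q^{-1}(X)\cap\P^m_{\ol{X}}$, the locally closed immersion $X'\inj P'$ factors as $X'\inj\P^m_X\inj P'$, where $\P^m_X\inj P'$ is the base change of $X\inj P$ along $q$. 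Three compatibilities then assemble the diagram: Lemma \ref{lem:gysin}\ref{lem:gysin2} factors $g_{X'/P'}$ into the Gysin for $X'\inj\P^m_X$ followed by that for $\P^m_X\inj P'$; Lemma \ref{lem:gysin}\ref{lem:gysin4} applied to the triangle $X'\inj\P^m_X\to X$, together with the construction of projective pushforward recalled in \ref{para:proj-pf}, identifies the composition ``Gysin $X'\inj\P^m_X$ followed by pushforward $\P^m_X\to X$'' with the projective pushforward $g_*$ of \eqref{para:proj-pf1}; and Lemma \ref{lem:gysin}\ref{lem:gysin4} applied to the triangle $\ol{X}\inj P'\xr{q}P$ commutes the remaining Gysin $\P^m_X\inj P'$ with $g_{X/P}$ through $q_*$. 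Combined with $p'_*=p_*\circ q_*$ from \cite[Thm 9.7(2)]{BRS}, this recovers $(\ol{f},j)_*\circ g_*$.

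The main obstacle will be the bookkeeping of the Tate twists $\gamma^b F\la\cdot\ra$ and cohomological shifts $[\cdot]$ through the nested Gysin and projective pushforward compositions in \ref{lem:pfcs3}, as well as checking that the base change morphism $\alpha_\sigma$ of \eqref{para:ext-zero3} intertwines coherently with every Gysin map that appears; its composition law \eqref{para:ext-zero4} should be the decisive input. The hypothesis $a\ge\max\{0,-r,-r-s\}$ is precisely what guarantees every intermediate twist remains non-negative, so that all maps in sight are defined within the formalism of \ref{para:twist}.
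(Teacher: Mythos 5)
Your treatment of parts~\ref{lem:pfcs1} and~\ref{lem:pfcs2} is essentially the paper's own argument: forming $P_1\times_Y P_2$ and invoking Lemma~\ref{lem:gysin}\ref{lem:gysin4} together with the functoriality of the pushforward for~\ref{lem:pfcs1}, and Lemma~\ref{lem:gysin}\ref{lem:gysin3} for~\ref{lem:pfcs2}. Those parts are fine.

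For part~\ref{lem:pfcs3} your overall strategy is close in spirit to the paper's (the paper replaces $P'$ by $P\times_Y P'$ to create a cartesian square over $P$; you instead take $P'=\P^m_P$ coming from an embedding $\ol{X'}\inj\P^m_{\ol{X}}$ — both give a smooth projective $P$-scheme containing a closed copy of $\ol{X'}$, so the choice is legitimate). However, both of your invocations of Lemma~\ref{lem:gysin}\ref{lem:gysin4} fail to meet its hypotheses, and one of them leaves a genuine gap. Lemma~\ref{lem:gysin}\ref{lem:gysin4} requires a commutative triangle in which \emph{both} legs out of $Z$ are locally closed immersions. In your first application, with $Z=X'$, the map $X'\to X$ is $g$, which is projective but not an immersion, so the lemma does not apply; the assertion you want there (that $g_*$ equals the trace for $\P^m_X\to X$ composed with the Gysin for $X'\inj\P^m_X$) is really just the \emph{construction} of the pushforward in \cite[9.5]{BRS} (recalled in~\ref{para:proj-pf}), and should be cited as such. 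In your second application the ``triangle $\ol{X}\inj P'\xr{q}P$'' is not a well-defined triangle of the required shape: there is no natural locally closed immersion $\ol{X}\inj P'=\P^m_P$, and the subscheme you actually need to compare, $\P^m_X=q^{-1}(X)\subset P'$, is not embedded in $P$ either. What you need here is a compatibility of the Gysin construction with smooth projective base change (the cartesian square $\P^m_X\inj P'$ over $X\inj P$ along $q$), and Lemma~\ref{lem:gysin}\ref{lem:gysin4} does not say that. The paper supplies exactly this step by factoring through the cartesian middle square $U'=U\times_P P'$ and invoking \cite[Thm 9.7(3)]{BRS} (compatibility of the pushforward with restriction along smooth maps), together with the functoriality \cite[Thm 9.7(2)]{BRS} and the composition law \eqref{para:ext-zero4} for the base-change map \eqref{para:ext-zero3}. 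Until you replace your second use of Lemma~\ref{lem:gysin}\ref{lem:gysin4} by an argument of that kind, the proof of~\ref{lem:pfcs3} has a hole.

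Two further small points. First, for your use of Lemma~\ref{lem:gysin}\ref{lem:gysin2} you need $X'\inj\P^m_X$ to be a \emph{closed} immersion, not merely locally closed; this is true, but only because $g$ is projective (so that $X'\to X$ is proper and hence $X'\inj\P^m_X$ is a proper immersion), and you should say so. Second, as you yourself flag, the left vertical map in the square of~\ref{lem:pfcs3} is the morphism \eqref{para:ext-zero3}, and the argument has to be organised so that this morphism, and its decomposition via \eqref{para:ext-zero4}, appears explicitly — the paper's $3\times 3$ diagram does precisely this.
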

\begin{proof}
In the proof we set 
\[G(m)_P:= \gamma^b F\la a+m\ra_P[m].\]
\ref{lem:pfcs1}. Let $\ol{X}\xr{i_P} P\xr{p} Y$, and $\ol{X}\xr{i_Q} Q\xr{q} Y$, be two factorizations of $\ol{f}$
with $i_P$ and $i_Q$ closed immersions and $p$ and $q$ smooth projective morphisms of relative dimension $m$ and $n$,
respectively. We obtain the following commutative diagram
\[\xymatrix{
               &                                                        &  P\ar[dr]^{p}       &   \\
 X\ar[r]^j &\ol{X}\ar[ur]^{i_P}\ar[dr]_{i_Q}\ar[r] & PQ\ar[u]^{\pi_P}\ar[d]_{\pi_Q}\ar[r]^{\varrho}  & Y,\\
               &                                                        & Q\ar[ur]_{q}
        }\]
with $PQ=P\times_Y Q$ and $\pi_P$, $\pi_Q$ the projections.
This yields a diagram
\[\xymatrix{
    &   
    Rp_{*}G(m)_{P}\ar[dr]^-{p_*}     & 
    \\
R\ol{f}_*j_!G(r)_X\ar[ur]^-{g_{X/P}}\ar[r]^-{g_{X/PQ}}\ar[dr]_-{g_{X/Q}} &
R\varrho_*G(m+n)_{PQ}\ar[u]^{\pi_{P*}}\ar[d]_{\pi_{Q*}} &
G(0)_Y.\\ 
    &
Rq_*G(n)_{Q}\ar[ur]_-{q_*}&
  \\
}\]
The big triangle on the right hand side commutes by the functoriality of the pushforward, see \cite[Thm 9.7(2)]{BRS} and
the two small triangles on the left hand side commute by Lemma \ref{lem:gysin}\ref{lem:gysin4}.
This implies the statement.

\ref{lem:pfcs2} follows from \ref{lem:pfcs1} and Lemma \ref{lem:gysin}\ref{lem:gysin3}.
Finally \ref{lem:pfcs3}. We choose a factorization $\ol{X'}\to P'\to Y$ of $\ol{f'}$ as in \eqref{defn:pfcs2}.
After replacing $P'$ by $P\times_Y P'$ we obtain a commutative diagram
\[\xymatrix{
X'\ar[d]_g\ar@{^(->}[r]^{i'}\ar[dr]^h & U'\ar[d]^{\pi_U}\ar@{^(->}[r]^{k'} & 
P'\ar[d]_{\pi}\ar[r]^-{p'}& Y\\
X\ar@{^(->}[r]^i & U\ar@{^(->}[r]^k & P\ar[ur]_p & ,
}\]
where $p$, $p'$, $\pi$, $\pi_U$ are smooth projective, $k$, $k'$ are open immersions, $i$, $i'$ are closed immersions,
the square in the middle is cartesian, $h=\pi_U i'=ig$, and 
$\ol{f'}j'=p'k'i'$, $\ol{f}j=pki$.
Let $r$, $s$, $n$, and $n'$ be the relative dimensions of $f$, $g$, $p$, and $p'$, respectively, and set $t:=r+s$.
Consider the following diagram
\[\xymatrix{
Rp_* k_! i_*G(r)_X\ar[r]^{i_*}&
Rp_* k_! G(n)_U\ar[r]^{\rm nat}&
Rp_*G(n)_P\ar[r]^{p_*}&
G(0)_Y\\
Rp_*k_!Rh_* G(t)_{X'}\ar[u]^{g_*}\ar[r]^{i'_*}\ar[d]_{\eqref{para:ext-zero3}}&
Rp_*k_! R\pi_{U*}G(n')_{U'}\ar[r]^{\rm nat}\ar[d]_{\eqref{para:ext-zero3}}\ar[u]_{\pi_{U*}}&
Rp_*R\pi_{*}G(n')_{P'}\ar[ur]_{p'_*}\ar[u]^{\pi_{*}} &
\\
Rp_*R\pi_{*} k'_!i'_*G(t)_{X'}\ar[r]^{i'_*}&
Rp_* R\pi_{*}k'_! G(n')_{U'}.\ar[ur]_{\rm nat} & &
}\]
The top left square and the top right triangle commute  by functoriality of the pushforward
(see \cite[Thm 9.7(2)]{BRS}); the square in the middle of the top row commutes since $\pi_{U*}= k^{-1}(\pi_*)$
(compatibility of the pushforward with restriction along smooth maps, see \cite[Thm 9.7(3)]{BRS});
the bottom row of the diagram clearly commutes. 
Thus the whole diagram commutes. 
Noting $Rp_*k_!Rh_* =R\ol{f}_*j_!Rg_* $,
this yields the statement in view of the definition of
$(\ol{f}, j)_*$ and $(\ol{f}', j')_*$ and \eqref{para:ext-zero4}.
\end{proof}

\section{Recollections on {Par\v{s}in} chains, higher local rings, and cohomology}\label{sec:parsin}
In this section we recall some definitions  and results from \cite[(1.6)]{KS-GCFT}.
Let $X$ be a reduced noetherian separated scheme of dimension $d<\infty$, such that $X^{(d)}=X_{(0)}$.
For $U\to X$ \'etale and a subscheme $T\subset X$, we write $U_T=U\times_X T$.

\begin{para}\label{para:chain}
For $x, y\in X$ we write
\[y<x:\Longleftrightarrow \ol{\{y\}}\subsetneq \ol{\{x\}}, \text{ i.e., } y\in \ol{\{x\}} \text{ and } y\neq x.\]
A {\em chain } on $X$ is a sequence
\eq{para:chain1}{\ux=(x_0,\ldots, x_n)\quad \text{with } x_0<x_1<\ldots <x_n.}
The chain $\ux$ is a {\em maximal Par\v{s}in chain} (of just a {\em maximal chain}) if 
$n=d$ and $x_i\in X_{(i)}$. Note that the assumptions on $X$ imply $x_{i}\in \ol{\{x_{i+1}\}}^{(1)}$.
We denote 
\[\c(X)=\{\text{chains on } X\}\quad \text{and} \quad \mc(X)=\{\text{maximal chains on } X\}.\]

Let $0\le r\le d$. A {\em maximal chain with break at $r$} is a chain \eqref{para:chain1}
with $n=d-1$ and $x_i\in X_{(i)}$, for $i<r$, and $x_i\in X_{(i+1)}$, for $i\ge r$.
We denote
\[\mc_r(X)=\{\text{maximal chain with break at $r$ on $X$}\}.\]
For $\ux=(x_0,\ldots, x_{d-1})\in \mc_r(X)$, we denote by $\b(\ux)$ the set of $y\in X_{(r)}$ such that
\eq{para:chain2}{\ux(y):=(x_0,\ldots, x_{r-1}, y, x_{r},\ldots, x_{d-1})\in \mc(X).}
Note that $\b(\ux)$ depends only on $x_r$ and $x_{r-1}$.
\end{para}

\begin{para}\label{para:hlr}
Let $S\subset X$ be a finite subset contained in an affine open neighborhood of $X$.
A {\em strict Nisnevich neighborhood of $S$} is an \'etale map $u:U\to X $ such that $U$ is affine,  
the base change $u^{-1}(S)\to S$ of $u$ is an isomorphism, and every connected component of $U$ intersects 
$u^{-1}(S)$. If $U\to X$ and $V\to X$ are two strict Nisnevich neighborhoods of $S$ there is at most
one $X$-morphism $ U\to V$, by \cite[Expos\'e, I, Corollaire 5.4]{SGA1}.
By taking a representative for each isomorphism class the category of strict Nisnevich neighborhoods of $S$ in $X$
with morphisms the $X$-morphisms, therefore gives rise to a filtered set.

Let $\ux=(x_0,\ldots, x_n)$ be a chain on $X$. A {\em strict Nisnevich neighborhood of $\ux$} 
is a sequence of maps
\[\sU=(U_n\to\ldots \to U_1\to U_0 \to X=:U_{-1}),\]
such that $U_i\to U_{i-1}$ is a strict Nisnevich neighborhood of 
$U_{i-1, x_i}$ for all $i=0,\ldots, n$.
A morphism $\sV\to \sU$ between two strict Nisnevich neighborhoods of $\ux$
consists of a collection of a  $U_{i-1}$-morphism $V_i\to U_i$ for each $i\ge 0$.
There is again at most one such morphism. Given two such neighborhoods $\sU$ and $\sV$ of $\ux$
we can form a new neighborhood $\sU\times_X \sV$.
As above we obtain a filtered set 
\[N(\ux):=\{\text{strict Nisnevich neighborhoods of }\ux\}.\]
Assume $\ux\in \mc_r(X)$ and $y\in \b(\ux)$. Then we have a map of filtered sets
\eq{para:hlr1}{N(\ux)\to N(\ux(y))}
given by 
\[(U_{d-1}\xr{u_{d-1}} \ldots \xr{u_0} X)\mapsto
(U_{d-1}\xr{u_{d-1}} \ldots \xr{u_{r}} U_{r-1} \xr{\id} U_{r-1}\xr{u_{r-1}} \ldots  \xr{u_0} X).\]

Let $F$ be a presheaf of abelian groups on $X_\Nis$ and $\ux=(x_0,\ldots, x_n)\in \c(X)$.
The Nisnevich stalk of $F$ at $\ux$ is defined to be 
\[\Fh_{\ux}:=\varinjlim_{\sU=(U_n\to\ldots\to X)\in N(\ux)} F(U_n).\]
Note that for $\ux\in \mc_r(X)$ and $y\in \b(\ux)$ the map \eqref{para:hlr1}
induces a natural map
\eq{para:hlr2}{\iota_y : \Fh_{\ux}\to \Fh_{\ux(y)}.}
For $\ux=(x_0,\ldots, x_n)\in \c(X)$, write 
$\sOh_{X,\ul{x}}=\Fh_{\ux}$ with $F=\sO$ and $K^h_{X,\ux}=\Frac(\sOh_{X,\ux})$.
By Lemma \ref{lem:hlr} below, $\sOh_{X,\ul{x}}$ is a finite product of henselian local rings and $K^h_{X,\ux}$ is a finite product of fields.
If $x_n\in X_{(d)}$, then $\sOh_{X,\ux}=K^h_{X,\ux}$. Otherwise, we have
\[K^h_{X,\ux}=\underset{\eta\in X_{(d)},\eta>x_n}{\prod} \sOh_{X, (\ux,\eta)}.\]
For $\ux\in \mc_r(X)$ and $y\in \b(\ux)$ we have by \eqref{para:hlr2} natural maps 
\[K^h_{X, \ux}\to K^h_{X, \ux(y)}, \,\text{for }r<d
\quad \text{and} \quad \sO^h_{X, \ux}\to K^h_{X, \ux(y)}={\rm Frac}(\sO^h_{X,\ux}), \,\text{for } r=d.\]
\end{para}

\begin{lem}\label{lem:hlr}
For $\ux=(x_0,\ldots, x_n)\in\c(X)$ we have 
$\sOh_{X,\ux}=R_n$, where 
\[R_0=\sO_{X,x_0}^h \quad \text{and}\quad  \quad R_{i}=\prod_{\fp\in T_i} R_{i-1,\fp}^h,\quad i\ge 1,\]
with $T_i= \Spec R_{i-1}\times_X \{x_i\}$ the finite set of prime ideals in $R_{i-1}$ lying over the prime ideal  in $\sO_{X,x_0}$ corresponding to $x_i$.
\end{lem}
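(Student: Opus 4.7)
\medskip

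\noindent\textbf{Proof sketch.} The plan is to proceed by induction on the length $n$ of the chain. For $n=0$, the chain is a single point $x_0$ and a strict Nisnevich neighborhood is precisely an étale neighborhood $U\to X$ with $u^{-1}(x_0)\to x_0$ an isomorphism; by the standard description of the Nisnevich stalk one has $\sOh_{X,(x_0)}=\sO_{X,x_0}^h=R_0$, which matches the definition.

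For the inductive step, write $\ux'=(x_0,\ldots,x_{n-1})$ and assume $\sOh_{X,\ux'}=R_{n-1}$. Writing a strict Nisnevich neighborhood of $\ux$ as $(U_n\to U_{n-1}\to\cdots\to X)$, where the tail $(U_{n-1}\to\cdots\to X)$ is a strict Nisnevich neighborhood of $\ux'$ and $U_n\to U_{n-1}$ is a strict Nisnevich neighborhood of $U_{n-1,x_n}$, the definition gives
\[
\sOh_{X,\ux}=\varinjlim_{(U_{n-1}\to\cdots\to X)}\ \varinjlim_{U_n\to U_{n-1}}\sO(U_n).
\]
Since étale morphisms are finitely presented, an étale morphism over $\Spec R_{n-1}=\varprojlim U_{n-1}$ descends to some level $U_{n-1}$, and being a strict Nisnevich neighborhood descends as well. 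Hence the double colimit above agrees with $\varinjlim_{V\to\Spec R_{n-1}}\sO(V)$, where $V$ runs through strict Nisnevich neighborhoods of $T_n=\Spec R_{n-1}\times_X\{x_n\}$ in $\Spec R_{n-1}$.

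Next I invoke the standard fact that for an affine Noetherian scheme $Y=\Spec A$ and a finite set $T=\{\fp_1,\ldots,\fp_k\}$ of primes contained in a common affine open, the colimit $\varinjlim\sO(V)$ over strict Nisnevich neighborhoods $V$ of $T$ computes the semilocalized henselization $\prod_{i}A_{\fp_i}^h$ (this follows from gluing together henselizations at each $\fp_i$ and using that a strict neighborhood of a finite disjoint set of points decomposes into neighborhoods of the individual points). To apply this to $Y=\Spec R_{n-1}$ and $T=T_n$, I need $T_n$ to be finite; by induction $R_{n-1}$ is a finite product of henselian local rings, and for each factor the fiber over $x_n$ consists of finitely many primes because the henselization of a Noetherian local ring is Noetherian with finite fibers over each prime of the base. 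This yields $\sOh_{X,\ux}=\prod_{\fp\in T_n}R_{n-1,\fp}^h=R_n$, completing the induction.

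The supplementary claims about $K^h_{X,\ux}$ are immediate: when $x_n\in X_{(d)}$ each $\fp\in T_n$ is a minimal prime of a $d$-dimensional henselian local factor, and the corresponding localizations are fields, so $\sOh_{X,\ux}=K^h_{X,\ux}$; in general $K^h_{X,\ux}=\Frac(\sOh_{X,\ux})$ is a finite product of the residue fields at the generic points lying over $x_n$, which by the same inductive description are indexed by the $\eta\in X_{(d)}$ with $\eta>x_n$, giving the decomposition $K^h_{X,\ux}=\prod_{\eta>x_n}\sOh_{X,(\ux,\eta)}$. The main technical obstacle is the descent/exchange-of-colimits step, which hinges on the finite presentation of étale morphisms and the finiteness of $T_n$; both inputs are standard once the inductive structure is in place.
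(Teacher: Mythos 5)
Your proof follows essentially the same route as the paper: unwind the definition $\sOh_{X,\ux}=\varinjlim_{\sU\in N(\ux)}\sO(U_n)$ as an iterated colimit along the truncations $\ux_i=(x_0,\ldots,x_i)$, and identify each step with forming a product of henselizations at the finitely many primes over $x_i$. You package this as induction on $n$, whereas the paper writes the iterated colimit all at once; this is only a cosmetic difference. Your extra remarks — that étale maps of finite presentation over $\Spec R_{n-1}=\varprojlim U_{n-1}$ descend to a finite level so the inner colimit is over strict Nisnevich neighborhoods of $T_n$ in $\Spec R_{n-1}$, and that such a colimit computes $\prod_{\fp\in T_n}R_{n-1,\fp}^h$ — are exactly the content the paper leaves implicit.

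One small point to tighten: the finiteness of $T_i$ is the non-trivial input and deserves a precise citation rather than the phrase ``the henselization of a Noetherian local ring is Noetherian with finite fibers over each prime of the base.'' The Noetherianity alone does not directly give finite fibers; what you are really using is that for a Noetherian local ring $A$ the map $\Spec A^h\to\Spec A$ has finite fibers (equivalently, $(A/\fp)^h$ has finitely many minimal primes), which is \cite[Th\'eor\`eme~(18.6.9)]{EGAIV4}, the reference the paper gives. With that citation in place your argument is complete and matches the paper's.
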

\begin{proof}
The finiteness of the $T_i$ holds by \cite[Th\'eor\`eme (18.6.9)]{EGAIV4}.
Set $\ux_i=(x_0,\ldots, x_i)$. As in \eqref{para:hlr1} we obtain maps
$N(\ux_0)\to N(\ux_1)\to \ldots \to N(\ux_n)$. The equality $\sOh_{X,\ux}=R_n$ then follows from 
\[\varinjlim_{\sU\in N(\ux)} \sO(U_n)= 
\varinjlim_{\sV_0\in N(\ux_0)} \varinjlim_{\sV_1\in N(\ux_1)/\sV_0}\ldots
\varinjlim_{\sU\in N(\ux_n)/\sV_{n-1}} \sO(U_n).\]
\end{proof}

\begin{para}\label{para:hlc}
Let $F$ be an abelian Nisnevich sheaf on $X$. For a finite subset $S\subset X$ we write 
$H^i_S(X_\Nis, F)=\varinjlim_{S\subset V\subset X} H^i_{\ol{S}\cap V}(V_\Nis,F)$,
where the colimit is over all open subsets $V$ of $X$ containing $S$ and $\ol{S}$ denotes the closure of $S$ in $X$.

Let $\ux=(x_0,\ldots, x_n)\in \c(X)$. We set 
\[H^i_{\ux}(X, F):= \varinjlim_{\sU=(U_n\to\ldots\to X)\in N(\ux)} H^i_{U_{n,x_n}}(U_{n,\Nis}, F).\]
Assume $x_{n-1}\in \ol{\{x_n\}}^{(1)}$ and write $\ux'=(x_0,\ldots, x_{n-1})$. We define
a map
\eq{para:hlc1}{ \delta_{\ux}: H^i_{\ux}(X, F)\to H^{i+1}_{\ux'}(X,F)}
as follows: set $y:=x_n$ and $x:=x_{n-1}$. 
For $\sU=(U_n\to\ldots\to U_0\to X)\in N(\ux)$, set $V:=U_n$, $U:=U_{n-1}$, and 
set $U_{(x)}:= U\times_X \Spec\sO_{X,x}$
and define $\delta_{\sU}$ as the composition (we drop the index ``$\Nis$'' everywhere)
\[\delta_{\sU}: H^i_{V_y}(V, F)\cong  H^i_{U_{\ol{\{y\}}}\setminus U_x}(U_{(x)}\setminus U_{x},F)
\xr{\delta} H^{i+1}_{U_{x}}(U_{(x)}, F)\cong H^{i+1}_{U_{x}}(U, F),\]
where the first isomorphism is Nisnevich excision plus $\ol{\{y\}}=\{x,y\}\subset \Spec \sO_{X,x}$,
$\delta$ is the connecting homomorphism from the long exact localization sequence, and the last isomorphism is again
excision.  
Then $\delta_{\ux}=\varinjlim_{\sU\in N(\ux)} \delta_{\sU}$.

Let $\ux=(x_0,\ldots, x_d)\in\mc(X)$. As in \cite[Definition 1.6.2(5)]{KS-GCFT} we define
\eq{para:hlc2}{c_{\ux}:=s_{x_0}\circ c_{\ux,0} : F(K^h_{X,\ux}):= \Fh_{\ux}\to H^d(X_{\Nis}, F),}
where $s_{x_0}: H^d_{x_0}(X_{\Nis}, F)\to H^d(X_{\Nis}, F)$ is the forget-support-map
and $c_{\ux, 0}$ is the composition
\ml{para:hlc3}{
c_{\ux, 0} : \Fh_{\ux}=H^0_{(x_0,\ldots, x_d)}(X_{\Nis}, F)
\xr{\delta_{(x_0,\ldots, x_d)}}H^1_{(x_0,\ldots x_{d-1})}(X_{\Nis}, F)
\xr{\delta_{(x_0,\ldots x_{d-1})}}\ldots \\
\ldots\xr{\delta_{(x_0,x_1)}} H^d_{x_0}(X_{\Nis}, F).}
Note that for $\ux\in \mc_d(X)$, 
the composition 
\eq{para:hlc4}{\Fh_{\ux}\xr{\iota_y} \Fh_{\ux(y)}\xr{\delta_{\ux(y)}}
H^1_{\ux}(X_{\Nis}, F)}
is zero, so that $c_{\ux(y)}\circ \iota_y=0$ for all $y\in\b(\ux)$.
\end{para}

\begin{prop}[{\cite[Lemma 1.6.3]{KS-GCFT}}]\label{prop:mapNisCoh}
Let $F$ be an abelian Nisnevich sheaf on $X$. Then for any abelian group $A$ the map
\[\Phi: \Hom(H^d(X_\Nis, F), A)\to \prod_{\ux\in \mc(X)} \Hom(\Fh_{\ux}, A), \quad 
\alpha\mapsto (\alpha\circ c_{\ux})_{\ux\in \mc(X)},\]
is injective. Furthermore, the image of $\Phi$ consists of  those tuples $(\chi_\ux)_{\ux\in\mc(X)}$
satisfying the following condition:
for any $r\in\{0,\ldots, d\}$,  $\ux\in \mc_r(X)$,  $y\in \b(\ux)$ and for any $a\in \Fh_{\ux}$,
we have $\chi_{\ux(y)}(\iota_y (a))=0$ for almost all $y\in \b(\ux)$, and 
\[\sum_{y\in \b(\ux)} \chi_{\ux(y)}(\iota_y(a))=0.\]
\end{prop}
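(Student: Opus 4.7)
The plan is to dualise the statement: construct the homomorphism
\[
\Psi : \bigoplus_{\ux \in \mc(X)} \Fh_\ux \longrightarrow H^d(X_\Nis, F), \qquad a \mapsto c_\ux(a),
\]
and prove that $\Psi$ is surjective with kernel generated by the \emph{break relations}
\[
R_{\ux, a} \,:=\, \sum_{y \in \b(\ux)} \iota_y(a), \qquad \ux \in \mc_r(X),\; a \in \Fh_\ux,\; 0 \le r \le d.
\]
Once this is established, applying $\Hom(-, A)$ to this presentation yields simultaneously the injectivity of $\Phi$ and the description of its image announced in the proposition.

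For the surjectivity of $\Psi$, I would appeal to the coniveau spectral sequence
\[
E_1^{p,q} = \bigoplus_{x \in X^{(p)}} H^{p+q}_x(X_\Nis, F) \Longrightarrow H^{p+q}(X_\Nis, F).
\]
Combined with the bound $\dim X = d$ on the Nisnevich cohomological dimension, the forget-support maps $s_{x_0}$ express $H^d(X_\Nis, F)$ as covered by $\bigoplus_{x_0 \in X_{(0)}} H^d_{x_0}(X_\Nis, F)$ modulo contributions from lower codimensions. For each closed point $x_0$ I would proceed by induction on the number of boundaries in $c_{\ux, 0}$, peeling off one codimension at a time along a chain $x_0 < x_1 < \cdots < x_d$. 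At each stage Nisnevich excision identifies the local cohomology with its henselised version, so the localisation long exact sequence supplies surjectivity of the corresponding connecting map onto the targeted local cohomology. The vanishing \eqref{para:hlc4} generalises verbatim at every intermediate position along the chain, which shows $R_{\ux, a} \in \ker \Psi$; the local finiteness needed for $R_{\ux, a}$ to live in the direct sum follows from Lemma \ref{lem:hlr} and the noetherian hypothesis.

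The main obstacle is the converse: that the break relations \emph{exhaust} $\ker \Psi$. My plan is to encode the situation in a Cousin-style bicomplex indexed by chains of varying length, with horizontal differentials given by the iterated $\delta_\ux$ and vertical differentials given by the sum $\sum_y \iota_y$, with $\Psi$ as its augmentation. The essential exactness to verify is, at each chain length, the exactness of a row, which by Nisnevich excision again reduces to the exactness of a single-point localisation long exact sequence; this is classical. A double diagram chase, working inductively from the closed-point level $E_1^{d,0}$ up to the generic point, then identifies $\ker \Psi$ with the image under the vertical differentials of $\bigoplus_{0 \le r \le d,\, \ux \in \mc_r(X)} \Fh_\ux$---which is exactly the subgroup spanned by the break relations. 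The cohomological input at each level is standard; the combinatorial bookkeeping over all chains and the compatibility of the filtered-colimit description of $\Fh_\ux$ in \ref{para:hlr} with the localisation connecting maps at each stage is the delicate point of the argument.
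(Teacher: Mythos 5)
The paper does not prove this statement: it is quoted verbatim (with notation adapted) from \cite[Lemma 1.6.3]{KS-GCFT} and the paper offers no proof of its own, so there is no in-paper argument to compare against. Your dual reformulation of the statement -- that $\Psi:\bigoplus_{\ux}\Fh_{\ux}\to H^d(X_\Nis,F)$ is surjective with kernel generated by the break relations, so that applying $\Hom(-,A)$ gives both injectivity of $\Phi$ and the description of its image -- is exactly the right way to frame the problem, and is presumably in the spirit of the original proof. However, the proposal has a genuine gap at a specific point.

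The sentence ``The vanishing \eqref{para:hlc4} generalises verbatim at every intermediate position along the chain, which shows $R_{\ux,a}\in\ker\Psi$'' is not correct. Equation \eqref{para:hlc4} treats the case $r=d$: there $y$ is inserted as the \emph{last} entry of the chain, so $\iota_y$ followed by the \emph{first} boundary $\delta_{\ux(y)}: \Fh_{\ux(y)}\to H^1_{\ux}(X_\Nis,F)$ is a composite of two consecutive arrows in a single localisation long exact sequence, hence zero \emph{termwise}; each summand $c_{\ux(y)}(\iota_y(a))$ already vanishes individually. For $r<d$ the inserted point $y$ sits at position $r$, not at the end, so the first boundary of $c_{\ux(y)}$ strips off $x_{d-1}$, and $\iota_y$ followed by $\delta_{\ux(y)}$ is \emph{not} a pair of consecutive maps in any localisation sequence. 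In this range the individual terms $c_{\ux(y)}(\iota_y(a))$ are in general nonzero; only their \emph{sum} vanishes. Proving this is a reciprocity statement: one must show that, after applying $d-r$ boundary maps, the family of classes in $H^{d-r}_{(x_0,\ldots,x_{r-1},y)}(X_\Nis,F)$ indexed by $y\in\b(\ux)$ assembles into the image of a class supported on an open over which $a$ is defined, and then invoke exactness of the localisation sequence at the stage $H^{d-r}\to H^{d-r}\to H^{d-r+1}$ for the chain $(x_0,\ldots,x_{r-1})$. This argument also explains the almost-all-vanishing (the residue of $a$ at $y$ is $0$ once $y$ lies in the open where the representative of $a$ is regular); you attribute the local finiteness to Lemma \ref{lem:hlr} and noetherianness, but $\b(\ux)$ is infinite for $r<d$, so the finiteness is a consequence of this residue argument, not of the structure of $\sOh_{X,\ux}$. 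Finally, the ``Cousin-style bicomplex'' for the converse direction is only a sketch: you do not specify the bidegrees, nor why $\iota_y$ and the iterated $\delta$'s fit into an honest bicomplex (they do not obviously commute or anticommute), so the ``double diagram chase'' is not yet an argument.
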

%
%

\section{Reciprocity Pairings and Zariski-Nagata Purity}
In this section $k$ is perfect. We fix $F\in\RSC_\Nis$ and set
 $\tF=\tau_!\omega^{\CI}F\in \CItspNis$, see \eqref{para:RSC1.5}.
 
 \begin{defn}\label{defn:Fgen}
Let $X$ be a reduced separated scheme of finite type over a function field $K$ over $k$
 and let $D\subset X$ be a closed subscheme,
such that $U=X\setminus D$ is regular (in this case $U$ is also pro-smooth over $k$).
We define
\[F_\gen(X,D):= \Ker\left(F(U)\to \bigoplus_{x\in \tX^{(1)}\cap \nu^{-1}(D)} 
\frac{F(\tX_{x}^h \setminus x)}{\tF(\tX_{x}^h, D^h_{x})}\right),\]
where $\nu :\tilde{X}\to X$ is the normalization, $\tX_{x}^h=\Spec \sO_{\tX,x}^h$, and 
$D^h_{x}= D\times_\tX \tX_{x}^h$.
\end{defn}
The aim of this section is to show that if $X$ is smooth and projective over $K$ and $D_\red$ is a SNCD on $X$,
then $F_{\gen}(X, D)=\tF(X,D)$, i.e., the modulus of an element $a\in F(U)$ is determined by 
the local modulus (or the motivic conductor in the language of \cite{RS}) at the 1-codimensional points of $X$.
If the support of $D$ is smooth, this also follows from \cite[Corollary 8.6(2)]{S-purity} and if $D$ is reduced, it follows
from \cite[Theorem 2.3]{SaitologRSC}. The proof of the  general case given here is completely different.

\begin{lem}\label{lem:Fgen-fp}
Let $X$ and $Y$ be separated and finite type schemes over (maybe different) function fields over $k$ and let 
$f:Y\to X$ be a {\em flat} morphism. Let $D\subset X$ be a closed subscheme such that $U=X\setminus D$ and $V=Y\setminus f^{-1}(D)$ are regular.
Then the pullback $f^*: F(U)\to F(V)$ induces a morphism
\[f^*: F_{\gen}(X,D)\to F_{\gen}(Y,f^{-1}(D)).\]
\end{lem}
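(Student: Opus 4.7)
The plan is to reduce the verification of $f^*a \in F_{\gen}(Y, f^{-1}(D))$ for $a \in F_{\gen}(X,D)$ to a local statement at each codimension-one point $y \in \tilde{Y}^{(1)} \cap \nu_Y^{-1}(f^{-1}(D))$, and then invoke the functoriality of $\tF$ under morphisms in $\uMCor$. In view of the identity $\tF(\Spec \sO_L, n\cdot s_L) = \{b \in F(L) : c^F_L(b) \le n\}$, the condition defining $F_\gen$ is a numerical bound on the motivic conductor at each codimension-one point of $\tilde{X}$ lying over $D$, so it suffices to build, for each such $y$, a matching codimension-one point $x' \in \tilde{X}^{(1)} \cap \nu_X^{-1}(D)$ through which the local pullback factors.

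To construct $x'$: the scheme $\Spec \sO^h_{\tilde{Y}, y}$ is a henselian DVR whose generic point dominates a generic point of $X$ by flatness of $f$, so the universal property of normalization produces a factorization $\tilde{g}: \Spec \sO^h_{\tilde{Y},y} \to \tilde{X}$ of the composite $\Spec \sO^h_{\tilde{Y},y} \to \tilde{Y} \to Y \to X$. Setting $x' := \tilde{g}(y)$, the induced local ring map $\sO_{\tilde{X}, x'} \to \sO^h_{\tilde{Y},y}$ factors uniquely through the henselization $\sO^h_{\tilde{X}, x'}$, yielding a morphism $g: \tilde{Y}^h_y \to \tilde{X}^h_{x'}$. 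The flat dimension formula applied to $f$ at $\nu_Y(y) \in Y^{(1)}$ shows that $f(\nu_Y(y)) \in X$ has codimension at most one, and the hypothesis $f(\nu_Y(y)) \in D$ (combined with the standing assumption that $D$ is nowhere dense in $X$, without which the statement of the lemma is vacuous) forces codimension exactly one. The finiteness of $\nu_X$ then places $x'$ in $\tilde{X}^{(1)}$ lying over $D$.

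With $g$ in hand, the admissibility condition \eqref{para:mod1} is automatic for the morphism of pro-modulus pairs
\[g : (\tilde{Y}^h_y, g^*D^h_{x'}) \longrightarrow (\tilde{X}^h_{x'}, D^h_{x'}),\]
so $g^* \tF(\tilde{X}^h_{x'}, D^h_{x'}) \subset \tF(\tilde{Y}^h_y, g^*D^h_{x'})$. Two final identifications close the argument: $g^*D^h_{x'} = (f^{-1}D)^h_y$ because both sides equal the pullback of the Cartier divisor $D$ along the common composite $\tilde{Y}^h_y \to X$, and $g^*(a_{|K_{x'}}) = f^*(a)_{|L_y}$ in $F(L_y)$ for the same reason applied to the section $a$; the hypothesis $a_{|K_{x'}} \in \tF(\tilde{X}^h_{x'}, D^h_{x'})$ supplied by $a \in F_\gen(X,D)$ then transports via $g^*$ to the desired conclusion at $y$.

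The main delicate point is the identification $x' \in \tilde{X}^{(1)}$: without flatness of $f$ the image in $X$ of a codimension-one point of $Y$ can have arbitrary codimension, and the modulus condition at $x'$ would then carry no useful information. Once $g$ is constructed, the remainder is a formal application of the functoriality of $\tF$ on $\uMCor$ and of the compatibility of Cartier divisor pullback with the two factorizations of $\tilde{Y}^h_y \to X$ through $\tilde{Y}$ and through $\tilde{X}^h_{x'}$.
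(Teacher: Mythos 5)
Your proof is correct and takes essentially the same approach as the paper's. The paper's argument consists of precisely the key observation you also make---the flat dimension formula $\dim \sO_{Y,y} = \dim \sO_{X,f(y)} + \dim(\sO_{Y,y}/\fm_{f(y)}\sO_{Y,y})$, which forces a codimension-one point of $Y$ to map to a point of codimension at most one in $X$---followed by ``the statement follows''; you carry out the bookkeeping the paper leaves implicit (the passage to normalizations, the construction of the map of henselian DVRs via the universal properties of normalization and henselization, the identification of the pulled-back divisors, and the functoriality of $\tF$).
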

\begin{proof}
Let $y\in Y^{(1)}$ and $x=f(y)$. Since $f$ is flat we have $\dim \sO_{Y,y}= \dim\sO_{X,x} + \dim (\sO_{Y,y}/\fm_x\sO_{Y,y})$.
Thus a 1-codimensional point in $Y$ can only be mapped  to a $0$- or $1$-codimensional point in $X$. The statement follows.
\end{proof}

\begin{para}\label{para:KM}
Let $X$ be a $k$-scheme. We denote by $K^M_{r, X}$ ($r \ge 0$) the Nisnevich sheafification 
of the improved Milnor K-theory from \cite{Kerz}. Let $D\subset X$ be a closed subscheme  and denote by 
$j: U:=X\setminus D\inj  X$  (resp. $i:D\inj  X$) the corresponding open (resp. closed) immersion. We consider the following Nisnevich sheaves for $r\ge 1$
\[V_{r,X|D}:=\Im(\sO_{X|D}^\times\otimes_{\Z}K^M_{r-1, X}\to K^M_{r,X})
\;\;\text{ with } \sO_{X|D}^\times:= \Ker(\sO_X^\times\to i_*\sO_D^\times).\]
Note $V_{1, X|D}=\sO_{X|D}^\times$.
Let $I\subset \sO_X$ denote the ideal sheaf of $D\inj X$. 
In \cite[(1.3)]{KS-GCFT} the sheaf $K^M_{r}(\sO_X, I)=\Ker(K^M_{r, X}\to K^M_{r, D})$ is considered.
The inclusion  $V_{r, X|D}\subset K^M_r(\sO_X,I)$  is an equality for $r=1$ and for $r\ge 2$ it induces an equality between 
the Nisnevich stalks at points $x\in X$ with infinite residue field
(this follows from \cite[Lemma 1.3.1]{KS-GCFT} and \cite[Proposition 10(5)]{Kerz}).
In particular, if $X$ has dimension $d$, Grothendieck-Nisnevich vanishing implies that the natural map
\eq{para:KM1}{H^d(X_{\Nis}, V_{d, X|D})\xr{\simeq} H^d(X_{\Nis}, K^M_d(\sO_X, I))}
is an isomorphism.
Now assume $X$ is of finite type and pure dimension $d$ over $k$.
Let $U\subset X$ be a regular dense open subscheme. 
For $x\in U_{(0)}$ the Gersten resolution (\cite[Proposition 10(8)]{Kerz})  yields an isomorphism 
\eq{para:CHM2}{\theta_x: \Z\xr{\simeq} H^d_x(U_{\Nis}, K^M_{d, U})
\cong H^d_x(X_{\Nis}, V_{d, X|D}).}
By \cite[Theorem 2.5]{KS-GCFT} and \eqref{para:KM1}, we obtain a surjective map
\eq{para:CHM3}{\theta=\sum_x \theta_x: Z_0(U)=\bigoplus_{x\in U_{(0)}} \Z
\surj H^d(X_{\Nis}, V_{d, X|D}).}

\begin{prop}[\cite{RSCycleMap}]\label{prop:cyle-map}
Let the notation be as above and assume additionally that  $X$ is smooth of pure dimension $d$ and that the support of $D$ is a simple normal crossing divisor.
Then $\eqref{para:CHM3}$ factors to give a surjective  map
\[\CH_0(X|D)\surj H^d(X_{\Nis}, V_{d, X|D}).\]
Here, $\CH_0(X|D)$ is the Chow group of zero-cycles with modulus introduced in \cite{Kerz-Saito}, see the introduction.
\end{prop}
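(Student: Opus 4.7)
By the description of $\CH_0(X|D)$ recalled in the introduction, it suffices to show $\theta(\div_{\widetilde{C}}(f)) = 0$ in $H^d(X_\Nis, V_{d, X|D})$ for every integral curve $C\subset U$, where $\phi:\widetilde{C}\to X$ denotes the composition of the normalization $\widetilde{C}\to \overline{C}$ of the closure with the closed immersion $\overline{C}\hookrightarrow X$, and every $f\in K(\widetilde{C})^\times$ with $f\equiv 1\pmod{D_{\widetilde{C}}}$, where $D_{\widetilde{C}}:=\phi^*D$ (well-defined since $C\not\subset D$). The strategy is to factor $\theta\circ\phi_*$ through cohomology on the curve.

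On the smooth projective curve $\widetilde{C}$ one has $V_{1,\widetilde{C}|D_{\widetilde{C}}}=\sO^\times_{\widetilde{C}|D_{\widetilde{C}}}$, and the short exact sequence
\[
0\to \sO^\times_{\widetilde{C}|D_{\widetilde{C}}} \to \sO^\times_{\widetilde{C}} \to i_*\sO^\times_{D_{\widetilde{C}}}\to 0,
\]
together with the vanishing of $H^1(\widetilde{C}_\Nis, i_*\sO^\times_{D_{\widetilde{C}}})$, identifies $H^1(\widetilde{C}_\Nis, V_{1,\widetilde{C}|D_{\widetilde{C}}})$ with the relative Picard group $\Pic(\widetilde{C}, D_{\widetilde{C}})$. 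By the classical Rosenlicht-type description of this group, the curve-level analogue of $\theta$, namely the surjection $Z_0(\widetilde{C}\setminus D_{\widetilde{C}})\twoheadrightarrow H^1(\widetilde{C}_\Nis, V_{1,\widetilde{C}|D_{\widetilde{C}}})$ from \eqref{para:CHM3}, already factors through $\CH_0(\widetilde{C}|D_{\widetilde{C}})$; in particular it kills $\div_{\widetilde{C}}(f)$.

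It therefore suffices to construct a pushforward
\[
\phi_*: H^1(\widetilde{C}_\Nis, V_{1,\widetilde{C}|D_{\widetilde{C}}}) \to H^d(X_\Nis, V_{d, X|D})
\]
fitting into a commutative diagram
\[
\xymatrix{
Z_0(\widetilde{C}\setminus D_{\widetilde{C}}) \ar[r]\ar[d]_{\phi_*} & H^1(\widetilde{C}_\Nis, V_{1,\widetilde{C}|D_{\widetilde{C}}}) \ar[d]^{\phi_*}\\
Z_0(U) \ar[r]^-{\theta} & H^d(X_\Nis, V_{d, X|D}),
}
\]
in which the left vertical arrow is the usual cycle pushforward $[y]\mapsto [\kappa(y):\kappa(\phi(y))]\,[\phi(y)]$ and the compatibility at closed points $y\in \widetilde{C}\setminus D_{\widetilde{C}}$ reflects the Gersten isomorphisms \eqref{para:CHM2}. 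Such a $\phi_*$ is the subject of \cite{RSCycleMap}, built from the transfer structure on the improved Milnor K-sheaves of \cite{Kerz} together with the classical Kato-Rost residue formulas. Granting it, the proof concludes at once: the class of $\div_{\widetilde{C}}(f)$ in $H^1(\widetilde{C}_\Nis, V_{1,\widetilde{C}|D_{\widetilde{C}}})$ vanishes by the previous paragraph, so chasing the diagram yields $\theta(\div_{\widetilde{C}}(f))=\phi_*(0)=0$.

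The main obstacle is precisely the construction of $\phi_*$ and the verification of the local compatibility: since $\widetilde{C}$ is in general not a global complete intersection in $X$, one has to work Nisnevich-locally on $X$, factor $\phi$ after base change as a regular closed immersion into a smooth $X$-scheme followed by a smooth projection, define local Gysin-type pushforwards on the modulus Milnor K-sheaves using the improved K-theory transfers, and glue these coherently. Once the machinery of \cite{RSCycleMap} is in place, the remainder of the argument is essentially formal.
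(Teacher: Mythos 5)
The paper gives no proof of this statement: it is presented as a result imported from \cite{RSCycleMap}, and the bracketed citation in the proposition header is the ``proof.''  So there is no in-text argument to compare against.

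Your reduction is organized correctly.  It is true that $\theta$ factors through $\CH_0(X|D)$ iff it kills every $\phi_*\div_{\widetilde C}(f)$ with $f\equiv 1\pmod{D_{\widetilde C}}$, and your identification of $H^1(\widetilde C_\Nis,V_{1,\widetilde C|D_{\widetilde C}})$ with the relative Picard group, together with the classical Rosenlicht description showing that the curve-level $\theta$ kills $\div_{\widetilde C}(f)$, is sound.  You also correctly locate the crux: a pushforward $\phi_*$ on cohomology of the modulus Milnor $K$-sheaves, compatible with the Gersten identifications at closed points, along the finite (not necessarily immersive) map $\phi:\widetilde C\to X$.  But you then state explicitly that you are \emph{not} constructing $\phi_*$, that doing so and verifying the compatibility is ``the subject of \cite{RSCycleMap},'' and that ``granting it'' the argument closes.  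Since $V_{r,X|D}$ is not the restriction to $X_\Nis$ of a reciprocity sheaf on $\Sm$, the pushforward machinery of Section~\ref{sec:pfcs} does not apply directly, and the existence and coherence of the needed Gysin maps for these modulus $K$-sheaves is genuinely the whole content of the proposition.  In short: you have given a correct and clarifying \emph{reduction} to the main lemma of \cite{RSCycleMap}, not a proof.  That said, the paper itself does exactly the same thing (cites the reference and moves on), so within the rules of this exercise your submission mirrors the paper's treatment; just be aware that the most substantive step remains an unproved black box in your write-up.
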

\end{para}


\begin{para}\label{para:pairing}
Let $U\to S$ be  a quasi-projective dominant morphism in $\Sm$, with  $U$ and $S$ integral.
Set $d=\dim U-\dim S$.
We fix a compactification
\[U\xr{j} X\xr{f} S\]
with $j$ a dense open immersion, $f$ projective, and $X$ integral.
There is a natural map of abelian Nisnevich sheaves on $U$ (see \ref{para:twist}\ref{para:twist3.3})
\eq{para:pairing0}{F_U\otimes_\Z K^M_{d,U}\xr{\rm nat.} (F\otimes_{\NST} K^M_d)_U\to F\la d\ra_U}
inducing
\[K^M_{d,U}\to \uHom_{{\rm Sh}(U_\Nis)}(F_{U}, F\la d\ra_{U})= j^{-1}\uHom_{{\rm Sh}(X_\Nis)}(j_*F_U, j_!F\la d\ra_U).\]
By adjunction we obtain the following morphism in ${\rm Sh}(X_\Nis)$
\eq{para:pairing1}{j_*F_U\otimes_\Z j_!K^M_{d,U}\to j_!F\la d\ra_U.}
We define  the pairing 
\eq{para:pairing2}{(-,-)_{U\subset X/S}:F(U)\otimes_\Z H^d(X_\Nis, j_!K^M_{d,U})\to F(S)}
as the composition:
\mlnl{F(U)\otimes_\Z H^d(X_\Nis, j_!K^M_{d,U})\to H^d(X_\Nis, j_*F_U\otimes_\Z j_!K^M_{d,U})\\
\xr{\eqref{para:pairing1}} H^d(X_\Nis, j_!F\la d\ra_U)\xr{(f,j)_*} F(S),}
where $(f,j)_*$ is the pushforward from Definition \ref{defn:pfcs} and
the first map sends $a\in F(U)$ and $\beta\in H^d(X_\Nis, j_!K^M_{d,U})$ corresponding to
maps $a:\Z_X\to j_*F_U$ and $\beta: \Z_X\to j_!K^M_{d,U}[d]$ in $D(X_\Nis)$, respectively,
to $a\otimes \beta: \Z_X\to (j_*F_U\otimes_{\Z} j_!K^M_{d,U})[d]$. 
\end{para}

\begin{lem}\label{lem:pair-cor}
Let the assumptions be as above. Let $Z\subset U$ be an integral closed subscheme which is finite and surjective over
$S$. By abuse of notation, we also denote by $Z$ the finite correspondence in $\Cor(S,U)$ defined by the image of the
natural map $Z\to S\times U$ and we denote by $[Z]$ the image of $1\in \Z$ under the following map
\[\Z\xr{\theta_Z,\, \simeq} H^d_{Z}(U_\Nis, K^M_{d,U})=H^d_{Z}(X_\Nis, j_!K^M_{d,U})\to
H^d(X_\Nis, j_!K^M_{d,U}),\] 
where the isomorphism $\theta_Z$ is induced by the Gersten resolution, noting $\codim_U(Z)=d$.
Then for $a\in F(U)$
\[(a, [Z])_{U\subset X/S}= Z^*a \quad \text{in } F(S).\]
If $Z$ is furthermore smooth  and we denote by $i: Z\inj U$ the closed immersion and $g: Z\to S$ the finite 
and surjective map, then
\eq{lem:pair-cor1}{(a, [Z])_{U\subset X/S}=g_*i^*a,}
with $g_*$ as in \eqref{para:proj-pf1}.
\end{lem}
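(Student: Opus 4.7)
The plan is to first prove the smooth case (formula \eqref{lem:pair-cor1}), since this is the substantive content, and then note how the general case $(a,[Z])_{U\subset X/S}=Z^*a$ follows by reduction. The core idea is to factor the pairing through local cohomology with support on $Z$, identify the resulting class as the Gysin image of $i^*a$, and apply the functoriality of the pushforward from Lemma \ref{lem:pfcs}\ref{lem:pfcs3}.

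First, I would observe that since $g:Z\to S$ is finite (hence proper) and $X\to S$ is proper, the composite $Z\inj U\xr{j}X$ is a proper monomorphism, hence a closed immersion $\iota:=j\circ i$. This gives canonical identifications $H^d_Z(X_\Nis,j_!K^M_{d,U})=H^d_Z(U_\Nis,K^M_{d,U})$, and similarly for $j_!F\la d\ra_U$. By definition, $[Z]$ is the image of $\theta_Z(1)\in H^d_Z(X_\Nis,j_!K^M_{d,U})$ under the forget-support map, so the pairing $(a,[Z])_{U\subset X/S}$ is computed in three steps: form the cup product $\xi_a:=a\cup\theta_Z(1)\in H^d_Z(U_\Nis,F\la d\ra_U)$ using \eqref{para:pairing1}, forget supports to land in $H^d(X_\Nis,j_!F\la d\ra_U)$, and apply $(f,j)_*$.

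The key technical step, assuming $Z$ smooth, is the identification
\[\xi_a=i_*(i^*a)\quad\text{in }H^d_Z(U_\Nis,F\la d\ra_U),\]
where $i_*$ is the Gysin morphism from Definition \ref{defn:gysin} with parameters $(a,b,c)=(d,0,d)$. This is a projection-formula-type statement expressing compatibility between the Gysin map for $F\la d\ra$ and the pairing \eqref{para:pairing0}. Concretely, both $\theta_Z(1)$ and the Gysin image arise from the same construction in \cite{BRS} via local embedding of $Z$ into a projective bundle over $U$ and the projective bundle formula; the pairing \eqref{para:pairing0} is natural with respect to this construction, giving a commutative square
\[
\xymatrix{
\Z=K^M_{0,Z}(Z)\ar[r]^-{\theta_Z}\ar[d]_{\cdot\,i^*a} & H^d_Z(U_\Nis,K^M_{d,U})\ar[d]^{a\,\cup\,-}\\
F(Z)\ar[r]^-{i_*} & H^d_Z(U_\Nis,F\la d\ra_U).
}
\]
Evaluating at $1\in\Z$ yields $\xi_a=i_*(i^*a)$.

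Finally, I would apply Lemma \ref{lem:pfcs}\ref{lem:pfcs3} to the diagram with top row $Z\xr{i}U\xr{j}X$ and bottom row $Z\xr{\id}Z\xr{g}S$ (using that $Z$ is already proper over $S$, so no compactification is needed on the bottom), with relative dimensions $r=d$ and $s=-d$ and twist data $a=b=0$. This shows that the composition
\[F(Z)\xr{i_*}H^d(X_\Nis,j_!F\la d\ra_U)\xr{(f,j)_*}F(S)\]
coincides with the pushforward $g_*$ from \eqref{para:proj-pf1}. Combined with the identification $\xi_a=i_*(i^*a)$, this proves \eqref{lem:pair-cor1}. The general case for $Z$ not necessarily smooth should follow by reducing to the smooth situation: $Z$ is generically smooth since $k$ is perfect, and both sides of the desired equality behave well under proper birational modifications of $Z$ and under smooth base change on $S$, enabling a reduction via the transfer structure of $F\in\RSCNis$. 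The main obstacle in the proof is verifying the commutativity of the square above — this compatibility between the Gysin map, the Gersten fundamental class, and the pairing \eqref{para:pairing0} requires tracing through the construction of $i_*$ in \cite{BRS} and checking at each step (embedding, projective bundle formula, projection) that the $K^M_d$-module structure is preserved.
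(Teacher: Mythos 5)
Your proposal takes essentially the same route as the paper: in the smooth case you identify the cup product $a\cup\theta_Z(1)$ with the Gysin image $i_*(i^*a)$ in $H^d_Z(U_\Nis,F\la d\ra_U)$ via a projection-formula-type compatibility (the paper invokes \cite[Theorem 7.11]{BRS} for exactly this), and you then appeal to Lemma \ref{lem:pfcs}\ref{lem:pfcs3} for the ``lower triangle'' identifying $(f,j)_*\circ i_*$ with $g_*$. Both steps match the paper. Two small points are underdeveloped, though. First, the reduction from a general integral $Z$ to the smooth case is handled in the paper by a much cleaner device: since $F\in\RSCNis$, the restriction $F(S)\to F(V)$ is injective for any dense open $V\subset S$ (\cite[Theorem 3.1]{S-purity}), so one shrinks $S$ around its generic point, over which $Z$ becomes smooth because $k$ is perfect and $g$ is finite. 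Your suggested mechanism via ``proper birational modifications of $Z$'' is not quite the right move — modifying $Z$ changes the correspondence and the cycle class — and isn't needed; the shrink-$S$ argument is what one should use. Second, to get the first assertion $(a,[Z])_{U\subset X/S}=Z^*a$ from \eqref{lem:pair-cor1} in the smooth case, you still need the chain $Z^*a=(\Gamma_g^t)^*\Gamma_i^*a=g_*i^*a$, which uses that the transfer pushforward $(\Gamma_g^t)^*$ agrees with the pushforward $g_*$ of \eqref{para:proj-pf1}, a nontrivial compatibility from \cite[Proposition 8.10(3)]{BRS}; this link is absent from your write-up and should be made explicit. Neither point is a fatal gap, but the reduction step as you describe it would not go through as written.
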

\begin{proof}
Since the restriction map  $F(S)\to F(V)$ is injective for a dense open $V\subset S$ (see \cite[Theorem 3.1]{S-purity}), 
we can shrink $S$ around its generic point and therefore may assume that $Z$ is smooth.
Let $\Gamma_i\in \Cor(Z, U)$ and $\Gamma_g^t\in \Cor(S, Z)$ be  the (transpose of the) graph of 
$i$ and $g$ from the second part of the statement, respectively.
We have 
\[Z^*a= (\Gamma_i\circ \Gamma_g^t)^* a= (\Gamma_g^t)^*\Gamma_i^* a= g_*i^*a,\]
where for the last equality we use that 
$g_*: F(Z)\to F(S)$ from \eqref{para:proj-pf1} is equal to $(\Gamma_g^t)^*$ by \cite[Proposition 8.10(3)]{BRS}.
Thus it remains  to show \eqref{lem:pair-cor1}. To this end, consider the diagram
\[\xymatrix{
F(U) \ar[r]^-{\cup \theta_Z(1)} \ar[d]_{i^*} & H^d_Z(U_\Nis,F\la d\ra) \ar[d]\\
F(Z) \ar[ru]^-{i_{Z_*}} \ar[rd]_-{g_*} &  H^d(X_\Nis,j_! F\la d\ra_U )\ar[d]^{(f,j)_*}\\
& F(S)}\]
where $i_{Z_*}$ is induced by $H^d_Z$ of the map \eqref{para:proj-pf1}, where
$f=i$, $r=-d$, $a=d$, $b=0$, 
and $\cup \theta_Z(1)$ is induced by the pairing
\[ F(U) \times H^d_Z(U_\Nis,K^M_{d,U}) \to H^d_Z(U_\Nis,F\la d\ra_U).\]
Since by definition $i_{Z_*}$ is induced by the Gysin map from \cite[7.4]{BRS},
the top triangle commutes by Theorem 7.11 of {\em loc. cit.}
The lower triangle is commutative by Lemma \ref{lem:pfcs}\ref{lem:pfcs3}.
The equality \eqref{lem:pair-cor1} follows from this.
\end{proof}

\begin{prop}\label{prop:mod-K-pairing}
Let $K$ be a function field over $k$. 
Let $X$ be a reduced and projective  $K$-scheme of pure dimension $d$ and $D\subset X$ a nowhere dense 
closed subscheme, such that $U=X\setminus D$ is regular. Denote by $j:U\inj X$ the open immersion. Then the natural map
$H^d(X_\Nis, j_!K^M_{d,U})\to H^d(X_\Nis, V_{d, X|D})$ is surjective (see \ref{para:KM} for notation)
and there is a unique pairing 
\eq{prop:mod-K-pairing1}{(-,-)_{(X,D)/K}:F_\gen(X,D)\otimes H^d(X_\Nis, V_{d, X|D})\to F(K)}
such that the following diagram commutes
\[\xymatrix{
F(U) \ar[r]^-{\eqref{para:pairing2}} &\Hom( H^d(X_\Nis, j_!K^M_{d, U}),  F(K))\\
F_\gen(X,D)\ar@{^(->}[u]\ar[r]^-{\eqref{prop:mod-K-pairing1}} & \Hom( H^d(X_\Nis, V_{d, X|D}),F(K)).\ar@{^(->}[u]
}\]
\end{prop}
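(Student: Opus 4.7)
The plan is to handle the two assertions separately using three inputs: the surjection $Z_0(U) \twoheadrightarrow H^d(X_\Nis, V_{d,X|D})$ from \eqref{para:CHM3}, the Par\v{s}in-chain description of cohomology from Proposition \ref{prop:mapNisCoh}, and the continuity statement of Theorem \ref{thm:cont}.

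The surjectivity of the natural map $H^d(X_\Nis, j_! K^M_{d,U}) \to H^d(X_\Nis, V_{d,X|D})$ comes from factoring \eqref{para:CHM3}: for each $x \in U_{(0)}$ the isomorphism $\theta_x$ of \eqref{para:CHM2} identifies via Nisnevich excision with the forget-supports map $H^d_x(X_\Nis, j_! K^M_{d,U}) \to H^d(X_\Nis, j_! K^M_{d,U})$, then composed with the natural map to $H^d(X_\Nis, V_{d,X|D})$. Hence the latter is surjective and the uniqueness of \eqref{prop:mod-K-pairing1} follows immediately.

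For existence I would first observe, by examination of stalks as in \ref{para:KM}, that $j_! K^M_{d,U} \hookrightarrow V_{d,X|D}$ with quotient $Q$ supported on $D$. Since $\dim D < d$ we have $H^d(X_\Nis, Q) = 0$, so the kernel of the surjection above is the image of $\partial : H^{d-1}(X_\Nis, Q) \to H^d(X_\Nis, j_! K^M_{d,U})$. Thus the existence of the factored pairing amounts to the vanishing $(a, \partial \beta)_{U \subset X/K} = 0$ for all $a \in F_\gen(X,D)$ and $\beta \in H^{d-1}(X_\Nis, Q)$. To verify this I would view $(a,-)_{U\subset X/K}$ via Proposition \ref{prop:mapNisCoh} as a compatible family $(\chi_\ux)_{\ux \in \mc(X)}$ of maps $\chi_\ux : (j_! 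K^M_{d,U})^h_\ux \to F(K)$, and use the analogous Par\v{s}in presentation of $H^{d-1}(X_\Nis, Q) \cong H^{d-1}(D_\Nis, i^*Q)$ to reduce the vanishing to a chain-by-chain check at maximal chains $\ux = (x_0 < \cdots < x_d)$ of $X$ with $x_{d-1}$ a generic point of $D$ in $X^{(1)}$; the check being that $\chi_\ux$ extends to the larger stalk $(V_{d,X|D})^h_\ux$ and that the extensions assemble into a family satisfying the reciprocity relations of Proposition \ref{prop:mapNisCoh} now for the sheaf $V_{d,X|D}$.

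The main obstacle will be this local extendability at each chain $\ux$. Unwinding $\chi_\ux$ inductively along the chain via the $\gamma^b F\la a \ra$-twists of \ref{para:twist} and the pushforward-with-compact-supports machinery of Section \ref{sec:pfcs}, the extension across the critical step at $x_{d-1}$ becomes an instance of Theorem \ref{thm:cont}, applied with $L$ the henselian discrete valuation field at $x_{d-1}$ and with the modulus bound $c^F_L(a|_L) \le v_{x_{d-1}}(D)$ supplied by the hypothesis $a \in F_\gen(X,D)$. Once local extendability is in hand, the global reciprocity conditions for the extended family should follow essentially formally from the compatibility of the Par\v{s}in boundary maps with the connecting map $\partial$ attached to the short exact sequence defining $Q$.
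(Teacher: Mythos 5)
Your overall strategy is the same one the paper uses: surjectivity from the fact that the quotient of $V_{d,X|D}$ by $j_!K^M_{d,U}$ is supported in $D$ hence has vanishing $H^d$, then existence via Proposition \ref{prop:mapNisCoh} reduced to a local vanishing supplied by Theorem \ref{thm:cont}. However, there are two genuine gaps.

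First, you never address the reduction to the case where $X$ is normal. The proposition only assumes $X$ reduced and projective, but your argument implicitly treats $\sOh_{X,\ux}$ at a chain $\ux\in\mc_d(X)$ as a henselian discrete valuation ring, which requires $X$ to be regular in codimension one; moreover $F_\gen(X,D)$ is defined through the normalization $\tX$, so the pairing on $F(U)$ has to be compared across $\nu:\tX\to X$ before the Par\v{s}in-chain machinery applies. The paper handles this first by reducing to $\tX$ via the isomorphism $j_!\cong\nu_*\tilde j_!$ of \eqref{para:ext-zero2} and the pullback $(\nu^*)^\vee$ on duals; without this step your chain-by-chain reduction is not set up correctly.

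Second, the mechanism for the local check is not quite as you describe. You phrase it as ``$\chi_\ux$ extends to the larger stalk,'' but for maximal chains $\uy=\ux(\eta)$ the stalks of $j_!K^M_{d,U}$ and $V_{d,X|D}$ already agree (both equal $K^M_d(K^h_{X,\uy})$); what is new is the reciprocity condition at the break chain $\ux\in\mc_d(X)$, namely that the composition $(V_{d,X|D})^h_\ux\xr{\iota_\eta}K^M_d(K^h_{X,\uy})\xr{\chi_{a,\uy}}F(K)$ vanishes. To verify this the paper does not ``unwind inductively along the chain''; instead it writes $\chi_{a,\uy}$ as $(f,j)_*\circ c_\uy\circ(\cup\,a_\uy)$, uses Lemma \ref{lem:pfcs}\ref{lem:pfcs2} to replace $j$ by a larger open $j':V\inj X$ with $U\cup X^{(1)}\subset V$, and then observes that $c_\uy$ applied to elements of $F\la d\ra(\sOh_{X,\ux})$ vanishes by \eqref{para:hlc4} because $(j'_!F\la d\ra_V)^h_\ux=F\la d\ra(\sOh_{X,\ux})$. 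What remains is exactly to show the cup product lands inside $F\la d\ra(\sOh_{X,\ux})$, and this is where Theorem \ref{thm:cont} enters, applied to the henselian dvr's making up $\sOh_{X,\ux}$ (not to the dvr at $x_{d-1}$ in $X$ itself — the relevant base is a higher local ring over $k$). You should also note the cup-product compatibility \eqref{prop:mod-K-pairing4a} needed to make this factorization precise. The ingredients you name are the right ones, but spelling out this factorization and its annihilation property is essential to close the argument.
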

\begin{proof}
Since $V_{d, X|D}/j_!K^M_{d,U}$ has support in $|D|$, we see
$H^d(X_\Nis, V_{d, X|D}/j_!K^M_{d,U})=0$, whence the surjectivity of the natural map
\eq{prop:mod-K-pairing2}{H^d(X_\Nis, j_!K^M_{d,U})\surj H^d(X_\Nis, V_{d, X|D}).}

To show the existence of the pairing \eqref{prop:mod-K-pairing1}, we first reduce to the case where $X$ is normal.
Let $\nu:\tX\to X$ be the normalization and denote by $\tilde{j}:U\inj \tX$ the induced open immersion. 
We obtain a diagram of solid arrows
\[\xymatrix{
F_\gen(X,D)\ar@{=}[d]\ar[r] &
H^d(X_\Nis, j_!K^M_{d, U})^\vee\ar[d]^{\simeq}&
H^d(X_\Nis, V_{d, X|D})^\vee\ar@{_(->}[l]\\
F_\gen(\tilde{X},\nu^{-1}D)\ar[r]\ar@{-->}@/_1.5pc/[rr]&
H^d(\tX_\Nis, \tilde{j}_!K^M_{d, U})^\vee&
H^d(\tX_\Nis, V_{d, \tX|\nu^{-1}D})^\vee\ar@{_(->}[l]\ar[u]^{(\nu^*)^\vee}
}\]
where $(-)^\vee=\Hom( -,  F(K))$, the top and bottom horizontal maps on the left are 
induced by the pairing \eqref{para:pairing2},
the horizontal injections on the right by \eqref{prop:mod-K-pairing2}, and the vertical isomorphism in the middle by
$j_!\cong \nu_*\tilde{j}_!$, see \eqref{para:ext-zero2}. It is direct to check that the diagram commutes.
Assume the statement is proven for $(\tX, \nu^{-1}D)$. Then  the dashed arrow in the diagram exists, which 
induces  the desired pairing for $(X,D)$. 

  From now on we assume that $X$ is normal and integral. Let $a\in F_\gen(X,D)\subset F(U)$ and denote
by $\chi_a\in \Hom(H^d(X_\Nis, j_!K^M_{d, U}), F(K))$ the image of $a$ under the pairing  \eqref{para:pairing2}.
For $\ux\in\mc(X)$, set (see \eqref{para:hlr} for notation)
\[\chi_{a,\ux}:=\chi_a\circ c_{\ux}\in\Hom(K^M_d(K^h_{X,\ux}), F(K)) ,\]
where $c_\ux$ is the map \eqref{para:hlc2}. 
Note that for all $\ux\in \mc(X)\cup \bigcup_{r=0}^{d-1}\mc_r(X)$,
we have 
\[(V_{d, X|D})^h_{\ux}= (j_!K^M_{d,U})^h_{\ux}= K^M_d(K^h_{X,\ux}).\]
Thus by Proposition \ref{prop:mapNisCoh} the map $\chi_a$  lies in the image of
\[\Hom( H^d(X_\Nis, V_{d, X|D}),F(K))\inj \Hom( H^d(X_\Nis, j_!K^M_{d, U}),  F(K))\]
if  the following composition vanishes for all $\ux\in \mc_d(X)$:
\eq{prop:mod-K-pairing3}{
(V_{d, X|D})^h_{\ux}\xr{\iota_\eta \; \eqref{para:hlr2}} (V_{d, X|D})^h_{\ux(\eta)}=K^M_d(K^h_{X,\ux(\eta)}) \xr{\chi_{a, \ux(\eta)}} F(K), }
where $\eta\in X$ is the generic point.
Let $\ux\in\mc_d(X)$ and set $\uy:=\ux(\eta)$.
By the normality of $X$ and Lemma \ref{lem:hlr}, the ring $\sOh_{X,\ux}$ is a finite product of henselian dvr's of geometric type over $k$ and 
$K^h_{X,\uy}=\Frac(\sOh_{X,\ux})$.
Let $a_{\uy}\in F(K^h_{X,\uy})$ be the restriction of $a$.
By definition the map $\chi_{a,\uy}$ is equal to  the composition
\eq{prop:mod-K-pairing4}{
 K^M_d(K^h_{X,\uy}) \rmapo{\cup a_{\uy}} F\la d\ra(K^h_{X,\uy}) \xr{c_\uy} 
  H^d(X_\Nis,j_! F\la d\ra_U) \rmapo{(f,j)_*} F(K),}
where the first map is induced by the restriction $a_{\uy}$ of $a$ via the pairing \eqref{para:pairing1}
\[ F(K^h_{X,\uy})\otimes K^M_d(K^h_{X,\uy}) \to F\la d\ra(K^h_{X,\uy}).\]
In fact in this description of $\chi_{a,\uy}$, we use that the following diagram commutes
\eq{prop:mod-K-pairing4a}{
\xymatrix{
F(U)\otimes K^M_d(K^h_{X,\uy})\ar[r]^-{\id\otimes c_{\uy}}\ar[d]^{\cup} &
H^0(X_\Nis, j_*F_{U})\otimes H^d(X_{\Nis}, j_!K^M_{d,U})\ar[d]^{\cup}\\
F\la d\ra(K^h_{X,\uy})\ar[r]^-{c_{\uy}} & H^d(X_{\Nis}, j_!F\la d\ra_U) } }
which follows from the compatibility of the cup product with the boundary maps in long exact sequences, e.g., \cite[Corollary 3.7]{Swan}. 
Since $X$ is normal, we find  a regular open subset $V\subset X$  which contains $U$ and $X^{(1)}$;
let $j':V\to X$ be the open immersion. 
By Lemma \ref{lem:pfcs}\ref{lem:pfcs2}, we have a commutative diagram
\[\xymatrix{
F(d)(K^h_{X,\uy})\ar[r]^-{c_{\uy}} \ar[rd]_-{c_{\uy}} & H^d(X_\Nis,j_! F \la d \ra_U)
\ar[r]^-{(f,j)_*}\ar[d] & F(K)\\
&H^d(X_\Nis,j'_! F \la d\ra_V). \ar[ru]_-{(f,j')_*}
\\}\]
Since $X^{(1)}\subset V$ we have $(j'_! F\la d\ra_V)_{\ux}=F\la d \ra(\sOh_{X,\ux})$, 
and hence the lower $c_{\uy}$ annihilates 
$F\la d\ra(\sOh_{X,\ux})$, see  \eqref{para:hlc4}.
Thus to show  the vanishing of \eqref{prop:mod-K-pairing3}, it suffices to show
that the image of the composition
\[F_{\gen}(\sOh_{X,\ux}, I^{-1}_\ux) \otimes V_{d, X|D,\ux}\to F(K^h_{X,\uy}) \otimes K^M_d(K^h_{X,\uy})\xr{\eqref{para:pairing0}}
F\la d  \ra(K^h_{X,\uy})\]
lies in $F\la d\ra(\sOh_{X,\ux})$, where $I_\ux\subset\sOh_{X,\ux}$ denotes the ideal of $D$ around $\ux$ and 
we use the notation from \eqref{para:RSC2}.
Since $F_\gen(\sOh_{X,\ux}, I^{-1}_\ux)=\tF(\sOh_{X,\ux}, I^{-1}_\ux)$ by definition,
the image of the above composition is equal to the image of the following composition
\[(\tF(\sOh_{X,\ux}, I^{-1}_\ux)\otimes \sO_{X|D,\ux}^\times) \otimes K^M_{d-1}(\sOh_{X,\ux})\to F\la1\ra(K^h_{X,\uy})\otimes K^M_{d-1}(\sOh_{X,\ux})
\to F\la d  \ra(K^h_{X,\uy}).\]
Hence  the desired assertion follows from Theorem \ref{thm:cont}.
\end{proof}


\begin{thm}\label{thm:ZNP}
Let $F\in \RSCNis$. Let $X$ be a smooth projective $k$-scheme of pure dimension $d$ and $D$ an effective Cartier divisor on $X$, such that  its support $|D|$ is SNCD.
Denote by $j: U=X\setminus|D|\inj X$ the open immersion.
For $a\in F(U)$, the following conditions are equivalent:
\begin{enumerate}[label=(\roman*)]
\item\label{thm:ZNP1}
$a\in \tF(X,D)$;
\item\label{thm:ZNP2}
$a\in F_{\gen}(X,D)$;
\item\label{thm:ZNP3}
for any function field $K$ over $k$, the map
\[(a_K,-)_{U_K\subset X_K/K} : H^d(X_{K,\Nis},j_!K^M_{d,U_K}) \to F(K)\]
induced by the pairing \eqref{para:pairing2} (with $a_K\in F(U_K)$ the  pullback of $a$)   factors through $H^d(X_{K,\Nis}, V_{d, X_K|D_K})$.
\end{enumerate}
\end{thm}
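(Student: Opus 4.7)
The plan is to prove the cyclic implications (i) $\Rightarrow$ (ii) $\Rightarrow$ (iii) $\Rightarrow$ (i), two of which are essentially formal and one of which invokes the main technical result of the paper.

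The implication (i) $\Rightarrow$ (ii) is immediate from the definitions of $\tF$ and $F_{\gen}$: the restriction of $a \in \tF(X,D)$ to any henselization $\sOh_{X,x} \setminus x$ at a codimension-one point $x \in |D|$ lies in $\tF(\sOh_{X,x}, D_x^h)$ by functoriality of $\tF$. For (ii) $\Rightarrow$ (iii), note that the base-change morphism $X_K \to X$ is flat, so Lemma \ref{lem:Fgen-fp} gives $a_K \in F_{\gen}(X_K, D_K)$, and then Proposition \ref{prop:mod-K-pairing} applied to $(X_K, D_K)$ produces exactly the required factorization through $H^d(X_{K,\Nis}, V_{d, X_K|D_K})$. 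This is the step that absorbs all of the substantial content of Sections 3 and \ref{sec:pfcs}: Proposition \ref{prop:mod-K-pairing} was proved by combining the weak Brylinski--Kato formula (Corollary \ref{cor:BK}) and the continuity Theorem \ref{thm:cont} with the pushforward with compact supports constructed in Section \ref{sec:pfcs}.

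The remaining implication (iii) $\Rightarrow$ (i) is purely formal. Fix a function field $K/k$. By Lemma \ref{lem:pair-cor}, the composition of the pairing $(a_K,-)_{U_K \subset X_K/K}$ with the Gersten-induced map $\theta_{U_K} \colon Z_0(U_K) \to H^d(X_{K,\Nis}, j_!K^M_{d,U_K})$ of \eqref{para:CHM3} agrees, closed point by closed point, with the Yoneda map $Z_0(U_K) = \Ztr(U)(K) \to F(K)$ attached to $a_K$. Combining hypothesis (iii) with the cycle class map of Proposition \ref{prop:cyle-map}, which provides the factorization of $\theta_{U_K}$ through the quotient $\pi \colon Z_0(U_K) \twoheadrightarrow \CH_0(X_K|D_K)$, we deduce that the Yoneda map kills the modulus relations and thus factors through $\CH_0(X_K|D_K) \cong h_0(X,D)(K)$ via \ref{reviewRSC}\ref{intro:h02}. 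Since this holds uniformly in $K$, the characterization of the Nisnevich sheafification of $h_0(X,D)$ by its values on function fields (an application of \cite[Theorem 3.1]{S-purity}) upgrades the collection of pointwise factorizations to a morphism of Nisnevich sheaves with transfers $\Ztr(U) \to h_{0,\Nis}(X,D) \to F$, which by definition gives $a \in \tF(X,D)$.

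The genuine difficulty in the theorem is thus fully localized in Proposition \ref{prop:mod-K-pairing}, which is where the non-$\A^1$-invariant geometry of reciprocity sheaves materially enters; the theorem itself is a careful bookkeeping of the commutative diagrams described above, together with the two citations of \cite[Theorem 3.1]{S-purity} needed to pass from information at generic points to information on the whole scheme.
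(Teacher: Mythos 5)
Your proposal follows the same route and invokes the same key ingredients as the paper's proof: the obvious implication $(i) \Rightarrow (ii)$, the implication $(ii) \Rightarrow (iii)$ via Lemma \ref{lem:Fgen-fp} and Proposition \ref{prop:mod-K-pairing}, and the implication $(iii) \Rightarrow (i)$ via the compatibility of the pairing with the Yoneda map on $K$-points (Lemma \ref{lem:pair-cor}), the cycle class map of Proposition \ref{prop:cyle-map}, the identification $h_0(X,D)(K) \cong \CH_0(X_K|D_K)$, and the injectivity $F(S) \hookrightarrow F(K)$ from \cite[Theorem 3.1]{S-purity}. One small expository difference worth noting: the paper phrases the final step entirely at the level of presheaves with transfers, reducing the factorization of $\Ztr(U) \to F$ through $q: \Ztr(U) \twoheadrightarrow h_0(X,D)$ in $\PST$ to the statement at the generic point using $F(S) \hookrightarrow F(K)$; you instead speak of upgrading pointwise factorizations via the Nisnevich sheafification $h_{0,\Nis}(X,D)$, which is equivalent since $F$ is already a sheaf but is a slightly less direct formulation of the same injectivity argument.
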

\begin{proof}
The implication \ref{thm:ZNP1}$\Rightarrow$\ref{thm:ZNP2} is obvious. 
If $a\in F_\gen(X,D)$, then Lemma \ref{lem:Fgen-fp} implies $a_K\in F_\gen(X_K, D_K)$. 
Thus \ref{thm:ZNP2}$\Rightarrow$\ref{thm:ZNP3} follows from Proposition \ref{prop:mod-K-pairing}.
Assume \ref{thm:ZNP3} holds. We have to show that this implies \ref{thm:ZNP1}, i.e.,
that the Yoneda map $a: \Ztr(U)\to F$ factors through the quotient map
$q: \Ztr(U)\to h_0(X,D)$ in $\PST$, see \ref{para:RSC}.
This means that we have to show that $a(S): \Ztr(U)(S)\to F(S)$ factors through $q(S)$, for any $S\in \Sm$.
If $S$ is connected with function field $K$, then $F(S)\to F(K)$ is injective by \cite[Theorem 3.1]{S-purity}.
Hence it suffices to show the claim in case $S=\Spec K$.
By \cite[Theorem 3.3]{BS19} we have a commutative diagram in which the vertical maps are canonical isomorphisms
\eq{thm:ZNP4}{\xymatrix{
\Ztr(U)(K)\ar[r]^{q(K)}\ar@{=}[d] & h_0(X,D)(K)\ar[d]^{\simeq}\\
Z_0(U_K)\ar[r]^-{\pi}  &\CH_0(X_K|D_K).
}}
where $\CH_0(X_K|D_K)$ is the Chow group of zero-cycles with modulus introduced in the introduction.
Thus we are reduced to show the following.

\begin{claim}\label{thm:ZNP-claim}
$a(K): Z_0(U_K)=\Ztr(U)(K)\to F(K)$ factors via $\pi$ from \eqref{thm:ZNP4}.
\end{claim}
By Lemma \ref{lem:pair-cor} we have a commutative diagram
\[\xymatrix{
F(U) \ar[r]^-{\rho_K}\ar[d]_{\simeq} &\Hom(H^d(X_{K,\Nis},j_!K^M_{d,U_K})),F(K)) \ar[d]_-{\gamma} \\
\Hom_{\PST}(\Ztr(U),F) \ar[r]^-{\beta} & \Hom(\Ztr(U)(K),F(K)) \\}\]
where $\rho_K$ is induced by \eqref{para:pairing2}, $\beta$ is the evaluation map, and $\gamma$ is induced by
\eq{thm:ZNP5}{\Ztr(U)(K)=\underset{z\in (U_K)_{(0)}}{\bigoplus}\Z \xrightarrow[\simeq]{\eqref{para:CHM2}}
\bigoplus_{z\in (U_K)_{(0)}} H^d_z(U_{K,\Nis}, K^M_{d,U})\xr{\sum} 
H^d(X_{K,\Nis},j_!K^M_{d,U_K}).}
By the diagram we have $a(K)=\gamma(\rho_K(a))=(a_K,-)_{U_K\subset X_K/K}\circ\eqref{thm:ZNP5}$.
Moreover, by \eqref{thm:ZNP4} and Proposition \ref{prop:cyle-map}, the composite of \eqref{thm:ZNP5} 
with the natural map
\[H^d(X_{K,\Nis},j_!K^M_{d,U_K}))\to H^d(X_{K,\Nis},V_{d, X_K|D_K})\]
factors as
\[  \Ztr(U)(K)\rmapo{q(K)} h_0(X,D)(K)\simeq\CH_0(X_K|D_K) 
\to H^d(X_{K,\Nis},V_{d, X_K|D_K}).\]
Thus the condition \ref{thm:ZNP3} implies Claim \ref{thm:ZNP-claim}. This completes the proof.
\end{proof}

\begin{exs-rmks}\label{rmk:CFT}
\hspace{0.1cm}
\begin{enumerate}[label=(\arabic*)]
\item Let $F$ be the reciprocity sheaf given by 
\[F(X)=\Hom_{\rm cts}(\pi_1^{\et}(X), \Q/\Z), \quad X\in\Sm,\]
see \cite[8]{RS} for the fact that this is a reciprocity sheaf.
Set $\pi_1^{\et,{\rm ab}}(X,D):=\Hom(\tF(X,D),\Q/\Z)$, cf. \cite[Definition 2.9]{Kerz-Saito}.
Let $(X,D)$ be as in Theorem \ref{thm:ZNP} and assume that $k$ is a finite field.
Then $F(k)\cong \Q/\Z$  and the pairing $(-,-)_{(X,D)/K}$ induces a morphism
\[H^d(X_\Nis, V_{d, X|D})\to \pi_1^{\et, {\rm ab}}(X,D).\]
Taking the inverse limit over multiples of $D$ we obtain the reciprocity map constructed in
\cite[(3.7)]{KS-GCFT}. (Actually in {\em loc. cit.} the case where $k$ is a number field is considered, which requires some
 additional attention to the places at infinity.) Composing with the cycle map
from Proposition \ref{prop:cyle-map} yields the reciprocity map 
\[\CH_0(X|D)\to \pi_1^{\et, {\rm ab}}(X,D),\]
from \cite[Proposition 3.2]{Kerz-Saito}. (But in {\em loc. cit.} only $X\setminus |D|$ is assumed to be smooth.)
Finally we note that in the special case at hand and in view of \cite[Theorem 7.16]{Gupta-Krishna-Reciprocity}, 
Proposition \ref{prop:mod-K-pairing} essentially reproves \cite[Theorem 1.2]{Gupta-Krishna-Reciprocity}.
In fact, in {\em loc. cit.} $X$ is just assumed to be of finite type instead of being projective. However,  
if $X$ is at least quasi-projective we can construct a pairing as in Proposition \ref{prop:mod-K-pairing}
by considering projective compactifications of $(X,D)$. 
Also note that the paring in {\em loc. cit.} is constructed only over a finite field.

\item Assume ${\rm char}(k)=0$ and let $(X,D)$ be as in Theorem \ref{thm:ZNP}.
Denote by ${\rm Conn}^1_{\rm abs}(X)$ 
the group of isomorphism classes of absolute  rank one connections on $X$ relative to $k$.
(``Absolute'' refers to the fact that the connection has values in absolute differentials, 
$\nabla:\sL\to \sL\otimes_{\sO_X}\Omega^1_{X/\Z}$.)
This defines a reciprocity sheaf, cf. \cite[6.10]{RS}. 
Theorem \ref{thm:ZNP-claim} yields a pairing 
\[\widetilde{{\rm Conn}}^{1}_{\rm  abs}(X,D)\otimes H^d(X_\Nis, V_{d,X|D})\to {\rm Conn}^{1}_{\rm abs}(k).\]
By construction and (an absolute version of) \cite[Theorem 6.11]{RS}, this pairing 
is a higher-dimensional version of  the pairing constructed  in \cite[4.]{Bloch-Esnault} in the case $X$ is a curve, i.e., $d=1$.
\item\label{rmk:CFT3}  Assume ${\rm char}(k)=p>0$. 
For $j\ge 1$ and $X\in \Sm$ consider
\[H^j_{p^n}(X):= H^0(X, R^j\e_*\Z/p^n(j-1))= H^0(X, R^1\e_*W_n\Omega^{j-1}_{X,\log}),\]
where $\e: X_{\et}\to X_\Nis$ is the natural change of sites morphism, $W_n\Omega^{j-1}_{X,\log}$ denotes the 
subsheaf of the de Rham-Witt differentials $W_n\Omega^{j-1}_X$ generated locally by log-forms (see \cite{Il}), and 
$\Z/p^n(j-1)$ denotes the mod-$p^n$-motivic complex of weight $j-1$.
The last equality above holds by \cite[Theorem 8.3]{GL}. In fact we have 
\[H^j_{p^n}= {\rm Coker}(W_n\Omega^{j-1}\xr{F-R} W_n\Omega^{j-1}/dV^{n-1}\Omega^{j-1}).\]
Since $\RSC_{\Nis}$ is an abelian category by \cite[Theorem 0.1]{S-purity} it follows that $H^j_{p^n}$ is a reciprocity sheaf.

Let  $(X,D)$ be as in Theorem \ref{thm:ZNP}. 
By \cite{Iz} (see also \cite[Theorem 8.1]{GL}) and the Gersten resolution \cite[Proposition 10(8)]{Kerz}, $K^M_{d, X}$ is $p$-torsion free, hence so is $V_{d, X|D}$. Grothendieck-Nisnevich vanishing yields 
$H^d(X_\Nis, V_{d, X|D})/p^n= H^d(X_\Nis, V_{d, X|D}/p^n)$.
Thus for every function field $K/k$, Theorem \ref{thm:ZNP} yields a pairing
\[\widetilde{H^j_{p^n}}(X_K,D_K)\otimes H^d(X_{K,\Nis}, V_{d, X_K|D_K}/p^n)\to H^j_{p^n}(K),\]
which  induces a pairing, for all $j\ge 1$
\[H^j_{p^n}(U_K)\times \varprojlim_{r} H^d(X_{K,\Nis}, V_{d, X_K|r D_K}/p^n)\to H^j_{p^n}(K).\]
This should be compared to  \cite[Theorem 2]{JSZ} and 
\cite[Theorem 1.1]{Gupta-KrishnaII} where similar pairings are constructed in different cohomological degrees.
Note that the  filtrations on $H^j(U_{\et}, \Z/p^n(j-1))$  used there to define the pairings 
are more ad hoc and  only well-behaved in the colimit.
\end{enumerate}
\end{exs-rmks}




By \cite[Corollary 8.6]{S-purity} we have 
$\tF(X,D)=F_{\gen}(X,D)$, 
for any  $(X,D)\in\uMCor$ with $X$ and $|D|\in \Sm$. 
The following corollary of Theorem \ref{thm:ZNP} generalizes
this result to the case where $|D|$ is only SNCD - at least under a mild extra assumption.
Recall from \cite[Definition 1.8.1]{KMSY1}, that a {\em compactification of a modulus pair} $(X,D)\in\uMCor$
is a proper modulus pair $(\ol{X}, \ol{D}+B)\in \MCor$ with effective Cartier divisors $\ol{D}$ and $B$,
such that there is a dense open immersion $j: X\inj\ol{X}$ with $j(X)=\ol{X}\setminus |B|$ and $D=j^*\ol{D}$.
A compactification of $(X,D)$ always exists.

\begin{cor}\label{cor:cut-by-curve}
Assume $F$ has  level $n\ge 0$ (see Definition \ref{def;RSCNis}) and 
resolutions of singularities hold over $k$ in dimension $\le n$ (see Theorem \ref{intro:thm1}).
Let $(X,D)\in\uMCor$  and  $U=X\setminus |D|$. Let $a\in F(U)$.
The following statements are equivalent:
\begin{enumerate}[label=(\roman*)]
\item\label{cor:cut-by-curve1} $a\in \tF(X,D)$;
\item\label{cor:cut-by-curve2} $h^*a\in \tF(Z, h^*D)$, for all $k$-morphisms 
$h:Z\to X$ with $\dim(Z)\le n$, such that $h^{-1}(U)$ is smooth and non-empty;
\item\label{cor:cut-by-curve3}  $h^*a\in F_\gen(Z, h^*D)$, for all $k$-morphisms 
$h:Z\to X$ with $Z$ smooth quasi-projective, and $\dim(Z)\le n$,  such that $|h^*D|$ is SNCD.
\end{enumerate}
Furthermore, if there is  a compactification $(\ol{X}, \ol{D}+B)$ of $(X,D)$, such that 
$\ol{X}$ is smooth projective and $|\ol{D}+B|$ is SNCD, the above conditions are equivalent to 
\begin{enumerate}[resume*]
\item\label{cor:cut-by-curve4} $a\in F_{\gen}(X,D)$.
\end{enumerate}
\end{cor}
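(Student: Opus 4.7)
The plan is to establish the three-way equivalence (i)$\Rightarrow$(ii)$\Rightarrow$(iii)$\Rightarrow$(i) and, separately, (i)$\Leftrightarrow$(iv) under the nice compactification hypothesis. Throughout, Theorem~\ref{thm:ZNP} will be the central tool, converting generic conductor data into full modulus conditions whenever the ambient pair is smooth projective with SNC boundary.

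The implications (i)$\Rightarrow$(ii)$\Rightarrow$(iii) should be fairly formal. For (i)$\Rightarrow$(ii), by Zariski-localizing on $Z$ one may assume $Z$ quasi-projective; then, given a compactification $(\ol{X},\ol{D}+B)$ realizing the modulus of $a$, choose a projective compactification $\ol{Z}$ of $Z$ with $h$ extending to $\ol{h}:\ol{Z}\to\ol{X}$ and set $E=\ol{h}^{\ast}(\ol{D}+B)$. The functoriality of $\tF$ on $\uMCor$ then yields $h^{\ast}a\in\tF(Z,h^{\ast}D)$, with the admissibility condition \eqref{para:mod1} automatic. For (ii)$\Rightarrow$(iii), the inclusion $\tF(Z,h^{\ast}D)\subseteq F_{\gen}(Z,h^{\ast}D)$ is tautological: any compactification of $(Z,h^{\ast}D)$ realizing the modulus bounds the local conductor at each codim-$1$ point of the normalization of $Z$ above $h^{-1}(D)$ by the assigned multiplicity.

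The principal step is (iii)$\Rightarrow$(i), which I would attack in two moves. First, use resolution of singularities in dimension $\leq n$ together with Theorem~\ref{thm:ZNP} to upgrade (iii) to the statement that $h^{\ast}a\in\tF(Z,h^{\ast}D)$ for every $h:Z\to X$ with $Z$ smooth quasi-projective of dimension $\leq n$ and $|h^{\ast}D|$ SNCD: any such $(Z,h^{\ast}D)$ admits a smooth projective compactification with SNC boundary to which Theorem~\ref{thm:ZNP} applies. Second, invoke the level $n$ hypothesis on $F$ to propagate the upgrade to $a\in\tF(X,D)$. Intuitively, level $n$ asserts that modulus data for $F$ is already captured by sections along $n$-dimensional test schemes. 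Concretely, picking any compactification $(\ol{X},\ol{D}+B)$ of $(X,D)$ (existing by \cite{KMSY1}), the goal is $a\in\tF(\ol{X},\ol{D}+mB)$ for $m$ sufficiently large, and the level $n$ property should reduce this global modulus condition to the $n$-dimensional test data provided by the upgraded (iii).

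Finally, for (iv)$\Leftrightarrow$(i) under the nice compactification hypothesis $(\ol{X},\ol{D}+B)$: (i)$\Rightarrow$(iv) is immediate from restriction to henselizations at codim-$1$ points of $X$ lying in $|D|$. For (iv)$\Rightarrow$(i), the codim-$1$ points of $\ol{X}$ in $|\ol{D}|$ coincide with those of $X$ in $|D|$, so (iv) bounds the motivic conductor there; at generic points of $B$, reciprocity of $F$ forces the motivic conductor of $a$ to be finite, hence $\leq m$ for some $m\geq 0$. Therefore $a\in F_{\gen}(\ol{X},\ol{D}+mB)$, and Theorem~\ref{thm:ZNP} applied to the smooth projective pair $(\ol{X},\ol{D}+mB)$ with SNCD support yields $a\in\tF(\ol{X},\ol{D}+mB)$, which is a compactification of $(X,D)$ in the sense of \cite[Definition~1.8.1]{KMSY1}; hence $a\in\tF(X,D)$. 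The main obstacle is the second move in (iii)$\Rightarrow$(i): extracting a precise reduction from the level $n$ hypothesis that bypasses the need for a nice compactification of $(X,D)$ itself will likely require auxiliary results about the interaction of level $n$ with the modulus sheaf $\tF$ and with the twist functors $\gamma^{\bullet}\langle\bullet\rangle$ from \ref{para:twist}.
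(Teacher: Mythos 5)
Your overall architecture is the same as the paper's, and the (i)$\Rightarrow$(ii)$\Rightarrow$(iii) and (iv)$\Leftrightarrow$(i) parts are essentially correct (for (iv)$\Rightarrow$(i) the paper packages the argument as a chain of equalities $F_{\gen}(X,D)\cap \tF(\ol{X},N(\ol{D}+B))=F_{\gen}(\ol{X},\ol{D}+NB)=\tF(\ol{X},\ol{D}+NB)\subset\tF(X,D)$, but your reasoning amounts to the same thing). The genuine gap is exactly where you locate it: the ``second move'' of (iii)$\Rightarrow$(i). The ingredient you are missing is \cite[Corollary 4.18]{RS}, which is the precise content of the level-$n$ hypothesis: given any compactification $(\ol{X},\ol{D}+B)$ of $(X,D)$ (not necessarily smooth) and $N$ with $a\in\tF(\ol{X},N(\ol{D}+B))$, the assertion $a\in\tF(\ol{X},\ol{D}+NB)$ reduces to checking $\rho^*a\in\tF(\sO_L,\fm_L^{-v_L(\ol{D}+NB)})$ for every henselian dvf $L$ of geometric type with $\mathrm{trdeg}(L/k)\le n$ and every $\rho:\Spec L\to U$. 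That is the bridge from the global modulus to the $\le n$-dimensional test data, and without it the level-$n$ hypothesis cannot be activated; your closing sentence correctly flags this as unresolved.

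There is also a second, smaller flaw in your ``first move.'' You claim that from $h^*a\in F_{\gen}(Z,h^*D)$ one can directly apply Theorem~\ref{thm:ZNP} to a smooth projective SNC compactification $(\ol{Z},E')$ of $(Z,h^*D)$. But $F_{\gen}(Z,h^*D)$ controls the conductor only at codimension-one points of $Z$, not at the new boundary divisors of $\ol{Z}$, so $h^*a\in F_{\gen}(\ol{Z},E')$ is not automatic. The paper fixes this by choosing $\bar h:\ol Z\to\ol X$ with $\ol Z$ smooth projective, $|\bar h^*(\ol D+B)|$ SNC, factoring $\Spec\sO_L\to\ol X$ through $\bar h$, and intersecting $F_{\gen}(Z,h^*D)$ with $\tF(\ol Z,N\bar h^*(\ol D+B))$ (the latter bounding the conductor at the boundary of $\ol Z$), after which the chain of equalities from the (iv)$\Rightarrow$(i) step applied to $\ol Z$ gives $\bar h^*a\in\tF(\ol Z,\bar h^*(\ol D+NB))$; pulling back to $\Spec\sO_L$ then closes the reduction. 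So the paper's proof does \emph{not} first globalize (iii) to (ii) on test schemes and then invoke level $n$; it first invokes level $n$ via \cite[Corollary 4.18]{RS} to localize the problem at dvfs, then applies Chow's lemma and resolution plus the chain argument on a single auxiliary $\ol Z$.
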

\begin{proof}
We assume $D\neq \emptyset$ else there is nothing to prove.
The implication \ref{cor:cut-by-curve1}$\Rightarrow$ \ref{cor:cut-by-curve2} is direct.
The implication \ref{cor:cut-by-curve1}$\Rightarrow$ \ref{cor:cut-by-curve4} always holds,
hence so does \ref{cor:cut-by-curve2}$\Rightarrow$ \ref{cor:cut-by-curve3}.
Assume we have a compactification $(\ol{X}, \ol{D}+B)$ of $(X,D)$ with $\ol{X}$ smooth projective 
and $|\ol{D}+B|$ SNCD.
We may assume that $|\ol{D}|$ and $|B|$ have no common components.
For any $N\ge 1$ we have 
\ml{cor:cut-by-curve5}{F_{\gen}(X,D)\cap \tF(\ol{X}, N(\ol{D}+B))
=F_{\gen}(X,D)\cap F_{\gen}(\ol{X}, N(\ol{D}+B))\\
=F_\gen(\ol{X}, \ol{D}+ NB)=\tF(\ol{X}, \ol{D}+ NB)\subset \tF(X,D),}
where the first and the last equality hold by Theorem \ref{thm:ZNP} and the equality in the middle holds by
definition of $F_{\gen}$.
Since there exits an $N\ge 0$, such that $a\in \tF(\ol{X}, N(\ol{D}+B))$, the equality 
\eqref{cor:cut-by-curve5} proves the implication \ref{cor:cut-by-curve4}$\Rightarrow$ \ref{cor:cut-by-curve1} 
under the extra assumption on $(X,D)$.
It remains to show \ref{cor:cut-by-curve3}$\Rightarrow$ \ref{cor:cut-by-curve1} 
(assuming resolution of singularities in dimension $\le n$).
Let $(\ol{X}, \ol{D}+B)$ be any (not necessarily smooth) compactification of $(X,D)$.
Take $N\ge 0$, such that $a\in \tF(\ol{X}, N(\ol{D}+B))$.
We claim that $\ref{cor:cut-by-curve3}$ implies 
\[a\in \tF(\ol{X}, \ol{D}+NB)\subset \tF(X,D).\]
Indeed, by \cite[Corollary 4.18]{RS} and since $F$ has level $n$, it suffices  to show 
\eq{cor:cut-by-curve6}{\rho^*a\in \tF(\sO_L,\fm_L^{-v_L(\ol{D}+NB)})}
for any $\rho: \Spec L\to U$, where $L$ is  a henselian discrete valuation field of geometric type over $k$,
which has ${\rm trdeg}(L/k)\le n$, and where $v_L(\ol{D}+NH)$ denotes the multiplicity of $\rho^*(\ol{D}+NH)$ 
on $\Spec \sO_L$. (Note that $\rho$ uniquely extends to $\Spec \sO_L\to \ol{X}$.)
Let  $\ol{Z}_1$ be the closure of the image of $\Spec \sO_L$ in $\ol{X}$.
Using the Chow Lemma we find a proper morphism $\bar{h}: \ol{Z}\to\ol{X}$ which maps birational onto 
$\ol{Z}_1$ and such that $\ol{Z}$ is a projective $k$-scheme;
using resolutions of singularities in dimension $\le n$, we can additionally assume
$\ol{Z}$ is smooth  and $|\bar{h}^*(\ol{D}+B)|$ is SNCD. 
Note that by the valuative criterion for properness, $\Spec \sO_L\to \ol{X}$ factors via $\bar{h}$.
Set $Z:=\bar{h}^{-1}(X)$ and denote by $h: Z\to X$ the restriction of $\bar{h}$. 
By \ref{cor:cut-by-curve3} and the above we find
\[\bar{h}^*a\in F_{\gen}(Z, h^*D)\cap\tF(\ol{Z}, N\bar{h}^*(\ol{D}+B))=\tF(\ol{Z},\bar{h}^*(\ol{D}+NB)),\]
where the equality follows from \eqref{cor:cut-by-curve5} applied to $Z$ instead of $X$.
Pulling $\bar{h}^*a$ further back to $\Spec \sO_L$ yields \eqref{cor:cut-by-curve6}.
This completes the proof.
\end{proof}

\begin{exs}\label{exs:ZNP}
We list some examples where we can apply Corollary \ref{cor:cut-by-curve} unconditionally (recall this is the case if $\ch(k)=0$ or if $F$ has level$\leq 3$):
\begin{enumerate}[label=(\arabic*)]
\item Let $G$ be a commutative algebraic group over $k$. By \cite[Theorem 5.2]{RS}  
$G$ has level $1$. By \cite[Theorem 7.20]{RS} we obtain a 
cut-by-curves criterion for a conductor defined by Kato-Russell  (see \cite{Kato-Russell}).
\item (${\rm char}(k)=p>0$) Let $F$ be one of the following two reciprocity sheaves
\[\Sm\ni X\mapsto \Hom_{\rm cts}(\pi_1^{\et}(X), \Q/Z)\quad \text{or}\quad
\Sm\ni X\mapsto \text{Lisse}^1_{\ell}(X),\]
where $\text{Lisse}^1_{\ell}(X)$ denotes the group of isomorphism classes of lisse $\ol{\Q}_{\ell}$-sheaves of rank 1 on $X$.
Then $F$ has level 1 and we obtain a cut-by-curves criterion for the Artin conductor of torsion characters or 
lisse rank 1 sheaves, by \cite[Theorem 8.8, Corollary 8.10]{RS}.
This also follows (by a different method) from the main results in \cite{KatoSwan} and \cite{Matsuda},
see \cite[Corollary 2.8]{Kerz-Saito}.
Also note that the statement for $D=\emptyset$ and $F=\Hom_{\rm cts}(\pi_1^{\et}(-), \Q/Z)$
is a direct consequence of the Zariski-Nagata purity theorem. 
\item (${\rm char}(k)=0$) By \cite[Theorem 6.11]{RS} assigning to $X\in \Sm$ the group of isomorphism classes 
of integrable rank 1 connections defines a reciprocity sheaf of level 1. We obtain a cut-by-curves criterion
for the irregularity. In the tame case (i.e. regular connections at infinity) this was proven by Deligne in 
\cite[II, Proposition 4.4]{Deligne-ED} - by a different method and even for higher rank connections.
In general this is  well-known, but we don't know a reference.
\item (${\rm char}(k)=p>0$)
 Let $G$ be a finite flat algebraic $k$-group ($G$ infinitesimal unipotent, like $\alpha_p$, is the most interesting case).
 By \cite[Theorem 9.12]{RS} the presheaf $X\mapsto H^1_{\rm fppf}(X, G)$ defines a reciprocity sheaf of level 
 at most $2$. Thus we get a cut-by-surfaces criterion for the ramification of $G$-torsors, which we believe to be  new.
 \item (${\rm char}(k)=p>0$) Let $\epsilon:\Sm_{\et}\to \Sm_{\Nis}$ be the change of sites map.
 We expect the reciprocity sheaves $R^n\epsilon_*(\Q/\Z(n-1))$ to be of level $n$, 
 where $\Q/\Z(n-1)$ denotes the \'etale motivic complex of weight $n-1$ with $\Q/\Z$-coefficients so that 
for $n=2$ (resp. $n=3$), 
we get a cut-by-surfaces (resp. cut-by-threefolds) criterion for the ramification of $R^n\epsilon_*(\Q/\Z(n-1))$. 
 We hope to come back to this point somewhere else.
\end{enumerate}
\end{exs}

\section{An application to rational singularities}

In this section we assume ${\rm char}(k)=0$ and give an application of Corollary \ref{cor:cut-by-curve} to the theory of singularities. Recall that by Kempf's criterion a separated finite type $k$-scheme $X$ of pure dimension $d$ 
has {\rm rational singularities} if and only if it is normal, Cohen-Macaulay, and for one/any
resolution of singularities $f: Y\xr{\simeq} X$ we have $f_*\omega_{Y/k}=\omega_{X/k}$,
where $\omega_{Y/k}=\Omega^{d}_{Y/k}$  and $\omega_{X/k}= j_*\Omega^{d}_{X_{\rm sm}/k}$,
with $j: X_{\rm sm}=(\text{smooth locus})\inj X$, are the relative dualizing sheaves  over $k$.
There are various alternative descriptions of rational singularities, all relying on some sort of resolutions
(alterations, Macaulifications, etc...), see \cite{Kovacs} for the state  of the art.
Corollary \ref{cor:cut-by-curve} yields a resolution-free characterization as shown in the next theorem.

\begin{thm}\label{cor:rat-sing}
Assume ${\rm char}(k)=0$.\footnote{The statement is extended to the case $\ch(k)>0$ in \cite[Cor.7.3]{RS-AS}. }
Let $X$ be an affine $k$-scheme of  finite type of pure dimension $d$, 
that is normal and Cohen-Macaulay. Let $D$ be an effective Cartier divisor on $X$, such that $(X,D)\in\uMCor$
(i.e., $D$ contains the singular locus of $X$). 
The following are equivalent:
\begin{enumerate}
\item $X$ has rational singularities.
\item We have  $\widetilde{\Omega^d}(X,D)=\Omega^d_{\gen}(X,D)$.
\end{enumerate}
\end{thm}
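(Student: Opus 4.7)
The inclusion $\widetilde{\Omega^d}(X,D)\subseteq\Omega^d_{\gen}(X,D)$ is automatic from the functoriality of the motivic conductor at codimension-one points of $D$, so the content of the theorem is that the reverse inclusion is equivalent to rational singularities.

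Throughout I would fix a projective resolution $f\colon Y\to X$ which is an isomorphism over $X_{\sm}$ and with $|f^*D|$ a simple normal crossing divisor, together with a smooth projective compactification $\overline Y$ of $Y$ having simple normal crossing boundary (both available since $\ch(k)=0$ and $X$ is affine, hence quasi-projective). Corollary~\ref{cor:cut-by-curve} applied to $(Y,f^*D)$ with compactification $\overline Y$ then yields
\[
\widetilde{\Omega^d}(Y,f^*D)=\Omega^d_{\gen}(Y,f^*D).\qquad(\ast)
\]

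\emph{$(1)\Rightarrow(2)$.} Given $\omega\in\Omega^d_{\gen}(X,D)$, I would verify condition (iii) of Corollary~\ref{cor:cut-by-curve} for $(X,D)$, which is equivalent to (i) $\omega\in\widetilde{\Omega^d}(X,D)$ \emph{without} requiring a smooth projective compactification of $X$ itself. It therefore suffices to check that $h^*\omega\in\Omega^d_{\gen}(Z,h^*D)$ for every $k$-morphism $h\colon Z\to X$ with $Z$ smooth quasi-projective of dimension at most the level of $\Omega^d$ and $|h^*D|$ simple normal crossing. At a codimension-one generic point $\eta\in Z$ of $h^*D$ whose image lies in $X_{\sm}$, the desired bound transfers directly from $\omega$. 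The key case is when $h(\eta)\subseteq X_{\mathrm{sing}}\subseteq D$, where I would use Kempf's criterion $f_*\omega_Y=\omega_X$: writing $\omega$ locally on $X$ in the form $\alpha/s$ with $s$ a local equation of $D$ and $\alpha\in\omega_X$, rational singularities give $f^*\alpha\in\Omega^d(Y)$; after blowing up $Z$ to factor $h$ through $f$ via some $\tilde h\colon\tilde Z\to Y$, the pullback $\tilde h^*(f^*\alpha)\in\Omega^d(\tilde Z)$ descends, by the birationality of the modification and the smoothness of $Z$, to a regular form on $Z$, and the local modulus bound for $h^*\omega$ at $\eta$ follows. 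Corollary~\ref{cor:cut-by-curve} then concludes $\omega\in\widetilde{\Omega^d}(X,D)$.

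\emph{$(2)\Rightarrow(1)$.} By Kempf's criterion it suffices to show $f_*\omega_Y=\omega_X$. Let $\sigma\in\omega_X(X)=\Gamma(X_{\sm},\Omega^d)$. Being regular on $X_{\sm}\supseteq U$, $\sigma$ satisfies the local modulus bounds at the codimension-one points of $D$ trivially, so $\sigma\in\Omega^d_{\gen}(X,D)$. By hypothesis (2), $\sigma\in\widetilde{\Omega^d}(X,D)$; functoriality yields $f^*\sigma\in\widetilde{\Omega^d}(Y,f^*D)$, and $(\ast)$ gives $f^*\sigma\in\Omega^d_{\gen}(Y,f^*D)$. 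In particular $f^*\sigma$ is regular outside the exceptional locus $E$, so the bound at the strict transform of any component of $D$ is trivial. For an exceptional component $E_j$, the local bound $c^{\Omega^d}_{L_{E_j}}(f^*\sigma)\leq v_{E_j}(f^*D)$ combined with the explicit description of $\widetilde{\Omega^d}$ along SNC divisors from \cite[Theorem~6.4]{RS} and the fact that $f(E_j)\subseteq X_{\mathrm{sing}}$ has codimension $\geq 2$ in $X$ is used to exclude $d\log$-like contributions of $f^*\sigma$ along $E_j$: such contributions would be incompatible with $\sigma$ being a regular section pulled back from $X_{\sm}$. This forces $f^*\sigma$ to be regular along each $E_j$, giving $f^*\sigma\in\Omega^d(Y)$ and thus $\sigma\in f_*\omega_Y(X)$.

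\emph{Main obstacle.} In both directions the decisive technical point is the interaction between the \emph{refined} motivic conductor of $\Omega^d$ along the SNC divisor $|f^*D|$ and the geometry of the resolution $f$. In $(1)\Rightarrow(2)$, Kempf's criterion delivers only an a priori \emph{naive} pole bound on arbitrary smooth test schemes $Z\to X$, and one has to upgrade it to the refined local modulus bound in $\Omega^d_{\gen}(Z,h^*D)$. In $(2)\Rightarrow(1)$, the analogous difficulty is upgrading a refined modulus bound on $f^*\sigma$ at an exceptional $E_j$ to genuine regularity, exploiting that $E_j$ lies over $X_{\mathrm{sing}}$ of codimension $\geq 2$ and that $\sigma$ comes from a regular form on $X_{\sm}$; this is precisely the point at which the failure of rational singularities would produce a counterexample.
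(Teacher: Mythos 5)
Your approach is genuinely different from the paper's, and while the overall idea of reducing to Kempf's criterion is right, the direction $(2)\Rightarrow(1)$ has a real gap that the paper's argument is designed to avoid.

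The paper computes both sides of $(2)$ as global sections of explicit coherent sheaves. Choosing $V\subset X$ open with $V\supset (X\setminus|D|)\cup X^{(1)}$ and $D_V$ SNC, Corollary~\ref{cor:cut-by-curve} plus \cite[Corollary 6.8]{RS} gives
\[\Omega^d_{\gen}(X,D)=\Omega^d_{\gen}(V,D_V)=\widetilde{\Omega^d}(V,D_V)=\Gamma\big(X,\omega_{X/k}\otimes\sO_X(D)\big).\]
Simultaneously, since $f$ is an isomorphism over $U$, the graph of $f$ and its transpose give an isomorphism $(X,D)\cong(Y,f^*D)$ in $\uMCor$, so
\[\widetilde{\Omega^d}(X,D)=\widetilde{\Omega^d}(Y,f^*D)=\Gamma\big(Y,\omega_{Y/k}\otimes\sO_Y(f^*D)\big)=\Gamma\big(X,f_*\omega_{Y/k}\otimes\sO_X(D)\big).\]
Then $(2)$ is an equality of global sections of coherent sheaves on the affine $X$, which is equivalent to equality of the sheaves themselves, and one untwists by the line bundle $\sO_X(D)$ to get $f_*\omega_{Y/k}=\omega_{X/k}$, i.e.\ Kempf. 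No section-by-section pole analysis is needed, and no further use of Corollary~\ref{cor:cut-by-curve} for $(X,D)$ itself (which may not be smooth) is required.

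Your $(2)\Rightarrow(1)$ does not close. You take $\sigma\in\omega_X(X)$, show $\sigma\in\Omega^d_\gen(X,D)$, apply $(2)$ to get $\sigma\in\widetilde{\Omega^d}(X,D)=\widetilde{\Omega^d}(Y,f^*D)$, and want $f^*\sigma\in\Omega^d(Y)$. But $\widetilde{\Omega^d}(Y,f^*D)=\Gamma(Y,\omega_{Y/k}\otimes\sO_Y(f^*D))$ permits poles along each exceptional component $E_j$ up to order $v_{E_j}(f^*D)$, and nothing in your argument rules these out: the fact that $\sigma$ is regular on $X_{\sm}$ controls $f^*\sigma$ only off the exceptional locus, and the codimension of $f(E_j)$ in $X$ is irrelevant to whether $f^*\sigma$ acquires a pole along $E_j$ — that is \emph{precisely} what fails when $X$ does not have rational singularities. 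The assertion that such poles ``would be incompatible with $\sigma$ being a regular section pulled back from $X_{\sm}$'' is the very thing that needs to be proven and cannot be true section-by-section; the correct deduction requires the equality of $\sO_X$-modules, not just of one section. Your $(1)\Rightarrow(2)$ is in principle feasible via condition (iii) of Corollary~\ref{cor:cut-by-curve}, but it is considerably more involved than the direct computation above and depends on a careful discrepancy analysis at exceptional points that you only sketch.

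In short: the missing ingredient is the identification $\Omega^d_\gen(X,D)=\Gamma(X,\omega_{X/k}\otimes\sO_X(D))$ and $\widetilde{\Omega^d}(X,D)=\Gamma(X,f_*\omega_{Y/k}\otimes\sO_X(D))$, after which affineness of $X$ upgrades the equality of sections to equality of sheaves and the theorem drops out. Your approach never makes this identification and so cannot invert the pole estimates.
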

\begin{proof}
We find an open immersion $j:V\inj X$, such that 
$V\supset (X\setminus|D|)\cup X^{(1)}$
 and such that $D_V=j^*D$  has SNC support.
We obtain
\[\Omega^d_{\gen}(X,D)\stackrel{(1)}{=} \Omega^d_{\gen} (V,D_V)
\stackrel{(2)}{=} \widetilde{\Omega^d}(V,D_V)
\stackrel{(3)}{=}\Gamma(V,\Omega^d_V(\log D_V)\otimes_{\sO_V}\sO_V(D_V-D_{V,\red}))\]
\[\stackrel{(4)}{=} \Gamma(V,\Omega^d_V\otimes_{\sO_V} j^*\sO_X(D))
=\Gamma(X,j_*(\Omega^d_V\otimes_{\sO_V} j^*\sO_X(D)))
\stackrel{(5)}{=} \Gamma(X,\omega_{X/k}\otimes_{\sO_X} \sO_X(D)),\]
where $(1)$ holds by definition, $(2)$ follows from Corollary \ref{cor:cut-by-curve}  and resolutions of singularities, 
$(3)$ follows from \cite[Corollary 6.8]{RS}, $(4)$  holds since $d=\dim V$, 
and $(5)$ follows from the projection formula and a well-known formula for the dualizing 
sheaf $\omega_{X/k}$
of a normal Cohen-Macaulay $k$-scheme.  
On the other hand, if $f: Y\to X$ is a resolution of singularities such that $f^*D$ has SNC support,
then $(X,D)$ and $(Y, f^*D)$ are isomorphic in $\uMCor$ and we find similarly
\[\widetilde{\Omega^d}(X,D)= \widetilde{\Omega^d}(Y,f^*D)=
\Gamma(Y, \omega_{Y/k}\otimes_{\sO_Y} \sO_{Y}(f^*D)) =
\Gamma(X, f_*\omega_{Y/k}\otimes_{\sO_X}\sO_{X}(D)).\]
Thus 
\[\widetilde{\Omega^d}(X,D)=\Omega^d_{\gen}(X,D)\Longleftrightarrow 
\Gamma(X, f_*(\omega_{Y/k})\otimes_{\sO_X}\sO_{X}(D))= \Gamma(X,\omega_{X/k}\otimes_{\sO_X} \sO_X(D)).\]
Since $X$ is affine, the equality on the right is equivalent to 
\[f_*(\omega_{Y/k})\otimes_{\sO_X}\sO_{X}(D)=\omega_{X/k}\otimes_{\sO_X} \sO_X(D),\]
which is equivalent to $f_*(\omega_{Y/k})=\omega_{X/k}$.
Thus the statement follows from  Kempf's criterion stated in the beginning of this section.
\end{proof}


\providecommand{\bysame}{\leavevmode\hbox to3em{\hrulefill}\thinspace}
\providecommand{\MR}{\relax\ifhmode\unskip\space\fi MR }
\providecommand{\MRhref}[2]{%
  \href{http://www.ams.org/mathscinet-getitem?mr=#1}{#2}
}
\providecommand{\href}[2]{#2}

\end{document}